\numberwithin{equation}{section}
\newtheorem{thm}{Theorem}[section]
\newtheorem{prop}[thm]{Proposition}
\newtheorem{lem}[thm]{Lemma}
\newtheorem{cor}[thm]{Corollary}
\theoremstyle{remark}
\newtheorem{rem}{Remark}[section]
\newtheorem{defn}{Definition}
\newcommand{\laplacian}{\Delta}
\newcommand{\BBB}{\mathbb}
\newcommand{\R}{{\BBB R}}
\newcommand{\Z}{{\BBB Z}}
\newcommand{\C}{{\BBB C}}
\newcommand{\LR}[1]{{\langle {#1} \rangle }}
\newcommand{\lec}{{\ \lesssim \ }}
\newcommand{\gec}{{\ \gtrsim \ }}
\newcommand{\cross}{\times}
\newcommand{\e}{\varepsilon}
\newcommand{\ta}{\tau}
\newcommand{\p}{\partial}
\newcommand{\la}{\lambda}
\newcommand{\de}{\delta}
\newcommand{\om}{\omega}
\newcommand{\na}{\nabla}
\newcommand{\supp}{\operatorname{supp}}
\newcommand{\I}{\infty}
\newcommand{\EQ}[1]{\begin{equation} \begin{split} #1
 \end{split} \end{equation}}
\newcommand{\EQS}[1]{\begin{align} #1 \end{align}}
\newcommand{\EQQS}[1]{\begin{align*} #1 \end{align*}}
\newcommand{\EQQ}[1]{\begin{equation*} \begin{split} #1
 \end{split} \end{equation*}}
\newcommand{\ol}{\overline}
\newcommand{\ds}{\displaystyle}
\newcommand{\sub}{\subset}
\newcommand{\F}{\mathcal{F}}
\newcommand{\1}{{\mathbf 1}}
\newcommand{\ha}{\widehat}
\newcommand{\til}{\tilde}
\title[Well-posedness of the Klein-Gordon-Zakharov system]{Well-posedness
for the Cauchy problem \\
of the Klein-Gordon-Zakharov system \\
in five and more dimensions
}
\author[I.  Kato]{Isao Kato}
\author[S. Kinoshita]{Shinya Kinoshita}
\address[Isao Kato]{Graduate School of Mathematics, Nagoya University,
Chikusa-ku, Nagoya, 464-8602, Japan}
\address[Shinya Kinoshita]{Graduate School of Mathematics, Nagoya University,
Chikusa-ku, Nagoya, 464-8602, Japan}
\email[Isao Kato]{kato.isao@f.mbox.nagoya-u.ac.jp}
\email[Shinya Kinoshita]{m12018b@math.nagoya-u.ac.jp}
\subjclass[2010]{35Q55, 35B40, 35A01, 35A02}
\keywords{scattering, well-posedness, Cauchy problem, low regularity, bilinear estimate, bilinear Strichartz estimate, $U^2, V^2$ type Bourgain spaces}
\begin{document}

\begin{abstract}
We study the Cauchy problem of the Klein-Gordon-Zakharov system in spatial dimension $d \ge 5$ with initial datum 
$(u, \partial_t u, n, \partial_t n)|_{t=0} \in H^{s+1}(\mathbb{R}^d) \times H^s(\mathbb{R}^d) \times \dot{H}^s(\mathbb{R}^d) \times \dot{H}^{s-1}(\mathbb{R}^d)$. The critical value of $s$ is $s_c=d/2-2$. 
By $U^2, V^2$ type spaces, we prove that the small data global well-posedness and scattering hold at $s=s_c$ in $d \ge 5$.   
\end{abstract}
\maketitle
\setcounter{page}{001}


\section{Introduction}
We consider the Cauchy problem of the Klein-Gordon-Zakharov system: 
\EQS{
 \begin{cases}
  (\p_t^2  - \laplacian  + 1)u = -nu, \qquad (t,x) \in [-T,T] \cross \R^d, \\
  (\p_t^2 - c^2 \laplacian )n = \laplacian |u|^2, \qquad \ \, (t,x) \in [-T,T] \cross \R^d, \\
  (u, \p_t u, n, \p_t n)|_{t=0} = (u_0, u_1, n_0, n_1) \\
   \qquad  \qquad  \qquad  \qquad \in H^{s+1}(\R^d) \cross H^s(\R^d) \cross \dot{H}^s(\R^d) \cross \dot{H}^{s-1}(\R^d), 
                                                                                                                                            \label{KGZ}
 \end{cases}  
}
where $u, n$ are real valued functions, $d \ge 5, c > 0$ and $c \neq 1$. \eqref{KGZ} describes the interaction of the Langmuir wave and the ion acoustic wave in a plasma. Physically, $c$ satisfies $0 < c < 1$. When $d=3$, Ozawa, Tsutaya and Tsutsumi ~\cite{OTT2}  proved that \eqref{KGZ} is globally  well-posed in the energy space $H^1(\R^3) \cross L^2(\R^3) \cross L^2(\R^3) \cross \dot{H}^{-1}(\R^3)$. They applied the Fourier restriction norm method to obtain the local well-posedness. Then by the local well-posedness and the energy method, they obtained the global well-posedness. For $d=3$, Guo, Nakanishi and Wang ~\cite{GNW} proved the scattering in the energy class with small, radial initial data. They applied the normal form reduction and the radial Strichartz estimates. If we transform $u_{\pm} := \om_1 u \pm i\p_t u, n_{\pm} := n \pm i(c\om)^{-1}\p_t n, \om_1:=(1-\laplacian )^{1/2}, \om := (-\laplacian )^{1/2}$, then \eqref{KGZ} is equivalent to the following. 
\EQS{
 \begin{cases}
  (i\p_t \mp \om_1) u_{\pm} 
    = \pm (1/4)(n_+ + n_-)(\om_1^{-1}u_+ + \om_1^{-1}u_-), 
               \quad (t,x) \in [-T,T] \cross \R^d, \\
  (i\p_t \mp c\om )n_{\pm} 
    = \pm (4c)^{-1}\om | \om_1^{-1} u_+ + \om_1^{-1} u_-|^2, \qquad \qquad (t,x) \in [-T,T] \cross \R^d, \\
  (u_{\pm}, n_{\pm})|_{t=0} = (u_{\pm 0}, n_{\pm 0}) 
                                \in H^s(\R^d) \cross \dot{H}^s(\R^d). 
                                                                                                                                            \label{KGZ'}
 \end{cases}  
}
Our main result is as follows. 
\begin{thm}  \label{mth}
Let $d \ge 5, s=s_c = d/2-2$ and assume the initial data $(u_{\pm 0},n_{\pm 0}) \in H^s(\R^d) \cross \dot{H}^s(\R^d)$ is small. Then, \eqref{KGZ'} is globally well-posed in $H^s(\R^d) \cross \dot{H}^s(\R^d)$.   
\end{thm}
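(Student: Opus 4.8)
The plan is to prove global well-posedness and scattering by a contraction mapping argument in suitable critical adapted function spaces built from $U^2$ and $V^2$ atoms, exploiting the fact that at the scaling-critical regularity $s_c = d/2-2$ the nonlinearities are controlled by multilinear estimates that close in these spaces. First I would fix the solution spaces: for the Klein-Gordon components $u_\pm$ I would work in the $U^2$-type space $\XW{s}$ adapted to the half-wave propagators $e^{\mp it\om_1}$ restricted to frequency $\lambda$, and for the wave components $n_\pm$ in the space $\XWL{s}$ (or $\XS{s}$) adapted to $e^{\mp itc\om}$, each dyadically localized and summed with the $\dot{H}^s$ and $H^s$ weights. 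The norm of the solution map is then estimated via the Duhamel formula: since these $U^2/V^2$ spaces embed into $C(\R; H^s)$ and are stable under the Duhamel integral with a gain, the entire problem reduces to proving two trilinear/bilinear estimates for the right-hand sides of \eqref{KGZ'}.

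\medskip

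The two estimates I would need to establish are, schematically,
\EQ{
 \| \om_1^{-1}(n_\pm \, \om_1^{-1} u_\pm) \|_{\XW{s}} &\lec \|n_\pm\|_{\XWL{s}} \|u_\pm\|_{\XW{s}}, \\
 \| \om( \om_1^{-1}u_\pm \, \ol{\om_1^{-1} u_\pm}) \|_{\XWL{s}} &\lec \|u_\pm\|_{\XW{s}} \|u_\pm\|_{\XW{s}},
}
with the appropriate dyadic summations over output and input frequencies. By duality and the atomic structure of $U^2$ these reduce to bilinear estimates for products of free solutions, which are then handled by the bilinear Strichartz estimates. The crucial point is the transversality between the Klein-Gordon characteristic surface $\{\ta = \pm\LR{\x}\}$ and the wave cone $\{\ta = \pm c|\x|\}$: because $c \neq 1$, these surfaces intersect transversally, which yields a bilinear interaction gain strong enough to absorb the derivative loss coming from the $\om$ factor in the wave equation and the summation over dyadic pieces. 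I would carry out the frequency analysis by splitting into the regimes of high-high, high-low, and low-high interactions and tracking the modulation localizations, using the $V^2 \hookrightarrow U^r$ embeddings to convert $L^2$-based bilinear Strichartz estimates into $U^2/V^2$ estimates.

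\medskip

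The main obstacle I expect is the high-high to low frequency interaction in the wave equation, where the output frequency of $n_\pm$ is much smaller than the two (comparable, large) input frequencies of $u_\pm$: here the homogeneous $\dot{H}^{s-1}$ weight on $n_\pm$ combined with the $\om$ derivative creates a potential low-frequency divergence, and one must verify that the bilinear Strichartz gain from transversality, together with the summability furnished by the $U^2/V^2$ structure, is exactly enough to close the estimate at the endpoint $s = s_c$ in dimension $d \ge 5$. The dimensional restriction $d \ge 5$ should enter precisely at this point, since the Strichartz exponents and the resulting summation gain degrade as $d$ decreases, and $d \ge 5$ is what makes the critical endpoint accessible. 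Once both multilinear estimates are established with a small constant coming from the smallness of the data, the contraction mapping principle yields a unique global solution, and the $U^2/V^2$ framework automatically gives scattering as $t \to \pm\infty$ by the existence of limits of $e^{\pm it\om_1}u_\pm(t)$ and $e^{\pm itc\om}n_\pm(t)$ in $H^s$ and $\dot{H}^s$ respectively.
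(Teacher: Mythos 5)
Your proposal follows essentially the same route as the paper: reduction via Duhamel to two bilinear estimates in $U^2/V^2$-based critical spaces adapted to $K_{\pm}$ and $W_{\pm c}$, proved by dyadic frequency and modulation decomposition, with the decisive new ingredient being a bilinear Strichartz estimate whose transversality gain comes from $c\neq 1$, and with scattering read off from the $V^2$ structure exactly as you describe. The one minor discrepancy is your identification of the hard case: the high-high-to-low output interaction for the wave equation closes by linear Strichartz estimates alone (the factor $N_3^{s_c}$ with $s_c>0$ for $d\ge 5$ sums trivially), whereas the genuinely delicate regime is the one where a single Klein-Gordon input frequency sits far below the two comparable high frequencies, which the paper handles by combining the modulation lower bound of Lemma \ref{Recovery} with the bilinear Strichartz estimate of Proposition \ref{bi-str}.
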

\begin{cor} \label{Scatter}
The solution obtained in Theorem \ref{mth} scatters as $t \to \pm \I$. 
\end{cor}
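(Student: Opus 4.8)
The plan is to obtain both the global solution of Theorem \ref{mth} and the scattering of Corollary \ref{Scatter} from a single fixed-point construction in scaling-critical function spaces of $U^2, V^2$ type adapted to the two propagators in \eqref{KGZ'}, reading the scattering off at the end from the $U^2$ membership of the solution. I would measure the Klein-Gordon components in $\|u_{\pm}\|_{X^s_{\pm}} := \|\LR{D}^s u_{\pm}\|_{U^2_{\pm\om_1}}$ and the wave components in $\|n_{\pm}\|_{Y^s_{\pm}} := \|\om^s n_{\pm}\|_{U^2_{\pm c\om}}$, where $U^2_{\pm\om_1}$ and $U^2_{\pm c\om}$ are the atomic spaces built on the free flows $e^{\mp it\om_1}$ and $e^{\mp it c\om}$. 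Writing \eqref{KGZ'} in Duhamel form and invoking the linear estimate $\|\int_0^t e^{\mp i(t-s)\om_1}F(s)\,ds\|_{U^2_{\pm\om_1}} \lec \|F\|_{DU^2_{\pm\om_1}}$ together with its wave analogue, the existence of a global solution, its uniqueness in a small ball, and its Lipschitz dependence on the data all reduce to two multilinear estimates bounding the nonlinearities in the appropriate $DU^2$ norms. Smallness of the initial data then confines the iterates to a small ball on which the Duhamel map contracts.

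The two estimates to be proved are, with $s = s_c$,
\EQQ{
 \big\| \LR{D}^s\big[ (n_+ + n_-)(\om_1^{-1}u_+ + \om_1^{-1}u_-) \big] \big\|_{DU^2_{\pm\om_1}} \lec \Big( \sum_{\pm}\|n_{\pm}\|_{Y^s_{\pm}} \Big)\Big( \sum_{\pm}\|u_{\pm}\|_{X^s_{\pm}} \Big)
}
for the Klein-Gordon nonlinearity, and
\EQQ{
 \big\| \om^{s}\big( \om\,| \om_1^{-1}u_+ + \om_1^{-1}u_- |^2 \big) \big\|_{DU^2_{\pm c\om}} \lec \Big( \sum_{\pm}\|u_{\pm}\|_{X^s_{\pm}} \Big)^2
}
for the wave nonlinearity. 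By the duality $\|F\|_{DU^2} \sim \sup\{ |\iint F\,\ol{v}\,dx\,dt| : \|v\|_{V^2} \le 1 \}$ and a Littlewood-Paley decomposition, each estimate becomes a trilinear spacetime integral over dyadic frequency pieces; the transfer principle then replaces the $U^2, V^2$ functions by genuine free solutions, so everything rests on bilinear $L^2_{t,x}$ bounds for products of two free waves --- one Klein-Gordon and one wave factor in the first estimate, two Klein-Gordon factors in the second.

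The decisive input is a family of bilinear Strichartz estimates coming from the transversality of the characteristic surfaces. For the wave nonlinearity the product is of two Klein-Gordon free solutions and the interaction is governed by the resonance function $H(\x_1,\x_2) = \pm\LR{\x_1} \pm \LR{\x_2} \mp c|\x_1 + \x_2|$; for the Klein-Gordon nonlinearity it is a product of one wave and one Klein-Gordon solution, governed by the difference of the group velocities $\nabla(c|\cdot|)$ and $\nabla\LR{\cdot}$. In both cases the hypothesis $c \neq 1$ is \emph{essential}: it keeps the wave cone $\tau = \pm c|\x|$ and the Klein-Gordon hyperboloid $\tau = \pm\LR{\x}$ uniformly transversal at high frequency, where $\LR{\x} \approx |\x|$, so the relevant Jacobian stays bounded below and one gains a bilinear smoothing proportional to a power of the smaller frequency. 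This gain is what must compensate the full derivative $\om$ in the wave nonlinearity, which by Sobolev embedding alone would cost one derivative too many; in dimension $d \ge 5$ the bilinear smoothing is strong enough to recover exactly that derivative together with the scaling-critical weights, which is why the iteration closes at $s = s_c$ and not only above it.

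The main obstacle is therefore the sharp frequency bookkeeping at the critical regularity, where no Sobolev slack remains. I expect the delicate configurations to be those in which the full derivative $\om$ lands on a high output frequency in the wave nonlinearity --- so that the entire derivative loss must be recovered from the bilinear gain --- together with any near-resonant region $\{ |H| \text{ small}\}$ where the transversality estimate is weakest; in each, the bilinear Strichartz bound must be summed over dyadic modulation and frequency scales, using the $V^2$-boundedness of sharp time cutoffs to avoid logarithmic divergences. Once the two multilinear estimates are established, Corollary \ref{Scatter} is immediate: the solution components lie in $U^2$ of their respective flows, so $e^{\pm it\om_1}u_{\pm}(t)$ and $e^{\pm it c\om}n_{\pm}(t)$ converge in $H^s$ and $\Hd^s$ as $t \to \pm\I$, which is precisely scattering to a free solution.
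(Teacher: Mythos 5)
Your proposal is correct and follows essentially the same route as the paper: the solution is built globally by contraction in $U^2,V^2$-type critical spaces adapted to $K_{\pm}$ and $W_{\pm c}$ (with the bilinear Strichartz estimate exploiting $c\neq 1$ as the key input), and scattering is then read off from the fact that a uniform-in-$T$ bound in these spaces forces $K_{\pm}(-t)u_{\pm}(t)$ and $W_{\pm c}(-t)n_{\pm}(t)$ to have $L^2$-limits as $t\to\pm\I$, exactly as in the paper's proof of Proposition \ref{main_prop2}. The only cosmetic difference is that the paper runs the iteration in the $V^2$-based spaces $Y^s_{K_{\pm}},\dot{Y}^s_{W_{\pm c}}$ and extracts the limit via the $2$-variation bound of Proposition \ref{embedding}(i), while you phrase it through $U^2$ membership; both yield the same conclusion.
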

For more precise statement of Theorem \ref{mth} and Corollary \ref{Scatter}, see Propositions \ref{main_prop1}, \ref{main_prop2}. ~\cite{Ka} considered \eqref{KGZ'} for $d \ge 4, 0 <c$ and $c \neq 1$. ~\cite{Ka} applied $U^2, V^2$ type spaces and obtained \eqref{KGZ'} is globally well-posed in $H^{s_c}(\R^d) \cross \dot{H}^{s_c}(\R^d)$ if the initial data is small and radial. $U^2, V^2$ type spaces were introduced by Koch and Tataru~\cite{KT1}. These spaces works well as one consider well-posedness at the critical space~\cite{HHK},~\cite{Hir},~\cite{IKO},~\cite{KaT}. Theorem \ref{mth} is proved by the Banach fixed point theorem. The key is the bilinear estimate (Proposition \ref{BE}). For $d \ge 5$, it seemed difficult to prove Proposition \ref{BE} only by applying $U^2, V^2$ type spaces, the modulation estimate (Proposition \ref{modulation}, Lemma \ref{Recovery}) and the Strichartz type estimates (Proposition \ref{Str}) for a nonlinear interaction~\cite{Ka}.  In the present paper, to overcome the difficulty, we derive the bilinear Strichartz estimate for the nonlinear interaction and then we are able to prove Proposition \ref{BE}. See Proposition \ref{bi-str} for the bilinear Strichartz estimate. $c \neq 1$ plays an important role in the proof of the bilinear Strichartz estimate as well as in the proof of Lemma \ref{Recovery}.

\if0
By the Duhamel principle, we consider the integral equation (4.2) corresponding to \eqref{KGZ'}. 
For the integral equation (4.2), the theorem is proved by the Banach fixed point theorem. 
The key is the bilinear estimates (Proposition 3.1). 
We apply $U^2, V^2$ type spaces, which were introduced by Koch and Tataru. 
For some kinds of dispersive equation, these space works well when we prove well-posedness at the scaling critical space. 
The critical regularity of \eqref{KGZ'} is $s_c=d/2-2$. 
To prove the bilinear estimate for Theorem \ref{mth}, we need to make use of $\om_1^{-1}$ in the nonlinearities. 
We observe the first equation of \eqref{KGZ'}. 
We regard the nonlinearity as $n_{\pm}(\om_1^{-1}u_{\pm})$. 
We have 
\EQQS{
 \F_{t,x}[n_{\pm} (\om_1^{-1}u_{\pm})](\ta , \xi ) = \iint \F_{t,x}[n_{\pm}](\ta' , \xi')\LR{\xi - \xi'}^{-1} 
                                                                  \F_{t,x}[u_{\pm}](\ta - \ta', \xi - \xi')d\ta' d\xi'.
}  
We consider the following cases. 
The case $|\xi'| \lec |\xi - \xi'|$ and the case $|\xi'| \gg |\xi-\xi'|$. 
For the case $|\xi'| \lec |\xi - \xi'|$, we have $\LR{\xi - \xi'}^{-1} \lec \LR{\xi'}^{-1/2} \LR{\xi - \xi'}^{-1/2}$. 
Hence, when $d \ge 5$, we can obtain the bilinear estimates at the critical space only by applying the Strichartz estimates. 
For the case $|\xi'| \gg |\xi-\xi'|$, we need to gain a half derivative. 
We can gain a half derivative when we apply $U^2, V^2$ type spaces and the following inequality. 
\EQS{
 M' := \max\bigl{\{} \bigl|\ta' \pm c|\xi'| \bigr|, | \ta - \ta' \pm \LR{\xi - \xi'}|, | \ta \pm \LR{\xi}| \bigr{\}} \gec |\xi'|. 
                         \label{rec} 
} 
Here, $\ta' \pm c|\xi'|$ (resp. $\ta - \ta' \pm \LR{\xi - \xi'}, \ta \pm \LR{\xi}$) denote the symbol of the linear part for 
the wave equation (resp. Klein-Gordon equation). 
There are three cases in \eqref{rec}, namely the cases $(a)\, M' = \bigl|\ta' \pm c|\xi'| \bigr|, (b)\, M' = |\ta - \ta' \pm \LR{\xi - \xi'}|, (c)\, M' = |\ta \pm \LR{\xi}|$. 
For the case $(a)$, we apply \eqref{rec} for $n_{\pm}$ and apply the Strichartz estimates for $\om_1^{-1}u_{\pm}$. 
Then we can obtain the bilinear estimate at the critical space. 
The same result also holds for $(c)$ by the duality argument. 
Whereas for $(b)$, when we apply \eqref{rec} for $\om_1^{-1}u_{\pm}$ and the bilinear Strichartz estimates for the other part, we can obtain the bilinear estimate at the critical space.    
For more precise statement for the bilinear Strichartz estimate, see Lemma \ref{bi-str}.  
~\cite{Ka} did not apply the bilinear Strichartz estimate but the Strichartz estimate for $(b)$, hence we had to impose more regularity, that is, $s = (d^2-3d-2)/2(d+1) = s_c + 1/(d+1)$.      
\fi
 
In Section $2$, we prepare some notations and lemmas with respect to $U^p, V^p$, in Section $3$, we prove the bilinear estimates and in Section $4$, we prove the main result.  

\section*{Acknowledgement}
The authors appreciate Professor M. Sugimoto and Professor K. Tsugawa for giving many useful advices to the authors. The second author is supported by Grant-in-Aid for JSPS Research Fellow 16J11453.

\section{Notations and Preliminary Lemmas}
In this section, we prepare some lemmas, propositions and notations to prove the main theorem. 
$A\lec B$ means that there exists $C>0$ such that $A \le CB.$ 
Also, $A\sim B$ means $A\lec B$ and $B\lec A.$  
Let $u=u(t,x).\ \F_t u,\ \F_x u$ denote the Fourier transform of $u$ in time, space, respectively. 
$\F_{t,\, x} u = \F u = \ha{u}$ denotes the Fourier transform of $u$ in space and time.    
Let $\mathcal{Z}$ be the set of finite partitions $-\I = t_0<t_1<...<t_K = \I$ and let $\mathcal{Z}_0$ 
be the set of finite partitions $-\I<t_0<t_1<...<t_K \le \I$. 
\begin{defn}
Let $1\le p< \I$.   
For $\{t_k\}_{k=0}^K \in \mathcal{Z}$ and $\{ \phi_k\}_{k=0}^{K-1}\subset L^2_x$ with   
$\sum_{k=0}^{K-1} \|\phi_k \|_{L^2_x}^p=1$, we call the function $a : \R \to L^2_x$ given by 
\EQQS{
 a=\sum_{k=1}^K \1_{[t_{k-1},\, t_k)}\phi_{k-1} 
}
a $U^p$-atom. Furthermore, we define the atomic space 
\EQQS{
 U^p:=\biggl{\{} u=\sum_{j=1}^{\I}\la_j a_j \, \Bigl| \, a_j : U^p \text{-atom} , \la_j \in \C \ such\ that\ \sum_{j=1}^{\I}|
          \la_j|< \I \biggr{\}}
}
with norm 
\EQQS{
 \| u\|_{U^p}:=\inf \biggl{\{} \sum_{j=1}^{\I}|\la_j| \, \Bigl| \, u=\sum_{j=1}^{\I}\la_j a_j, \la_j\in \C, a_j : U^p \text{-atom}
                   \biggr{\}}.
} 
\end{defn} 
\begin{prop}
Let $1\le p<q<\I.$ \\
(i) $U^p$ is a Banach space. \\
(ii) The embeddings $U^p\subset U^q\subset L^{\I}_t(\R;L^2_x)$ are continuous. \\
(iii) For $u\in U^p$, it holds that $\lim_{t\to t_{0}+}\|u(t)-u(t_{0})\|_{L^2_x}=0,$ i.e. every $u\in U^p$ is right-continuous. \\
(iv) The closed subspace $U^p_c$ of all continuous functions in $U^p$ is a Banach space.
\end{prop}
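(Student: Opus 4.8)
The plan is to follow the now-standard treatment of atomic function spaces due to Koch and Tataru, deriving all four assertions from a single a priori estimate. The cornerstone is the observation that every $U^p$-atom is bounded by $1$ in $L^\I_t(\R;L^2_x)$: if $a=\sum_{k=1}^K \1_{[t_{k-1},t_k)}\phi_{k-1}$ with $\sum_{k=0}^{K-1}\|\phi_k\|_{L^2_x}^p=1$, then for each fixed $t$ we have $\|a(t)\|_{L^2_x}=\|\phi_{k-1}\|_{L^2_x}\le \big(\sum_k\|\phi_k\|_{L^2_x}^p\big)^{1/p}=1$. Consequently, for any representation $u=\sum_j \la_j a_j$ the series converges absolutely in $L^\I_t(\R;L^2_x)$, its limit is independent of the chosen representation, and $\|u\|_{L^\I_t L^2_x}\le\sum_j|\la_j|$. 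Taking the infimum over representations gives $\|u\|_{L^\I_t L^2_x}\le\|u\|_{U^p}$; the identical argument with $p$ replaced by $q$ yields the second embedding $U^q\subset L^\I_t(\R;L^2_x)$ in (ii).

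With this in hand I would first verify that $\|\cdot\|_{U^p}$ is genuinely a norm: homogeneity and the triangle inequality are immediate from the infimum definition (rescale, respectively concatenate, atomic decompositions), while positive definiteness is exactly the embedding just proved, since $\|u\|_{U^p}=0$ forces $\|u\|_{L^\I_t L^2_x}=0$. For the first embedding in (ii), the key point is that the normalization $\sum_k\|\phi_k\|_{L^2_x}^p=1$ forces each $\|\phi_k\|_{L^2_x}\le 1$, so for $p<q$ one has $\sum_k\|\phi_k\|_{L^2_x}^q\le\sum_k\|\phi_k\|_{L^2_x}^p=1$. Hence a $U^p$-atom $a$ equals $c\,b$ with $c=(\sum_k\|\phi_k\|_{L^2_x}^q)^{1/q}\le 1$ and $b$ a genuine $U^q$-atom, giving $\|a\|_{U^q}\le 1$ and therefore $\|u\|_{U^q}\le\|u\|_{U^p}$ after passing to the infimum.

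For completeness (i) — the one step requiring care — I would use the standard criterion that a normed space is complete once every absolutely summable series converges. Given $\sum_m\|u_m\|_{U^p}<\I$, choose for each $m$ a near-optimal decomposition $u_m=\sum_j\la_{m,j}a_{m,j}$ with $\sum_j|\la_{m,j}|\le\|u_m\|_{U^p}+2^{-m}$. The doubly-indexed family then satisfies $\sum_{m,j}|\la_{m,j}|\le\sum_m(\|u_m\|_{U^p}+2^{-m})<\I$, so after relabeling it is a legitimate $U^p$-representation of some $u$, the series converging in $L^\I_t L^2_x$ by the atom bound above; thus $u\in U^p$. Truncating the outer sum shows $\|u-\sum_{m\le M}u_m\|_{U^p}\le\sum_{m>M}\sum_j|\la_{m,j}|\to 0$, so $\sum_m u_m=u$ in $U^p$.

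Finally, (iii) and (iv) follow from right-continuity of atoms together with the $L^\I_t L^2_x$ embedding. Each atom is constant on every left-closed interval $[t_{k-1},t_k)$, hence right-continuous as an $L^2_x$-valued function; for $u=\sum_j\la_j a_j$ an $\eps/3$ argument splitting off a tail small in $L^\I_t L^2_x$ shows that $u$ inherits right-continuity, proving (iii). For (iv), if $u_n\to u$ in $U^p$ with each $u_n$ continuous, then $u_n\to u$ in $L^\I_t(\R;L^2_x)$, i.e. uniformly in $t$, so the limit $u$ is continuous; thus the continuous functions form a closed subspace of the Banach space $U^p$ and are themselves a Banach space. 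The only genuinely delicate point is the bookkeeping in the completeness argument; everything else reduces to the atom bound and the two elementary inequalities for the $\ell^p$ normalizations.
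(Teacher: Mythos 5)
Your proof is correct, and it is essentially the standard argument: the paper itself gives no proof of this proposition but cites Hadac--Herr--Koch (Proposition 2.2 there), and your chain of reasoning --- the uniform bound $\|a\|_{L^\I_t L^2_x}\le 1$ for atoms, the $\ell^p\subset\ell^q$ comparison of atom normalizations for the embedding $U^p\subset U^q$, completeness via absolutely summable series with near-optimal decompositions, and right-continuity/closedness inherited from atoms through the $L^\I_t L^2_x$ bound --- is exactly the one used there. No gaps.
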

The above proposition is in ~\cite{HHK} (Proposition 2.2).  
\begin{defn}
Let $1\le p<\I$. We define $V^p$ as the normed space of all functions $v:\R\to L^2_x$ such that 
$\lim_{t\to \pm \I}v(t)$ exist and for which the norm 
\EQQS{
\| v\|_{V^p}:=\sup_{\{ t_k\}_{k=0}^K\in \mathcal{Z}}\Bigl(\sum_{k=1}^K\| v(t_k)-v(t_{k-1})\|_{L^2_x}^p\Bigr)^{1/p}  
} 
is finite, where we use the convention that $v(-\I):=\lim_{t\to -\I}v(t)$ and $v(\I):=0.$ 
Likewise, let $V_-^p$ denote the closed subspace of all $v \in V^p$ with $\lim_{t\to -\I}v(t)=0.$ 
\end{defn}
The definitions of $V^p$ and $V^p_-$, see also ~\cite{HHK2}.
\begin{prop} \label{embedding}
Let $1\le p<q<\I$. \\
(i) Let $v:\R \to L^2_x$ be such that 
\EQQS{
 \| v\|_{V^p_0}:= \sup_{ \{ t_k\}_{k=0}^K\in \mathcal{Z}_0}\Bigl( \sum_{k=1}^K\| v(t_{k})-v(t_{k-1})\|_{L^2_x}^p\Bigr)^{1/p}
}
is finite. Then, it follows that $v(t_0^+):=\lim_{t\to t_0+} v(t)$ exists for all $t_0\in [-\I,\I)$ and 
$v(t_0^{-}):=\lim_{t\to t_0-} v(t)$ exists for all $t_0\in (-\I,\I]$ and moreover, 
\EQQS{
 \| v\|_{V^p}=\| v\|_{V^p_0}.
}
(ii) We define the closed subspace $V^p_{rc}\, (V^p_{-,\, rc})$ of all right-continuous $V^p$ functions ($V^p_-$ functions). 
The spaces $V^p,\ V^p_{rc},\ V^p_-$ and $V^p_{-,\, rc}$ are Banach spaces. \\
(iii) The embeddings $U^p\subset V^p_{-,\, rc}\subset U^q$ are continuous. \\
(iv) The embeddings $V^p\subset V^q$ and $V^p_-\subset V^q_-$ are continuous. 
\end{prop}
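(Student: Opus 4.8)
The plan is to verify the four assertions in increasing order of difficulty, treating (iv) and (i) as warm-ups, then (ii), and reserving the embedding $V^p_{-,\, rc}\subset U^q$ in (iii) for last, since that is the crux. Part (iv) is purely elementary: for each fixed partition $\{t_k\}\in\mathcal{Z}$ the numbers $a_k:=\|v(t_k)-v(t_{k-1})\|_{L^2_x}$ form a finite sequence, and since $p<q$ the $\ell^p\hookrightarrow\ell^q$ inequality gives $(\sum_k a_k^q)^{1/q}\le(\sum_k a_k^p)^{1/p}$. Taking the supremum over $\{t_k\}$ yields $\|v\|_{V^q}\le\|v\|_{V^p}$ at once, and the same bound restricts to the subspaces with $\lim_{t\to-\I}v=0$, giving $V^p_-\subset V^q_-$.

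For (i), the existence of the one-sided limits $v(t_0^\pm)$ is the classical fact that a function of finite $p$-variation into a Banach space has one-sided limits everywhere: if, say, $\lim_{t\to t_0+}v(t)$ failed to exist, the Cauchy criterion would produce $\varepsilon>0$ and arbitrarily many disjoint pairs $s_j<s_j'$ accumulating at $t_0^+$ with $\|v(s_j')-v(s_j)\|_{L^2_x}\ge\varepsilon$, forcing $\|v\|_{V^p_0}^p\ge N\varepsilon^p$ for every $N$ and contradicting finiteness; left limits are handled identically. The equality $\|v\|_{V^p}=\|v\|_{V^p_0}$ is then an endpoint-approximation argument. The inequality $\ge$ follows because prepending $t=-\I$ (with the convention $v(-\I)=\lim_{t\to-\I}v$) and, if necessary, appending $t=\I$ (with $v(\I)=0$) to any $\mathcal{Z}_0$-partition only adds the nonnegative terms $\|v(t_0)-v(-\I)\|_{L^2_x}^p$ and $\|v(\I)-v(t_K)\|_{L^2_x}^p$ to the variation sum; conversely $\le$ follows because in any $\mathcal{Z}$-partition the value $v(-\I)$ may be replaced, up to arbitrarily small error, by $v(t_{-1})$ for $t_{-1}$ sufficiently negative, turning it into an admissible $\mathcal{Z}_0$-partition.

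Part (ii) rests on the continuous embedding $V^p\subset L^\I_t(\R;L^2_x)$, which is immediate from the two-point partition, since $\|v(t)\|_{L^2_x}=\|v(t)-v(\I)\|_{L^2_x}\le\|v\|_{V^p}$. Hence a $V^p$-Cauchy sequence is uniformly Cauchy in $L^2_x$, converges pointwise to some $v$, and a standard lower-semicontinuity argument—evaluating each fixed finite partition sum as a limit of the corresponding sums for $v_n$—shows $v\in V^p$ and $v_n\to v$ in $V^p$; thus $V^p$ is Banach. The defining conditions of the subspaces, namely right-continuity in $L^2_x$ and $\lim_{t\to-\I}v=0$, are preserved under uniform $L^2_x$-convergence, hence under $V^p$-convergence, so $V^p_{rc}$, $V^p_-$ and $V^p_{-,\,rc}$ are closed subspaces of $V^p$ and therefore Banach.

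Finally, for (iii) the first inclusion is soft: every $U^p$-atom $a=\sum_k\1_{[t_{k-1},t_k)}\phi_{k-1}$ satisfies $\|a\|_{V^p}\lec(\sum_k\|\phi_{k-1}\|_{L^2_x}^p)^{1/p}\lec 1$ by the triangle inequality and is right-continuous with the required behavior at $-\I$; passing to the atomic decomposition and the infimum defining $\|\cdot\|_{U^p}$ gives $\|u\|_{V^p_{-,\,rc}}\lec\|u\|_{U^p}$. The second inclusion $V^p_{-,\,rc}\subset U^q$ is the \emph{main obstacle} and the only place where $p<q$ is genuinely used. The idea is to decompose a given $v$ into a sum of multiples of $U^q$-atoms by a dyadic stopping-time construction: for each $j$ one inductively selects the partition points at which the increments of $v$ first exceed the threshold $2^{-j}$, groups the resulting pieces into a function $v_j$ that is a bounded multiple of a $U^q$-atom, and writes $v=\sum_j v_j$. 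The number of threshold-$2^{-j}$ increments is $\lec 2^{jp}\|v\|_{V^p}^p$ by the definition of the $V^p$-norm, so each $v_j$ contributes $\lec 2^{-j}(2^{jp})^{1/q}$ to the $U^q$-norm, and the resulting geometric series $\sum_j 2^{j(p/q-1)}$ converges precisely because $p/q<1$. This yields $\|v\|_{U^q}\lec\|v\|_{V^p}$ and closes the chain of embeddings. The bookkeeping of the stopping-time decomposition—checking that each $v_j$ is genuinely an admissible atom and that the pointwise sum reconstructs $v$—is the delicate point; everything else is routine.
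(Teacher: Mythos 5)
The paper offers no proof of this proposition at all --- it simply refers to \cite{HHK} (Proposition 2.4 and Corollary 2.6) --- so the comparison can only be with that source, and your route is the same as theirs. Parts (i), (ii) and (iv) are correctly handled: the $\ell^p\hookrightarrow\ell^q$ monotonicity for (iv), the Cauchy-criterion argument for one-sided limits together with the endpoint approximation for (i), and completeness via the embedding into $L^\I_t L^2_x$ plus lower semicontinuity of the partition sums for (ii) are all sound and are exactly the standard arguments.

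Two genuine gaps remain, both in (iii). First, in the inclusion $U^p\subset V^p_{-,\,rc}$ you assert that atoms have ``the required behavior at $-\I$'' without checking it. Under the paper's literal definition of $\mathcal{Z}$ (which forces $t_0=-\I$), an atom equals $\phi_0$ on $[-\I,t_1)$; the one-step atom $a=\1_{[-\I,\I)}\phi_0\equiv\phi_0$ then lies in $U^p$ but satisfies $\lim_{t\to-\I}a(t)=\phi_0\neq 0$, so it is not in $V^p_-$. The claim is only true under the convention of the erratum \cite{HHK2}, where atoms are built over partitions with $t_0>-\I$ (i.e.\ over $\mathcal{Z}_0$) and hence vanish near $-\I$; this needs to be said, since it is the only point where the convention matters. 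Second, for the crux $V^p_{-,\,rc}\subset U^q$ you correctly identify the dyadic stopping-time decomposition and the decisive count (at threshold $2^{-j}\|v\|_{V^p}$ there are $\lec 2^{jp}$ increments, so the $j$-th piece has $U^q$-norm $\lec 2^{-j}\cdot 2^{jp/q}$, summable precisely because $p<q$), but you do not construct the pieces: the inductive selection of the stopping times, the verification that each $v_j$ is right-continuous, vanishes at $-\I$ and is an admissible multiple of a $U^q$-atom, and the proof that $\sum_j v_j$ converges to $v$ constitute essentially the entire content of Proposition 2.5 of \cite{HHK} and its appendix proof, and they are the only nontrivial part of the whole statement. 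As written, the proposal is a correct outline of the standard proof rather than a complete proof.
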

The proof of Proposition \ref{embedding} is in ~\cite{HHK} (Proposition 2.4 and Corollary 2.6). 
Let $\{ \F_{\xi}^{-1}[\varphi_n](x)\}_{n\in \Z}\subset \mathcal{S}(\R^d)$ be the Littlewood-Paley decomposition with respect 
to $x$, that is to say
\EQQS{
\begin{cases}
 \varphi(\xi) \ge 0, \\
 \supp \varphi(\xi) = \{ \xi \,|\, 2^{-1} \le |\xi| \le 2\}, 
\end{cases}
}
\EQQS{
 \varphi_{n}(\xi) := \varphi(2^{-n}\xi),\ \sum_{n=-\I}^{\I}\varphi_{n}(\xi)=1\ (\, \xi \neq 0),\ 
 \psi(\xi) := 1-\sum_{n=0}^{\I}\varphi_{n}(\xi).
} 
Let $N=2^n\ (n \in \Z)$ be dyadic number. $P_N$ and $P_{<1}$ denote 
\EQQS{
 &\F_{x}[P_N f](\xi):=\varphi(\xi/N)\F_x[f](\xi)=\varphi_n(\xi)\F_x[f](\xi), \\
 &\F_{x}[P_{<1} f](\xi):=\psi(\xi)\F_x[f](\xi).
} 
Similarly, let $\til{Q}_N$ be 
\EQQS{
 \F_t [\til{Q}_N g](\ta):=\phi(\ta/N)\F_t[g](\ta)=\phi_n(\ta)\F_t[g](\ta),
}
where  $\{ \F_{\ta}^{-1}[\phi_{n}](t)\}_{n\in \Z}\subset \mathcal{S}(\R)$ be the Littlewood-Paley decomposition with respect to $t$. Let $K_{\pm}(t)=\exp \{\mp it (1-\laplacian)^{1/2} \}: L^2_x\to L^2_x$ be the Klein-Gordon unitary operator such that $\F_x[K_{\pm}(t)u_0](\xi ) = \exp \{\mp it \LR{\xi} \}\, \F_x[u_0](\xi ).$ Similarly, we define the wave unitary operator $W_{\pm c}(t)=\exp \{\mp ict (-\laplacian)^{1/2}\}:L^2_x\to L^2_x$ such that $\F_x[W_{\pm c}(t)n_0](\xi ) = \exp \{\mp ict |\xi |\}\, \F_x[n_0](\xi ).$ We set 
\EQQS{
 &W_L^{\pm c} := \bigl{\{} (\ta, \xi) \in \R \cross \R^d \, |\, L/2 \le \bigl| \ta \pm c|\xi| \bigr| \le 2L\bigr{\}}, \\ 
 &KG_L^{\pm} := \bigl{\{} (\ta, \xi) \in \R \cross \R^d \, |\, L/2 \le \bigl| \ta \pm \LR{\xi}\bigr| \le 2L\bigr{\}}. 
}
\begin{defn}
We define \\ 
 $(i)\, U^p_{K_{\pm}}=K_{\pm}(\cdot)U^p$ with norm $\| u\|_{U^p_{K_{\pm}}}=\|K_{\pm}(-\cdot )u\|_{U^p},$\\
 $(ii)\, V^p_{K_{\pm}}=K_{\pm}(\cdot)V^p$ with norm $\| u\|_{V^p_{K_{\pm}}}=\|K_{\pm}(-\cdot )u\|_{V^p}.$\\
For dyadic numbers $N, M$, 
\EQQS{
 Q_N^{K_{\pm}} := K_{\pm}(\cdot)\til{Q}_N K_{\pm}(-\cdot), \quad 
 Q_{\ge M}^{K_{\pm}} := \sum_{N \ge M} Q_N, \quad 
 Q_{<M}^{K_{\pm}} := Id - Q_{\ge M}^{K_{\pm}}.
}
Here summation over $N$ means summation over $n\in \Z$. 
Similarly, we define $U^p_{W_{\pm c}}, V^p_{W_{\pm c}}$. 
\end{defn}

\begin{rem} \label{embed}
For $L^2_x$ unitary operator $A=K_{\pm}$ or $W_{\pm c},$ 
\EQQS{
 U^2_A \subset V^2_{-,\, rc,\, A} \subset L^{\I}(\R; L^2_x)
}
\end{rem}
\begin{defn} \label{defX} 
For the Klein-Gordon equation, we define $Y^s_{K_{\pm}}$ (resp. $Z^s_{K_{\pm}})$ as the closure of all $u \in C(\R; H^s_x(\R^d)) \cap \LR{\na_x}^{-s} V^2_{-,\, rc,\, K_{\pm}}$ (resp. $u \in C(\R; H^s_x(\R^d)) \cap \LR{\na_x}^{-s} U^2_{K_{\pm}}$) with $Y^s_{K_{\pm}}$ (resp. $Z^s_{K_{\pm}}$) norm, where    
\EQQS{
  &\|u\|_{Y^s_{K_{\pm}}} := \|P_{<1}u\|_{V^2_{K_{\pm}}} + \Bigl(\sum_{N \ge 1} N^{2s}\|P_N u\|^2_{V^2_{K_{\pm}}}\Bigr)^{1/2}, \\
  &\|u\|_{Z^s_{K_{\pm}}} := \|P_{<1}u\|_{U^2_{K_{\pm}}} + \Bigl(\sum_{N \ge 1} N^{2s}\|P_N u\|^2_{U^2_{K_{\pm}}}\Bigr)^{1/2}. 
} 
For the wave equation, we define $\dot{Y}^s_{W_{\pm c}}, \dot{Z}^s_{W_{\pm c}}$ as the closure of all $n \in C(\R; H^s_x(\R^d)) \cap |\na_x|^{-s} V^2_{-,\, rc,\, W_{\pm c}}$ (resp. $n \in C(\R; H^s_x(\R^d)) \cap |\na_x|^{-s} U^2_{W_{\pm c}}$) with $\dot{Y}^s_{W_{\pm c}}$ (resp. $\dot{Z}^s_{W_{\pm c}}$) norm, where    
\EQQS{
  \|n\|_{\dot{Y}^s_{W_{\pm c}}} := \Bigl(\sum_N N^{2s}\|P_N n\|^2_{V^2_{W_{\pm c}}}\Bigr)^{1/2}, \qquad 
  \|n\|_{\dot{Z}_{W_{\pm c}}} := \Bigl(\sum_N N^{2s}\|P_N n\|^2_{U^2_{W_{\pm c}}}\Bigr)^{1/2}. 
} 
\end{defn}
\begin{defn}
For a Hilbert space $H$ and a Banach space $X\subset C(\R; H)$, we define
\EQQS{
 &B_r(H):=\{ f \in H\, |\, \|f\|_H \le r \}, \\
 &X([0,T)):=\{ u \in C([0,T);H)\, |\, \exists \til{u} \in X , \til{u}(t)=u(t), t \in [0,T) \} 
}
endowed with the norm $\|u\|_{X([0,T))}=\inf \{ \|\til{u}\|_X |\,  \til{u}(t)=u(t), t \in [0,T)\}.$
\end{defn}
We denote the Duhamel term 
\EQQS{
 &I_{T, K_{\pm}}(n,v) := \pm \int_0^t \1_{[0,T]}(t')K_{\pm}(t-t')n(t')(\om_1^{-1}v(t'))dt', \\
 &I_{T, W_{\pm c}}(u,v) := \pm \int_0^t \1_{[0,T]}(t')W_{\pm c}(t-t')\om \bigl( (\om_1^{-1}u(t')) (\ol{\om_1^{-1}v(t')})\bigr) dt'
}
for the Klein-Gordon equation and the wave equation respectively. 
The following proposition is in ~\cite{HHK} (Theorem 2.8 and Proposition 2.10). 
\begin{prop}  \label{U2norm}
Let $u\in V^1_{-, \, rc} \sub U^2$ be absolutely continuous on compact intervals. 
Then, $\|u\|_{U^2}=\ds \sup_{v \in V^2,\, \|v\|_{V^2}=1}\Bigl|\int_{-\I}^{\I}\LR{u'(t), v(t)}_{L^2_x} dt\Bigr|.$
\end{prop}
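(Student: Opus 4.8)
The plan is to prove the identity in two stages: first to realize the $U^2$ norm as a dual norm through a bilinear pairing with $V^2$, and then to identify that pairing with the integral $\int \LR{u', v}_{L^2_x}$ for absolutely continuous $u$. For a partition $\{t_k\}_{k=0}^K \in \mathcal{Z}$ I introduce the bilinear form
\EQQ{
 B_{\{t_k\}}(u,v) := \sum_{k=1}^K \LR{u(t_k)-u(t_{k-1}), v(t_{k-1})}_{L^2_x},
}
and define $B(u,v)$ as the limit of $B_{\{t_k\}}(u,v)$ under refinement of the partition. The first key step is the pairing bound $|B(u,v)| \le \|u\|_{U^2}\|v\|_{V^2}$. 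By the atomic decomposition and the definition of the $U^2$ norm, it suffices to check $|B(a,v)| \le \|v\|_{V^2}$ for a single $U^2$-atom $a = \sum_k \1_{[s_{k-1},s_k)}\phi_{k-1}$ with $\sum_k \|\phi_{k-1}\|_{L^2_x}^2 = 1$. Evaluating $B$ on a common refinement and applying Abel summation rewrites $B(a,v)$ as a single sum over the partition intervals of terms $\LR{\phi_{k-1}, v(s_k)-v(s_{k-1})}_{L^2_x}$, the boundary contributions being absorbed using the convention $v(\I)=0$ and the finite support of the atom; Cauchy--Schwarz in $L^2_x$ followed by the discrete H\"older inequality with exponents $2,2$ then bounds this by $(\sum_k\|\phi_{k-1}\|_{L^2_x}^2)^{1/2}(\sum_k\|v(s_k)-v(s_{k-1})\|_{L^2_x}^2)^{1/2} \le \|v\|_{V^2}$.

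With the pairing bound in hand, $v \mapsto B(\cdot, v)$ is a bounded map $V^2 \to (U^2)^*$, and I would upgrade it to an isometric isomorphism, reconstructing the duality argument of~\cite{HHK}. Surjectivity follows by representing a given $L \in (U^2)^*$ through $v(t) \in L^2_x$ defined by $\LR{v(t), \phi}_{L^2_x} = L(\1_{[t,\I)}\phi)$ via the Riesz representation theorem and verifying $\|v\|_{V^2} \lec \|L\|$; the matching lower bound on $\sup_v |B(u,v)|$, obtained by testing against atoms adapted to a near-optimal partition for $v$, gives the isometry. Granting $(U^2)^* \cong V^2$ via $B$, the Hahn--Banach identity $\|u\|_{U^2} = \sup_{\|L\|_{(U^2)^*}\le 1}|L(u)|$ yields at once $\|u\|_{U^2} = \sup_{v\in V^2,\,\|v\|_{V^2}=1}|B(u,v)|$.

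The second stage identifies $B(u,v)$ with the integral when $u \in V^1_{-,rc}$ is absolutely continuous on compact intervals. Absolute continuity gives $u(t_k)-u(t_{k-1}) = \int_{t_{k-1}}^{t_k} u'(t)\,dt$, so
\EQQ{
 B_{\{t_k\}}(u,v) = \sum_{k=1}^K \int_{t_{k-1}}^{t_k} \LR{u'(t), v(t_{k-1})}_{L^2_x}\, dt.
}
As the mesh of the partition tends to $0$, one has $v(t_{k-1}) \to v(t)$ at every continuity point of the regulated function $v$, hence for a.e. $t$, while $u' \in L^1(\R; L^2_x)$ because $u \in V^1$ has finite total variation; dominated convergence then gives $B(u,v) = \int_{-\I}^{\I} \LR{u'(t), v(t)}_{L^2_x}\, dt$. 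The conventions $v(\I)=0$ and $\lim_{t\to-\I}u(t)=0$ (the latter valid since $u \in V^1_{-,rc}$) ensure that the contributions at $\pm\I$ vanish. Combining the two stages yields the asserted formula.

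The main obstacle is the limiting argument in the second stage: one must justify that the Stieltjes-type sums $B_{\{t_k\}}(u,v)$ converge to $\int \LR{u', v}_{L^2_x}$ while controlling the tails near $\pm\I$ and the at most countable jump set of the regulated function $v$, and, crucially, one must verify that this limit coincides with the abstract pairing $B$ entering the duality. The functional-analytic input $(U^2)^* \cong V^2$ is essentially standard~\cite{HHK}; the delicate point specific to this statement is reconciling the two descriptions of $B$ for absolutely continuous $u$ and handling the boundary terms at infinity through the normalization conventions built into the $V^2$ norm.
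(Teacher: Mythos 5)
Your proposal is correct and follows essentially the same route as the proof the paper relies on, namely Theorem 2.8 and Proposition 2.10 of \cite{HHK}: first the duality $(U^2)^*\cong V^2$ via the partition bilinear form $B$ (with the atom estimate by Abel summation and Cauchy--Schwarz), then the identification of $B(u,v)$ with $\int\LR{u'(t),v(t)}_{L^2_x}\,dt$ for $u\in V^1_{-,rc}$ absolutely continuous on compact intervals. The points you flag as delicate (well-definedness of $B$ under refinement, the jump set of the regulated $v$, and the boundary conventions at $\pm\I$) are exactly the ones handled in \cite{HHK}, and your treatment of them is sound.
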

\begin{cor}  \label{U2A}
Let $A=K_{\pm}$ or $W_{\pm c}$ and $u \in V^1_{-,\, rc,\, A} \subset U^2_A$ be absolutely continuous on compact intervals.
Then, 
\EQQS{
 \|u\|_{U^2_A} = \sup_{v \in V^2_A,\, \|v\|_{V^2_A}=1}
                      \Bigl|\int_{-\I}^{\I} \LR{A(t)(A(-\cdot )u)'(t), v(t)}_{L^2_x} dt\Bigr|.
}
\end{cor}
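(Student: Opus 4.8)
The plan is to deduce Corollary \ref{U2A} from Proposition \ref{U2norm} by conjugating with the unitary group $A$. First I would introduce the pullback $w := A(-\cdot)u$, so that $w(t) = A(-t)u(t)$. By the definition of the $A$-adapted norms one has $\|u\|_{U^2_A} = \|w\|_{U^2}$, and $u \in V^1_{-,\,rc,\,A}$ means exactly that $w \in V^1_{-,\,rc} \sub U^2$; reading the absolute-continuity hypothesis in this same pullback frame, $w$ is absolutely continuous on compact intervals and hence meets the hypotheses of Proposition \ref{U2norm}. Applying that proposition to $w$ gives
\EQQS{
 \|u\|_{U^2_A} = \|w\|_{U^2} = \sup_{\ti v \in V^2,\, \|\ti v\|_{V^2}=1}\Bigl| \int_{-\I}^{\I} \LR{w'(t), \ti v(t)}_{L^2_x}\, dt \Bigr|.
}

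It then remains to transport the supremum to test functions in $V^2_A$. I would substitute $\ti v = A(-\cdot)v$; since $\|v\|_{V^2_A} = \|A(-\cdot)v\|_{V^2} = \|\ti v\|_{V^2}$ by definition, the map $v \mapsto A(-\cdot)v$ is a norm-preserving bijection from the unit sphere of $V^2_A$ onto that of $V^2$, with inverse $\ti v \mapsto A(\cdot)\ti v$. The operative fact is that $A(t) = \exp\{\mp it\LR{\na}\}$ (respectively $\exp\{\mp ict|\na|\}$) is unitary on $L^2_x$, so $A(-t)^* = A(t)$; hence for a.e.\ $t$,
\EQQS{
 \LR{w'(t), \ti v(t)}_{L^2_x} = \LR{w'(t), A(-t)v(t)}_{L^2_x} = \LR{A(t)w'(t), v(t)}_{L^2_x} = \LR{A(t)(A(-\cdot)u)'(t), v(t)}_{L^2_x}.
}
Substituting this into the displayed formula for $\|w\|_{U^2}$ and using the bijection to replace the supremum over $\ti v \in V^2$ by one over $v \in V^2_A$ yields precisely the claimed identity.

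I do not expect a deep obstacle here, since once the isometry $\|\cdot\|_{U^2_A} = \|A(-\cdot)\,\cdot\|_{U^2}$ and the adjoint relation $A(-t)^* = A(t)$ are in place the argument is a formal conjugation. The one step demanding genuine care is confirming that $w = A(-\cdot)u$ is admissible for Proposition \ref{U2norm}: if instead one insists that $u$ rather than $w$ be absolutely continuous, then $w$ need not be pointwise differentiable, since the generator $\LR{\na}$ is unbounded, and one must argue through the splitting $w(t)-w(s) = A(-t)[u(t)-u(s)] + [A(-t)-A(-s)]u(s)$, controlling the first term by unitarity together with the absolute continuity of $u$ and the second by the strong continuity of the group. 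For the purposes of this corollary the regularity is naturally read in the pullback frame, so this subtlety does not actually arise and the reduction to Proposition \ref{U2norm} is immediate.
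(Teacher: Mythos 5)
Your proposal is correct and is precisely the conjugation argument the paper intends: the corollary is stated without proof as an immediate consequence of Proposition \ref{U2norm}, obtained by pulling back via $w=A(-\cdot)u$, using the defining isometry $\|u\|_{U^2_A}=\|w\|_{U^2}$, and transporting the test functions through the unitary bijection $v\mapsto A(-\cdot)v$ with $A(-t)^*=A(t)$. Your remark on where the absolute-continuity hypothesis should be read (in the pullback frame) is a sensible clarification of the same argument, not a departure from it.
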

\begin{prop} \label{mlinear}
Let $T_0:\, L^2_x \cross \dots \cross L^2_x\to L^1_{loc}(\R^d;\C)\ $ 
be a n-linear operator.
Assume that for some $1 \le p < \I$ and $1\le q \le \I$, it holds that
$$\|T_0( K_{\pm}(\cdot)\phi_1, \dots ,K_{\pm}(\cdot)\phi_n)\|_{L^p_t(\R;L^q_x(\R^d))}
   \lec \ds \prod_{i=1}^n\|\phi_i\|_{L^2_x}.$$
Then, there exists $T:U_{K_{\pm}}^p \cross \dots \cross U_{K_{\pm}}^p\to 
      L^p_t(\R;L^q_x(\R^d))$ satisfying  
$$\|T(u_1, \dots , u_n)\|_{L^p_t(\R;L^q_x(\R^d))} \lec \ds \prod_{i=1}^n
                    \|u_i\|_{U^p_{K_{\pm}}},$$
such that
$T(u_1, \dots, u_n)(t)(x)=T_0(u_1(t), \dots , u_n(t))(x)$
a.e. 
\end{prop}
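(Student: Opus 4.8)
The plan is to establish the estimate first on $U^p_{K_{\pm}}$-atoms and then propagate it to the whole space by $n$-linearity together with the atomic decomposition. I would \emph{define} the operator by $T(u_1,\dots,u_n)(t):=T_0(u_1(t),\dots,u_n(t))$, which is meaningful for a.e.\ $t$ since each $u_i\in U^p_{K_{\pm}}\subset L^\I(\R;L^2_x)$ by Remark \ref{embed}, so that $u_i(t)\in L^2_x$ for a.e.\ $t$ and $T_0(u_1(t),\dots,u_n(t))\in L^1_{loc}$. Writing $u_i=K_{\pm}(\cdot)v_i$ with $v_i\in U^p$ and fixing atomic decompositions $v_i=\sum_j\lambda^i_j a^i_j$, the $n$-linearity of $T_0$ gives $T(u_1,\dots,u_n)=\sum_{j_1,\dots,j_n}\lambda^1_{j_1}\cdots\lambda^n_{j_n}\,T(K_{\pm}(\cdot)a^1_{j_1},\dots,K_{\pm}(\cdot)a^n_{j_n})$, so by the triangle inequality in $L^p_t(\R;L^q_x)$ it suffices to prove $\|T(K_{\pm}(\cdot)a_1,\dots,K_{\pm}(\cdot)a_n)\|_{L^p_t(\R;L^q_x)}\lec 1$ for arbitrary $U^p$-atoms $a_1,\dots,a_n$; taking the infimum over the decompositions afterwards then reconstructs $\prod_{i=1}^n\|u_i\|_{U^p_{K_{\pm}}}$ on the right.

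So I would fix $U^p$-atoms $a_i=\sum_m\1_{[s^i_{m-1},s^i_m)}\phi^i_m$ with $\sum_m\|\phi^i_m\|_{L^2_x}^p=1$ and pass to the common refinement $\{t_k\}_k$ of the $n$ underlying partitions. On each $[t_{k-1},t_k)$ every $a_i$ is constant, equal to some $\phi^i_{m_i(k)}$, so the input $K_{\pm}(\cdot)a_i$ coincides there with the single free Klein-Gordon evolution $t\mapsto K_{\pm}(t)\phi^i_{m_i(k)}$, whence $T(K_{\pm}(\cdot)a_1,\dots)(t)=T_0(K_{\pm}(t)\phi^1_{m_1(k)},\dots,K_{\pm}(t)\phi^n_{m_n(k)})$ on that interval. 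Summing over $k$, extending each interval integral to all of $\R$ (the integrand is nonnegative), and invoking the hypothesis on free solutions yields
\[
\|T(K_{\pm}(\cdot)a_1,\dots)\|_{L^p_t L^q_x}^p
=\sum_k\int_{t_{k-1}}^{t_k}\big\|T_0(K_{\pm}(t)\phi^1_{m_1(k)},\dots)\big\|_{L^q_x}^p\,dt
\lec\sum_k\prod_{i=1}^n\|\phi^i_{m_i(k)}\|_{L^2_x}^p .
\]

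The crux, and the step I expect to demand the most care, is this final sum. Because $\{t_k\}$ refines \emph{all} $n$ partitions, the map $k\mapsto(m_1(k),\dots,m_n(k))$ is injective: the set of $t$ on which $a_i=\phi^i_{m_i}$ holds simultaneously for all $i$ is $\bigcap_{i=1}^n[s^i_{m_i-1},s^i_{m_i})$, an interval with no refinement point in its interior, hence exactly one $[t_{k-1},t_k)$. Consequently the sum is controlled by the full product over all index tuples, which factorizes using the atom normalizations:
\[
\sum_k\prod_{i=1}^n\|\phi^i_{m_i(k)}\|_{L^2_x}^p
\le\prod_{i=1}^n\Big(\sum_m\|\phi^i_m\|_{L^2_x}^p\Big)=1 .
\]
This gives the atom bound and, after undoing the decomposition, the assertion; the only remaining point is to check that the operator produced by this limiting procedure agrees a.e.\ with $T_0(u_1(\cdot),\dots,u_n(\cdot))$, which follows since convergence in $U^p$ forces convergence in $L^\I_t L^2_x$ and hence, along a subsequence, pointwise a.e.\ in $t$. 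This combinatorial factorization, replacing the naive and lossy generalized Hölder bound, is the one genuinely multilinear ingredient; the remainder parallels the standard linear transference principle.
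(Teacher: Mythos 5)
Your proof is correct and is essentially the standard transference argument of Proposition 2.19 in ~\cite{HHK}, which the paper cites instead of reproving: reduction to atoms, passage to the common refinement of the $n$ partitions, and the injectivity of $k\mapsto(m_1(k),\dots,m_n(k))$ yielding the factorized bound $\sum_k\prod_i\|\phi^i_{m_i(k)}\|_{L^2_x}^p\le\prod_i\sum_m\|\phi^i_m\|_{L^2_x}^p=1$. No gap.
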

See Proposition 2.19 in ~\cite{HHK} for the proof of the above proposition. 
\begin{prop} \label{Strich-w}
Let $d \ge 3, 2 \le r < \I, 2/q = (d-1)(1/2-1/r), (q, r) \neq (2, 2(d-1)/(d-3))$ and $s = 1/q-1/r+1/2$. Then it holds that 
\EQQS{
 \| W_{\pm c}(t) f\|_{L^q_t \dot{W}^{-s,r}_x(\R^{1+d})} \lec \|f\|_{L^2_x(\R^d)}. 
}
\end{prop}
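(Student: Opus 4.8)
The plan is to reduce the statement to the classical homogeneous Strichartz estimate for the half-wave propagator and then to prove the latter by the standard Littlewood--Paley plus $TT^*$ machinery. First I would remove the constant $c$. Since $\F_x[W_{\pm c}(t)f](\xi) = e^{\mp ict|\xi|}\F_x f(\xi) = \F_x[W_{\pm 1}(ct)f](\xi)$, the substitution $t \mapsto t/c$ yields $\|W_{\pm c}(t)f\|_{L^q_t \dot{W}^{-s,r}_x} = c^{-1/q}\|W_{\pm 1}(t)f\|_{L^q_t \dot{W}^{-s,r}_x}$, and as $c$ is a fixed positive constant it suffices to treat $c=1$, i.e. to prove $\||\na|^{-s}W_{\pm 1}(t)f\|_{L^q_t L^r_x} \lec \|f\|_{L^2_x}$. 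A direct computation from $2/q=(d-1)(1/2-1/r)$ shows that $s=1/q-1/r+1/2 = (d+1)/4-(d+1)/(2r)$ coincides with the scaling-critical exponent $d/2-d/r-1/q$, so the estimate is invariant under the natural scaling $f \mapsto f(\la\, \cdot)$, $t \mapsto \la t$ of the wave equation; the excluded pair $(q,r)=(2,2(d-1)/(d-3))$ is precisely the forbidden wave endpoint.

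Next I would localize in frequency. Decomposing $f=\sum_N P_N f$ over dyadic $N=2^n$, $n \in \Z$, the key building block is the single-frequency bound $\|W_{\pm 1}(t)P_N f\|_{L^q_t L^r_x} \lec N^s \|P_N f\|_{L^2_x}$, and by the scale invariance just noted it is enough to establish this at $N=1$, namely $\|W_{\pm 1}(t)P_1 f\|_{L^q_t L^r_x} \lec \|P_1 f\|_{L^2_x}$. Here I would combine the energy identity $\|W_{\pm 1}(t)g\|_{L^2_x}=\|g\|_{L^2_x}$ with the dispersive decay estimate $\|W_{\pm 1}(t)P_1 g\|_{L^\I_x} \lec (1+|t|)^{-(d-1)/2}\|g\|_{L^1_x}$, which follows from stationary phase applied to the oscillatory integral with phase $\pm t|\xi|+x\cdot\xi$, the decay rate $(d-1)/2$ reflecting the $d-1$ nonvanishing principal curvatures of the sphere $|\xi|\sim 1$. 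Interpolating these two bounds and running the $TT^*$ argument reduces matters to a one-dimensional time convolution against the kernel $|t-s|^{-2/q}$; the Hardy--Littlewood--Sobolev inequality closes the estimate for every non-endpoint admissible pair, while the abstract endpoint theorem of Keel--Tao handles the endpoint $q=2$, with the excluded pair being exactly the one their argument cannot reach.

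Finally I would reassemble the pieces. Using the Littlewood--Paley square function estimate in $L^r_x$ (valid since $r\ge 2$), followed by Minkowski's inequality in the time and frequency variables (valid since $q,r\ge 2$), together with $\||\na|^{-s}W_{\pm 1}(t)P_N f\|_{L^q_t L^r_x}\sim N^{-s}\|W_{\pm 1}(t)P_N f\|_{L^q_t L^r_x}$ and the single-frequency bound, one arrives at
\[
 \||\na|^{-s}W_{\pm 1}(t)f\|_{L^q_t L^r_x} \lec \Bigl(\sum_N \|P_N f\|_{L^2_x}^2\Bigr)^{1/2} \sim \|f\|_{L^2_x},
\]
the last step by $L^2_x$-orthogonality of the Littlewood--Paley pieces. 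The main obstacle is the endpoint analysis: verifying the sharp dispersive decay with the correct power $(d-1)/2$ and invoking the real-interpolation endpoint argument while remaining off the forbidden pair is where the genuine work lies, whereas the reduction to $c=1$, the rescaling to unit frequency, and the $\ell^2$ summation are routine once the unit-frequency estimate is in hand.
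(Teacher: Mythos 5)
The paper does not prove this proposition at all: it is quoted as a known result, with the reader referred to \cite{K} and \cite{GV} for the proof. Your argument is, in substance, the standard proof that those references (and the frequency-localized dispersive-estimate literature generally) carry out: reduction to $c=1$ by rescaling time, Littlewood--Paley decomposition, the unit-frequency dispersive bound $\lec (1+|t|)^{-(d-1)/2}$ from stationary phase on the sphere, the $TT^*$ argument with Hardy--Littlewood--Sobolev, and reassembly via the square-function estimate and Minkowski (legitimate since $q,r\ge 2$), with the identification $s=1/q-1/r+1/2=d/2-d/r-1/q$ making the dyadic pieces sum by $L^2$-orthogonality. All of this is correct. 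One small clarification: under the sharp admissibility relation $2/q=(d-1)(1/2-1/r)$, the case $q=2$ occurs \emph{only} at the excluded pair $(2,2(d-1)/(d-3))$ (and for $d=3$ it would force $r=\I$, also excluded), so the Hardy--Littlewood--Sobolev step with $0<2/q<1$ already covers every pair allowed by the proposition and the Keel--Tao endpoint theorem \cite{KT} is never actually invoked; your sentence suggesting that Keel--Tao ``handles the endpoint $q=2$'' while the excluded pair is ``the one their argument cannot reach'' is internally inconsistent (in fact Keel--Tao do prove that wave endpoint for $d\ge 4$; it is simply not needed here). This does not affect the validity of your proof.
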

For the proof of Proposition \ref{Strich-w}, see ~\cite{K}, ~\cite{GV}. 
\begin{prop} \label{Strich-kg}
Let $d \ge 3, 2 \le r < \I, 2/q = (d-1)(1/2-1/r), (q, r) \neq (2, 2(d-1)/(d-3))$ and $s = 1/q-1/r+1/2$. Then, it holds that 
\EQQS{
 \| K_{\pm}(t) f\|_{L^q_t W^{-s,r}_x(\R^{1+d})} \lec \|f\|_{L^2_x(\R^d)}. 
}
\end{prop}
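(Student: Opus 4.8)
The plan is to prove the estimate one frequency piece at a time and then reassemble by Littlewood--Paley theory, running the same $TT^{*}$ scheme that underlies the wave estimate of Proposition \ref{Strich-w}. Writing $f=P_{<1}f+\sum_{N\ge1}P_Nf$ and noting that the multiplier $\LR{\na}^{-s}$ is comparable to $N^{-s}$ on the support of $P_N$ ($N\ge1$) and to $1$ on that of $P_{<1}$, the claim reduces to the single-frequency bounds $\|K_\pm(t)P_Nf\|_{L^q_tL^r_x}\lec N^s\|P_Nf\|_{L^2_x}$ for $N\ge1$ together with $\|K_\pm(t)P_{<1}f\|_{L^q_tL^r_x}\lec\|f\|_{L^2_x}$. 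Since $r\ge2$ and $q\ge2$, these recombine through the Littlewood--Paley square function estimate, Minkowski's inequality $\|(\sum_N|g_N|^2)^{1/2}\|_{L^q_tL^r_x}\le(\sum_N\|g_N\|^2_{L^q_tL^r_x})^{1/2}$, and the orthogonality $\sum_N\|P_Nf\|_{L^2_x}^2\sim\|f\|_{L^2_x}^2$.

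The heart of the matter is the fixed-time $L^1_x\to L^\infty_x$ dispersive estimate. First I would establish, for $N\ge1$,
\[
\|K_\pm(t)P_Nf\|_{L^\infty_x}\lec N^{(d+1)/2}|t|^{-(d-1)/2}\|f\|_{L^1_x},
\]
and, for the low-frequency piece, $\|K_\pm(t)P_{<1}f\|_{L^\infty_x}\lec|t|^{-(d-1)/2}\|f\|_{L^1_x}$. Both come from stationary phase applied to the oscillatory integral with phase $x\cdot\x\mp t\LR{\x}$. The Hessian of $\LR{\x}$ has $d-1$ transverse eigenvalues of size $\LR{\x}^{-1}$ and one radial eigenvalue of size $\LR{\x}^{-3}$; using only the transverse curvature, and bounding the radial direction by the width of the dyadic annulus, produces the decay rate $(d-1)/2$ and the weight $N^{(d+1)/2}$, uniformly in $t$ after comparing with the trivial bound $N^d$ in the range $|t|\le N^{-1}$. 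For $P_{<1}$ the phase is non-degenerate, so one even gets the Schr\"odinger rate $|t|^{-d/2}$, which dominates $|t|^{-(d-1)/2}$ for $|t|\ge1$ and is absorbed by the trivial bound for $|t|\le1$; this is precisely why the low-frequency norm is the inhomogeneous $W^{-s,r}_x$ (no derivative loss, no low-frequency divergence), in contrast to the homogeneous $\dot{W}^{-s,r}_x$ of the wave case.

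With the dispersive estimate in hand I would combine it with the $L^2$ conservation $\|K_\pm(t)g\|_{L^2_x}=\|g\|_{L^2_x}$ and run the standard $TT^{*}$ argument: interpolation gives $\|K_\pm(t-s)P_N\|_{L^{r'}_x\to L^r_x}\lec(N^{(d+1)/2}|t-s|^{-(d-1)/2})^{1-2/r}$, and the time integration closes by the Hardy--Littlewood--Sobolev inequality with kernel $|t-s|^{-\gamma}$, $\gamma=\tfrac{d-1}{2}(1-\tfrac2r)$. The HLS exponents force exactly $2/q=(d-1)(1/2-1/r)$, i.e. the stated admissibility, and a short computation identifies the surviving frequency power as $N^{\frac{d+1}{2}(1/2-1/r)}=N^{s}$, which is the asserted loss (and $N^0$ in the low-frequency case). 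Crucially, HLS is used in its non-endpoint form, valid only for $q>2$; the hypothesis $(q,r)\ne(2,2(d-1)/(d-3))$ removes the unique admissible point with $q=2$, so this is legitimate and the delicate Keel--Tao endpoint is never needed.

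The main obstacle is the dispersive estimate itself: unlike the wave propagator, the Klein--Gordon phase $\LR{\x}$ has a partially degenerate Hessian (the weak radial eigenvalue $\LR{\x}^{-3}$), so the stationary-phase analysis must be arranged to extract only the $d-1$ strong transverse directions and to track the frequency weight uniformly across the regimes $|t|\le N^{-1}$ and $|t|\ge N^{-1}$; the $TT^{*}$ reduction, the HLS step, and the Littlewood--Paley reassembly are then routine once this bound and the admissibility bookkeeping are fixed. One might instead hope to deduce the $N\ge1$ bound directly from Proposition \ref{Strich-w} by writing $K_\pm(t)=e^{\mp it(\LR{\x}-|\x|)}W_{\pm1}(t)$ on the $N$-th annulus, but the correction multiplier $e^{\mp it(\LR{\x}-|\x|)}$ is uniformly Mikhlin-bounded only for $|t|\lec N$ and degrades for large $t$ (exactly where Klein--Gordon disperses faster than the wave), so the direct dispersive route is the cleaner one.
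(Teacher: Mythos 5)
Your proposal is correct: the paper does not prove Proposition \ref{Strich-kg} itself but simply refers to \cite{MNO2}, and your argument---the frequency-localized dispersive bound $\|K_\pm(t)P_Nf\|_{L^\infty_x}\lec N^{(d+1)/2}|t|^{-(d-1)/2}\|f\|_{L^1_x}$ from stationary phase in the $d-1$ angular directions, followed by $TT^*$ with non-endpoint Hardy--Littlewood--Sobolev and Littlewood--Paley reassembly---is exactly the standard route taken in that reference, with the exponent bookkeeping ($N^{\frac{d+1}{2}(1/2-1/r)}=N^s$, $\gamma=2/q$) checking out. The only cosmetic remark is that the case $r=2$, $q=\infty$ must be handled by unitarity rather than HLS, but that case is trivial.
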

For the proof of Proposition \ref{Strich-kg}, see ~\cite{MNO2}. 
Combining Proposition \ref{embedding}, Proposition \ref{mlinear}, Proposition \ref{Strich-w} and Proposition \ref{Strich-kg}, we have the following.   
\begin{prop} \label{Str}
Let $d \ge 3, 2 \le r < \I, 2/q = (d-1)(1/2-1/r), (q, r) \neq (2, 2(d-1)/(d-3))$ and $s = 1/q-1/r+1/2$.  
If $p < q$, then it holds that 
\EQQS{
 \|f\|_{L^q_t W^{-s,r}_x(\R^{1+d})} \lec \|f\|_{V^p_{K_{\pm}}}, \qquad 
 \|f\|_{L^q_t \dot{W}^{-s,r}_x(\R^{1+d})} \lec \|f\|_{V^p_{W_{\pm c}}}.
}
\end{prop}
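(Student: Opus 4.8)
The plan is to transfer the linear Strichartz estimates of Propositions \ref{Strich-w} and \ref{Strich-kg} first to the atomic spaces $U^q$ via Proposition \ref{mlinear}, and then, by the embedding of Proposition \ref{embedding}, down to $V^p$ for $p<q$. I treat the Klein-Gordon case; the wave case is identical after replacing $K_{\pm}$, $\LR{\na_x}^{-s}$, $W^{-s,r}_x$ by $W_{\pm c}$, $|\na_x|^{-s}$, $\dot{W}^{-s,r}_x$ (Proposition \ref{mlinear} applies verbatim to the unitary group $W_{\pm c}$).

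First I would rewrite the Sobolev norm as an $L^r_x$ norm, $\|g\|_{W^{-s,r}_x}=\|\LR{\na_x}^{-s}g\|_{L^r_x}$. Since $\LR{\na_x}^{-s}$ is a Fourier multiplier in $x$, it commutes with $K_{\pm}(t)$, so Proposition \ref{Strich-kg} reads $\|\LR{\na_x}^{-s}K_{\pm}(\cdot)\phi\|_{L^q_t L^r_x}=\|K_{\pm}(\cdot)\phi\|_{L^q_t W^{-s,r}_x}\lec \|\phi\|_{L^2_x}$. Setting $T_0\phi:=\LR{\na_x}^{-s}\phi$, this is precisely a bound of the form required in Proposition \ref{mlinear} with $n=1$, where the time and space exponents $(p,q)$ of that proposition are here $(q,r)$. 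Proposition \ref{mlinear} then yields an operator $T$ on $U^q_{K_{\pm}}$ with $T(u)(t)=\LR{\na_x}^{-s}u(t)$ a.e.\ and $\|T(u)\|_{L^q_t L^r_x}\lec \|u\|_{U^q_{K_{\pm}}}$; that is, $\|u\|_{L^q_t W^{-s,r}_x}\lec \|u\|_{U^q_{K_{\pm}}}$.

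Next I descend to $V^p$. Since $p<q$, Proposition \ref{embedding}(iii) gives the continuous embedding $V^p_{-,rc}\subset U^q$; applying $K_{\pm}(-\cdot)$ transports this to $V^p_{-,rc,K_{\pm}}\subset U^q_{K_{\pm}}$ with $\|u\|_{U^q_{K_{\pm}}}\lec \|u\|_{V^p_{K_{\pm}}}$. To reach the full $V^p_{K_{\pm}}$, set $v:=K_{\pm}(-\cdot)f$; passing to the right-continuous representative (which has the same $V^p$ norm by Proposition \ref{embedding}(i)--(ii) and agrees with $v$ a.e., hence has the same $L^q_t W^{-s,r}_x$ norm) I may assume $v\in V^p_{rc}$, and I split $v=\phi_0+w$ with $\phi_0:=v(-\I)$ and $w:=v-\phi_0\in V^p_{-,rc}$. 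The term $K_{\pm}(\cdot)\phi_0$ is a free solution, controlled directly by Proposition \ref{Strich-kg} together with $\|\phi_0\|_{L^2_x}=\|v(-\I)-v(\I)\|_{L^2_x}\le \|v\|_{V^p}=\|f\|_{V^p_{K_{\pm}}}$, while $K_{\pm}(\cdot)w$ is estimated by the previous paragraph. Summing the two contributions gives $\|f\|_{L^q_t W^{-s,r}_x}\lec \|f\|_{V^p_{K_{\pm}}}$.

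The only genuinely delicate point is this last step, the passage from $U^q$ to the full $V^p$: the clean embedding of Proposition \ref{embedding} is stated for $V^p_{-,rc}$, so one must separate the (possibly nonzero) limit at $-\I$, which is a free solution governed by the linear estimate, from the $V^p_-$ remainder, and pass to right-continuous representatives to invoke the embedding. The strict inequality $p<q$ is essential, since $V^p_{-,rc}\subset U^q$ fails at $p=q$; everything else is bookkeeping with the commuting multiplier $\LR{\na_x}^{-s}$ (resp.\ $|\na_x|^{-s}$) and the notational identification of the exponents in Proposition \ref{mlinear} with those of the present statement.
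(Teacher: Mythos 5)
Your argument is exactly the route the paper intends: the paper offers no written proof beyond the sentence ``Combining Proposition \ref{embedding}, Proposition \ref{mlinear}, Proposition \ref{Strich-w} and Proposition \ref{Strich-kg}, we have the following,'' and your proposal is precisely that combination (transfer to $U^q$ via Proposition \ref{mlinear}, then the embedding $V^p_{-,rc}\subset U^q$ for $p<q$), with the additional care of splitting off the limit at $-\infty$ to pass from $V^p_{-,rc}$ to the full $V^p$ correctly carried out. No gaps.
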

\begin{prop} \label{unique}  
(i) Let $T>0$ and $u \in Y^s_{K_{\pm}}([0,T]), u(0)=0$. Then, there exists $0 \le T' \le T$ such that 
$\|u\|_{Y^s_{K_{\pm}}([0,T'])} < \e $.   \\
(ii) Let $T>0$ and $n \in \dot{Y}^s_{W_{\pm c}}([0,T]), n(0)=0$. 
Then, there exists $0 \le T' \le T$ such that $\|n\|_{\dot{Y}^s_{W_{\pm c}}([0,T'])} < \e$. 
\end{prop}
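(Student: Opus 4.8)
The plan is to prove (i) and (ii) together, since (ii) is identical to (i) with $K_{\pm}$ replaced by $W_{\pm c}$ and the inhomogeneous weight $\LR{\na_x}^s$ replaced by the homogeneous $|\na_x|^s$; I describe (i). Throughout, fix a global extension $\ti u \in Y^s_{K_{\pm}}$ of $u$ (so $\ti u = u$ on $[0,T)$, hence $\ti u(0) = u(0) = 0$). Since $\|u\|_{Y^s_{K_{\pm}}([0,T'])} \le \|\ti u\|_{Y^s_{K_{\pm}}([0,T'])}$ for every $T' \le T$, it suffices to make the right-hand side smaller than $\e$.

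First I would reduce to finitely many frequencies. As $\ti u \in Y^s_{K_{\pm}}$, the genuine global quantity $\|P_{<1}\ti u\|^2_{V^2_{K_{\pm}}} + \sum_{N \ge 1} N^{2s}\|P_N \ti u\|^2_{V^2_{K_{\pm}}}$ is finite. Restriction norms are monotone in the interval, so $\|P_N \ti u\|_{V^2_{K_{\pm}}([0,T'])} \le \|P_N \ti u\|_{V^2_{K_{\pm}}}$ uniformly in $T' \le T$. Hence I may choose $N_0 = N_0(\e)$ so that the tail $\sum_{N > N_0} N^{2s}\|P_N \ti u\|^2_{V^2_{K_{\pm}}([0,T'])} < \e^2/4$ for all $T'$, leaving only the finitely many pieces $P_{<1}\ti u, P_1\ti u, \dots, P_{N_0}\ti u$ to control.

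The heart of the matter is to show, for each of these finitely many pieces, that $\|P_N \ti u\|_{V^2_{K_{\pm}}([0,T'])} \to 0$ as $T' \to 0^+$. Writing $v := K_{\pm}(-\cdot)P_N\ti u$, the function $v$ takes values in $L^2_x$, is continuous, and satisfies $v(0)=0$ because $\ti u(0)=0$. Extending $v|_{[0,T')}$ by $0$ on $(-\I,0)$ and by its left limit on $[T',\I)$ produces an admissible competitor, so by Proposition \ref{embedding} the restriction norm is controlled by the intrinsic squared $2$-variation $g(I) := \sup \sum_k \|v(t_k)-v(t_{k-1})\|^2_{L^2_x}$ (supremum over partitions of $I$), evaluated on $I=[0,T']$. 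Now $g$ is superadditive on adjacent intervals, $g([0,T']) + g([T',T]) \le g([0,T])$, because concatenating near-optimal partitions of $[0,T']$ and $[T',T]$ gives an admissible partition of $[0,T]$. Thus $g([0,T']) \le g([0,T]) - g([T',T])$, and it remains to see that $g([T',T]) \to g([0,T])$. Fixing a near-optimal partition $0 = t_0 < t_1 < \dots < t_K = T$ and moving its initial node from $0$ to $T' < t_1$ replaces the first term $\|v(t_1)\|^2_{L^2_x}$ by $\|v(t_1)-v(T')\|^2_{L^2_x}$; continuity of $v$ with $v(0)=0$ forces the latter back to the former, so $g([T',T])$ recovers $g([0,T])$ in the limit, and therefore $g([0,T']) \to 0$.

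Finally, choosing $T'$ small enough that each of the finitely many low-frequency contributions $N^{2s}\|P_N\ti u\|^2_{V^2_{K_{\pm}}([0,T'])}$ (and $\|P_{<1}\ti u\|^2_{V^2_{K_{\pm}}([0,T'])}$) is below $\e^2/(4(N_0+2))$ yields $\|u\|_{Y^s_{K_{\pm}}([0,T'])} < \e$. Part (ii) is the same, the only change being that $s_c = d/2-2 > 0$ for $d \ge 5$ makes the homogeneous sum $\sum_N N^{2s}\|P_N n\|^2_{V^2_{W_{\pm c}}}$ summable at both high and low frequencies, so again only finitely many middle frequencies survive the truncation. The main obstacle is the single-piece decay: since the $2$-variation is genuinely a $2$-variation and not the integral of a measure, it is not additive, and one cannot simply argue that ``the tail of a convergent sum vanishes''; the superadditivity of $g$ together with the continuity of $v$ at $t=0$ is precisely what substitutes for that naive reasoning, and correctly matching the restriction norm with the intrinsic $2$-variation is the accompanying technical point.
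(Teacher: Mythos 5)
Your argument is correct and is essentially the proof of Proposition 2.24 in Hadac--Herr--Koch \cite{HHK}, which is all the paper itself invokes here (it gives no independent proof). The two key points you isolate --- superadditivity of the squared $2$-variation combined with continuity of $K_{\pm}(-\cdot)P_N\tilde{u}$ at $t=0$ with value $0$, and the reduction of the restriction norm to the intrinsic $2$-variation via an explicit extension (where $u(0)=0$ also controls the final jump to $v(\infty)=0$ in the $V^2$ convention) --- are exactly the ones that argument turns on.
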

For the proofs of $(i)$ and $(ii)$, see Proposition 2.24 in \cite{HHK}.

\begin{lem} \label{estX}
Let $a \ge 0$. Then for $A=K_{\pm}$ or $W_{\pm c}$, it holds that 
\EQQS{
 \|\LR{\na _x}^a f\|_{V^2_A} \lec \|f\|_{Y^a_A}. 
}
\end{lem}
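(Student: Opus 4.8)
The plan is to reduce everything to the plain space $V^2$ and then to exchange the partition sum defining the $V^2$ norm with the Littlewood--Paley sum, so that no genuine square function estimate for $V^2$ is needed.

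First I would remove the operator $A$. Since $A(t)$ acts as the spatial Fourier multiplier $e^{\mp it\LR{\xi}}$ (for $A=K_{\pm}$) or $e^{\mp ict|\xi|}$ (for $A=W_{\pm c}$), it commutes with the spatial multipliers $\LR{\na_x}^a$, $P_{<1}$ and $P_N$. Writing $g:=A(-\cdot)f$, the definition of the $V^2_A$ norm gives $\|f\|_{V^2_A}=\|g\|_{V^2}$, $\|\LR{\na_x}^a f\|_{V^2_A}=\|\LR{\na_x}^a g\|_{V^2}$ and $\|P_N f\|_{V^2_A}=\|P_N g\|_{V^2}$. Hence it suffices to prove, for the Klein--Gordon case,
\EQQS{
\|\LR{\na_x}^a g\|_{V^2}^2 \lec \|P_{<1}g\|_{V^2}^2 + \sum_{N\ge1}N^{2a}\|P_N g\|_{V^2}^2 ,
}
and for the wave case the homogeneous analogue with $\LR{\na_x}^a$ replaced by $|\na_x|^a$, no $P_{<1}$ term, and the sum taken over all dyadic $N$.

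Next, fix a partition $\{t_k\}_{k=0}^K$ and set $\delta_k:=g(t_k)-g(t_{k-1})$. By Plancherel together with the bounded overlap of the pieces $\psi,\varphi_N$, one has the elementary symbol comparison $\LR{\xi}^{2a}\sim\psi(\xi)^2+\sum_{N\ge1}N^{2a}\varphi_N(\xi)^2$, which yields for each increment
\EQQS{
\|\LR{\na_x}^a\delta_k\|_{L^2_x}^2 \sim \|P_{<1}\delta_k\|_{L^2_x}^2 + \sum_{N\ge1}N^{2a}\|P_N\delta_k\|_{L^2_x}^2 .
}
The crux is then to sum this over $k$ and interchange the finite $k$-sum with the $N$-sum (all terms are nonnegative, so the exchange is free). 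Because $P_{<1}$ and $P_N$ are spatial multipliers, $P_N\delta_k=(P_Ng)(t_k)-(P_Ng)(t_{k-1})$, so for each fixed $N$ the inner sum $\sum_k\|P_N\delta_k\|_{L^2_x}^2$ is the squared $2$-variation of $P_Ng$ along this one partition and is therefore bounded by $\|P_Ng\|_{V^2}^2$; the same holds for $P_{<1}$. This gives a bound by the right-hand side that is uniform in the chosen partition.

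Finally I would take the supremum over all partitions on the left: since the bound $\|P_{<1}g\|_{V^2}^2+\sum_{N\ge1}N^{2a}\|P_Ng\|_{V^2}^2$ does not depend on the partition, the supremum passes through and we obtain the claim; recalling $g=A(-\cdot)f$ finishes the Klein--Gordon case, and the wave case is identical with the homogeneous decomposition. I expect the only delicate point to be precisely this interchange: $V^2$ does \emph{not} admit a direct square function estimate of the form $\|\sum_N g_N\|_{V^2}\lec(\sum_N\|g_N\|_{V^2}^2)^{1/2}$, so one cannot simply split $\LR{\na_x}^a g=\sum_N\LR{\na_x}^aP_Ng$ and sum the $V^2$ norms. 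Performing the exchange at the level of a single fixed partition---where the defining quantity is a genuine $\ell^2$ sum to which Plancherel applies directly---is what circumvents this obstruction.
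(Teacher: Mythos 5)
Your proof is correct and follows essentially the same route as the paper: for each fixed partition you apply Plancherel/$L^2_x$-orthogonality of the Littlewood--Paley pieces to each increment, use that the spatial multipliers $P_{<1},P_N$ commute with $A(-t)$, exchange the partition sum with the $N$-sum, and only then take the supremum over partitions so that it lands on each fixed-$N$ piece separately. The observation that this ordering of the supremum and the $N$-sum is what substitutes for a (false) square-function estimate in $V^2$ is exactly the content of the paper's one-line chain of inequalities.
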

\begin{proof}
We only prove for $A=K_{\pm}$ since we can prove similarly for $A=W_{\pm c}$. 
By $L^2_x$ orthogonality, we have  
\EQQS{ 
  \|\LR{\na _x}^a f\|_{V^2_{K_{\pm}}}^2 
    &\lec \sup_{\{t_i\}_{i=0}^I \in \mathcal{Z}} 
              \sum_{i = 1}^I (\|P_{<1}(K_{\pm}(-t_i)f(t_i)-K_{\pm}(-t_{i-1})f(t_{i-1}))\|_{L^2_x}^2 \notag \\
    &\quad               
              + \sum_{N \ge 1} N^{2a} \|P_N(K_{\pm}(-t_i)f(t_i)-K_{\pm}(-t_{i-1})f(t_{i-1}))\|_{L^2_x}^2) \notag \\
    &\lec \sup_{\{t_i\}_{i=0}^I \in \mathcal{Z}} 
              \sum_{i=1}^I \|K_{\pm}(-t_i)P_{<1}f(t_i)-K_{\pm}(-t_{i-1})P_{<1}f(t_{i-1})\|_{L^2_x}^2 \notag \\
    &\quad    
              + \sum_{N \ge 1} N^{2a} \sup_{\{t_i\}_{i=0}^I \in \mathcal{Z}} 
                   \sum_{i=1}^I \|K_{\pm}(-t_i)P_N f(t_i)-K_{\pm}(-t_{i-1})P_N f(t_{i-1})\|_{L^2_x}^2 \notag \\
    &\lec \|f\|_{Y^a_{K_{\pm}}}^2.
}
\end{proof}
\begin{rem} \label{estx}
Similarly, we see 
\EQQS{
 \| |\na _x|^a f\|_{V^2_A} \lec \|f\|_{\dot{Y}^a_A}.  
}
\end{rem}

\begin{lem} \label{Mhigh}
If $f, g$ are measurable functions, then for $Q = Q_{<M}^A$ or $Q_{\ge M}^A, A = K_{\pm}$ or $W_{\pm c}$, it holds that 
\EQQS{
 \int_{\R^{1+d}} f(t,x) \ol{Q g(t,x)} dxdt = \int_{\R^{1+d}} \bigl( Q f(t,x)\bigr) \ol{g(t,x)}dxdt. 
}
\end{lem}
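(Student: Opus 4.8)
The plan is to show that each operator $Q$ in the statement is, on the space-time Fourier side, multiplication by a \emph{real-valued} symbol; once this is known the identity is just Parseval's formula together with the reality of the symbol. Concretely, I would first reduce to a single Littlewood--Paley piece $Q = Q_N^A$. Since $Q_{\ge M}^A = \sum_{N \ge M} Q_N^A$ and $Q_{<M}^A = Id - Q_{\ge M}^A$, and since the identity operator trivially satisfies the claimed relation, it suffices to understand $Q_N^A$; the combined symbols of $Q_{\ge M}^A$ and $Q_{<M}^A$ will then be handled by summing the (real) pieces rather than by interchanging a sum with an integral.

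The key step is to compute the full symbol of $Q_N^A$. Take $A = K_{\pm}$ first (the case $A = W_{\pm c}$ is identical with $\LR{\xi}$ replaced by $c|\xi|$). The definition $Q_N^{K_{\pm}} = K_{\pm}(\cdot)\til{Q}_N K_{\pm}(-\cdot)$ is a conjugation of the temporal multiplier $\til{Q}_N$ (symbol $\phi_n(\ta)$) by the modulation operators $K_{\pm}(\pm \cdot)$, which on the Fourier side merely translate the temporal frequency $\ta$ by $\mp \LR{\xi}$. Unwinding these translations, I expect
\EQQS{
 \F_{t,x}[Q_N^{K_{\pm}}h](\ta,\xi) = \phi_n(\ta \pm \LR{\xi})\,\F_{t,x}h(\ta,\xi),
}
so that $Q_N^{K_{\pm}}$ is multiplication by the symbol $m_N(\ta,\xi) = \phi_n(\ta \pm \LR{\xi})$, which is real because $\phi_n$ is. Equivalently, one may phrase this operator-theoretically: as a multiplication operator on $L^2_{t,x}$, $h(t)\mapsto K_{\pm}(t)h(t)$ is unitary with adjoint $h(t)\mapsto K_{\pm}(-t)h(t)$, and $\til{Q}_N$ is self-adjoint (real temporal symbol), so the conjugate $Q_N^{K_{\pm}}$ is self-adjoint. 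Summing gives the real symbols $\sum_{N\ge M}\phi_n(\ta\pm\LR{\xi})$ for $Q_{\ge M}^{K_{\pm}}$ and $1 - \sum_{N\ge M}\phi_n(\ta\pm\LR{\xi})$ for $Q_{<M}^{K_{\pm}}$.

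With $Q$ realized as a real Fourier multiplier $m = m(\ta,\xi)$, the proof concludes by Parseval in $(t,x)$:
\EQQS{
 \int_{\R^{1+d}} f\,\ol{Qg}\,dxdt
   &= \int_{\R^{1+d}} \F f\,\ol{m\,\F g}\,d\ta d\xi \\
   &= \int_{\R^{1+d}} m\,\F f\,\ol{\F g}\,d\ta d\xi
    = \int_{\R^{1+d}} (Qf)\,\ol{g}\,dxdt,
}
where the middle equality uses $\ol{m} = m$ and the same normalizing constant appears on both sides.

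The step I expect to require the most care is not the algebra but the hypothesis: $f$ and $g$ are assumed only measurable, so to legitimately apply Parseval and to move the multiplier across the conjugation one needs the relevant integrals to converge. In the applications of this lemma both sides are absolutely convergent, so I would either state the implicit integrability under which the manipulation is valid, or first establish the identity for Schwartz (or $L^2_{t,x}$) functions and then extend by the density and continuity already recorded for these spaces. Everything else is a routine verification of the translation-conjugation computation above.
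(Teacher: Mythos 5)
Your argument is correct: $Q_N^A$ is conjugation of the real temporal multiplier $\phi_n$ by the unitary modulation $A(\pm\cdot)$, hence a real space-time Fourier multiplier with symbol $\phi_n(\ta\pm\LR{\xi})$ (resp. $\phi_n(\ta\pm c|\xi|)$), and the identity follows from Parseval; your treatment of $Q_{\ge M}^A$ and $Q_{<M}^A$ via the (locally finite) sum of real symbols and of the integrability caveat is also appropriate. The paper itself gives no proof but defers to \cite{KaT}, Lemma 2.17, where the standard argument is exactly this self-adjointness computation, so your route matches the intended one.
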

For the proof of Lemma \ref{Mhigh}, see ~\cite{KaT}, Lemma 2.17. Since $Q_{<M}^A = Id - Q_{\ge M}^A$, we also obtain the result for $Q = Q_{<M}^A$.

\begin{prop} \label{modulation}
It holds that 
\EQS{
 &\|Q_M^{K_{\pm}} u\|_{L^2_{t,x}(\R^{1+d})} \lec M^{-1/2}\|u\|_{V^2_{K_{\pm}}},\ \ \ \ \ 
      \|Q_{\ge M}^{K_{\pm}} u\|_{L^2_{t,x}(\R^{1+d})} \lec M^{-1/2}\|u\|_{V^2_{K_{\pm}}}, \label{mod} \\
  &\|Q_{<M}^{K_{\pm}} u\|_{V^2_{K_{\pm}}} \lec \|u\|_{V^2_{K_{\pm}}},\ \ \ \ \ 
      \|Q_{\ge M}^{K_{\pm}} u\|_{V^2_{K_{\pm}}} \lec \|u\|_{V^2_{K_{\pm}}}, \notag \\
  &\|Q_{<M}^{K_{\pm}} u\|_{U^2_{K_{\pm}}} \lec \|u\|_{U^2_{K_{\pm}}},\ \ \ \ \ 
      \|Q_{\ge M}^{K_{\pm}} u\|_{U^2_{K_{\pm}}} \lec \|u\|_{U^2_{K_{\pm}}}. \notag 
}
The same estimates hold by replacing the Klein-Gordon operator $K_{\pm}$ by the wave operator $W_{\pm c}.$  
\end{prop}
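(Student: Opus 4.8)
The plan is to strip off the unitary flow by conjugation and reduce all six bounds to statements about the pure time-frequency multipliers $\til{Q}_M,\til{Q}_{<M},\til{Q}_{\ge M}$ acting on an arbitrary profile $v:=K_{\pm}(-\cdot)u$. Since $K_{\pm}(t)$ is unitary on $L^2_x$ for each $t$ and $Q_M^{K_{\pm}}u=K_{\pm}(\cdot)\til{Q}_M v$, unitarity in the $x$-fibre gives $\|Q_M^{K_{\pm}}u\|_{L^2_{t,x}}=\|\til{Q}_M v\|_{L^2_{t,x}}$; likewise $K_{\pm}(-\cdot)Q_{<M}^{K_{\pm}}u=\til{Q}_{<M}v$ yields $\|Q_{<M}^{K_{\pm}}u\|_{V^2_{K_{\pm}}}=\|\til{Q}_{<M}v\|_{V^2}$ and $\|Q_{<M}^{K_{\pm}}u\|_{U^2_{K_{\pm}}}=\|\til{Q}_{<M}v\|_{U^2}$, and similarly for $\til{Q}_{\ge M}$, while $\|u\|_{V^2_{K_{\pm}}}=\|v\|_{V^2}$ and $\|u\|_{U^2_{K_{\pm}}}=\|v\|_{U^2}$. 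Thus it suffices to prove, for arbitrary $v$, that $\|\til{Q}_M v\|_{L^2_{t,x}}+\|\til{Q}_{\ge M}v\|_{L^2_{t,x}}\lec M^{-1/2}\|v\|_{V^2}$ and that $\til{Q}_{<M},\til{Q}_{\ge M}$ are bounded on $V^2$ and on $U^2$ uniformly in $M$. Each multiplier is convolution in $t$ (tensored with the identity in $x$): $\til{Q}_M$ against $k_M(t)=M\,\F^{-1}[\phi](Mt)$, and $\til{Q}_{<M}$ against an $L^1$ kernel $h_M(t)=M\,\F^{-1}[\Psi](Mt)$ of uniformly bounded $L^1$ mass, where the symbol of $\til{Q}_{<M}$ is $\Psi(\cdot/M)$ for a fixed smooth, compactly supported $\Psi$ with $\Psi(0)=1$, and $\til{Q}_{\ge M}=Id-\til{Q}_{<M}$. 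The treatment for $W_{\pm c}$ is word for word the same, using only that $W_{\pm c}(t)$ is $L^2_x$-unitary.

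The heart of the matter is the $L^2_{t,x}$ bound, the only one carrying the gain $M^{-1/2}$. Because $\phi$ is supported away from the origin we have $\int k_M=\phi(0)=0$, and because $\til{Q}_{\ge M}$ has symbol vanishing at $\ta=0$ its kernel $\delta_0-h_M$ also has vanishing integral; writing either operator in the mean-zero form $Tv(t)=\int_{\R}\kappa(s)\,[v(t-s)-v(t)]\,ds$ (the $\delta_0$ contribution dropping out, $\kappa=k_M$ respectively $\kappa=h_M$) and applying Minkowski's integral inequality in $L^2_{t,x}$ gives $\|Tv\|_{L^2_{t,x}}\le\int_{\R}|\kappa(s)|\,\|v(\cdot-s)-v\|_{L^2_{t,x}}\,ds$. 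The crux is then the difference estimate
\[
 \int_{\R}\|v(t-s)-v(t)\|_{L^2_x}^2\,dt\le|s|\,\|v\|_{V^2}^2 .
\]
To prove it for $s>0$, slice $\R=\bigsqcup_{k}[ks,(k+1)s)$ and use Tonelli to rewrite the left side as $\int_0^s\sum_{k}\|v(t+(k-1)s)-v(t+ks)\|_{L^2_x}^2\,dt$; for each fixed $t\in[0,s)$ finite truncations of the arithmetic sequence $\{t+ks\}_{k}$ are real partitions, so by Proposition \ref{embedding}(i) the inner sum is $\le\|v\|_{V^2}^2$, and integrating over $[0,s)$ closes the estimate. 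Feeding this back and scaling, $\int_{\R}|\kappa(s)|\,|s|^{1/2}\,ds=M^{-1/2}\int_{\R}|\F^{-1}[\phi](u)|\,|u|^{1/2}\,du$ (and the analogue with $\Psi$), which is $\lec M^{-1/2}$ since $\F^{-1}[\phi]$ and $\F^{-1}[\Psi]$ are Schwartz; this gives $\|\til{Q}_M v\|_{L^2_{t,x}}+\|\til{Q}_{\ge M}v\|_{L^2_{t,x}}\lec M^{-1/2}\|v\|_{V^2}$.

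The boundedness statements on $V^2$ and $U^2$ carry no gain and follow softly from the uniform $L^1$ mass of $h_M$ together with the translation invariance of these norms. For $V^2$: for any finite partition $\{t_i\}$, estimating $\|\til{Q}_{<M}v(t_i)-\til{Q}_{<M}v(t_{i-1})\|_{L^2_x}\le\int|h_M(s)|\,\|v(t_i-s)-v(t_{i-1}-s)\|_{L^2_x}\,ds$ and applying Minkowski's inequality in the $\ell^2_i$ norm against $ds$ gives $\big(\sum_i\|\cdots\|^2\big)^{1/2}\le\int|h_M(s)|\big(\sum_i\|v(t_i-s)-v(t_{i-1}-s)\|^2\big)^{1/2}ds\le\|h_M\|_{L^1}\|v\|_{V^2}$, since $\{t_i-s\}_i$ is again a partition; taking the supremum over partitions, and then using $\til{Q}_{\ge M}=Id-\til{Q}_{<M}$, yields the two $V^2$ bounds. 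For $U^2$: time-translation $\mathcal{T}_sv:=v(\cdot-s)$ sends $U^2$-atoms to $U^2$-atoms isometrically, so representing $\til{Q}_{<M}v=\int_{\R}h_M(s)\,\mathcal{T}_sv\,ds$ as a $U^2$-valued integral and pairing with the unit ball of $(U^2)^{\ast}$ gives $\|\til{Q}_{<M}v\|_{U^2}\le\int|h_M(s)|\,\|\mathcal{T}_sv\|_{U^2}\,ds=\|h_M\|_{L^1}\|v\|_{U^2}$, and once more $\til{Q}_{\ge M}=Id-\til{Q}_{<M}$.

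I expect the genuine obstacle to be the $L^2$ estimate and, inside it, the difference bound displayed above: it is what produces the $M^{-1/2}$ modulation gain on which the whole bilinear machinery downstream rests, and it is precisely here that the $L^2$-based $2$-variation (rather than a cheaper $V^1$ bound, which would only put a measure on the right) must be exploited, through the arithmetic-partition rearrangement. By contrast the $V^2$ and $U^2$ boundedness are routine; the only mild technical care there is that the family $\mathcal{T}_s$ is not strongly continuous on $U^2$, so the $U^2$-valued integral must be read weakly (via Hahn--Banach against $(U^2)^{\ast}$), which is standard in the $U^p$--$V^p$ calculus.
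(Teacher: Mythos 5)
The paper offers no proof of Proposition \ref{modulation} at all: it is imported as a known fact (it is, in substance, Corollary 2.18 of \cite{HHK}, transported to the groups $K_{\pm}$ and $W_{\pm c}$ by the conjugation built into the definitions of $U^2_{K_{\pm}}$, $V^2_{K_{\pm}}$ and $Q^{K_{\pm}}_{\cdot}$). So there is no ``paper proof'' to match; judged on its own, your argument is essentially the standard one and its core is sound. The conjugation step is exact, the reduction of both $L^2_{t,x}$ bounds to a mean-zero temporal kernel acting on $v=K_{\pm}(-\cdot)u$ is correct (both $\phi(0)=0$ and $1-\Psi(0)=0$ hold), and the translation-difference estimate $\int_{\R}\|v(t-s)-v(t)\|_{L^2_x}^2\,dt\le |s|\,\|v\|_{V^2}^2$, proved via the arithmetic-progression partitions and Proposition \ref{embedding}(i), is exactly the mechanism that converts $2$-variation into the $M^{-1/2}$ gain; the $V^2$ bound by Minkowski's inequality in $\ell^2$ against the uniformly $L^1$ kernel $h_M$ is likewise correct, since shifted partitions are again partitions.

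The one step you should not leave as written is the $U^2$ bound. The family $\{\mathcal{T}_s v\}_{s\in\R}$ is not essentially separably valued in $U^2$: already for a one-step atom $v=\1_{[0,1)}\phi$ one has $\|\mathcal{T}_s v-\mathcal{T}_{s'}v\|_{U^2}\gec\|\phi\|_{L^2_x}$ for all $s\ne s'$, so no Bochner integral $\int h_M(s)\mathcal{T}_s v\,ds$ exists in $U^2$, and a weak (Pettis-type) reading a priori produces an element of the bidual; you still owe the identification of that object with the pointwise-defined convolution $\til{Q}_{<M}v$ as an element of $U^2$, which is precisely the content being claimed. Two standard repairs are available with tools already in the paper. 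Either argue by duality: by Proposition \ref{U2norm} and the self-adjointness of the temporal multiplier with respect to the pairing (the analogue of Lemma \ref{Mhigh}), $\|\til{Q}_{<M}u\|_{U^2}\lec\sup_{\|z\|_{V^2}=1}|B(u,\til{Q}_{<M}z)|\le\|u\|_{U^2}\sup_{\|z\|_{V^2}=1}\|\til{Q}_{<M}z\|_{V^2}$, which is closed by the $V^2$ bound you have already established. Or argue atomwise: for an atom $a=\sum_k\1_{[t_{k-1},t_k)}\phi_{k-1}$ one has $(\til{Q}_{<M}a)'(t)=\sum_k\bigl(h_M(t-t_{k-1})-h_M(t-t_k)\bigr)\phi_{k-1}\in L^1_tL^2_x$, so $\til{Q}_{<M}a\in V^1_{-,rc}$ and Proposition \ref{U2norm} plus Cauchy--Schwarz in $k$ gives $\|\til{Q}_{<M}a\|_{U^2}\le\|h_M\|_{L^1}$ uniformly; summing an atomic decomposition in the complete space $U^2$ finishes the proof. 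With either substitution your proof is complete.
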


\begin{lem} \label{Recovery}
Let $c > 0, c \neq 1$ and $\ta_3=\ta_1-\ta_2,\ \xi_3=\xi_1-\xi_2.$ 
If $|\xi_1| \gg \LR{\xi_2}$ or $\LR{\xi_1} \ll |\xi_2|,$ then it holds that 
\EQS{ \label{recover}
 \max\big\{ \big|\ta_1 \pm \LR{\xi_1} \big|, \big|\ta_2 \pm \LR{\xi_2} \big|, \big|\ta_3 \pm c|\xi_3| \big|  \big\} 
  \gec \max\{|\xi_1|, |\xi_2|\}.
} 
\end{lem}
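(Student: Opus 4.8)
The plan is to collapse the three individual modulations into a single spatial resonance function and then bound that function from below using the frequency separation together with $c\neq 1$. Fix arbitrary signs $\epsilon_1,\epsilon_2,\epsilon_3\in\{+1,-1\}$ and write the three modulations as $M_1=\tau_1+\epsilon_1\langle\xi_1\rangle$, $M_2=\tau_2+\epsilon_2\langle\xi_2\rangle$, $M_3=\tau_3+\epsilon_3 c|\xi_3|$. The first move is to exploit $\tau_3=\tau_1-\tau_2$: in the combination $M_1-M_2-M_3$ every $\tau$-variable cancels, leaving the purely spatial quantity
\[
 h:=M_1-M_2-M_3=\epsilon_1\langle\xi_1\rangle-\epsilon_2\langle\xi_2\rangle-\epsilon_3 c|\xi_3|.
\]
By the triangle inequality $|M_1|+|M_2|+|M_3|\ge|h|$, so $\max\{|M_1|,|M_2|,|M_3|\}\ge\tfrac{1}{3}|h|$, and it suffices to prove $|h|\gtrsim\max\{|\xi_1|,|\xi_2|\}$ in each regime.

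Next I would treat the regime $|\xi_1|\gg\langle\xi_2\rangle$. Here $|\xi_1|\gg1$ and $|\xi_1|\gg|\xi_2|$, so $\max\{|\xi_1|,|\xi_2|\}=|\xi_1|$, and the routine approximations $\langle\xi_1\rangle=|\xi_1|+O(1)$ and $|\xi_3|=|\xi_1-\xi_2|=|\xi_1|+O(\langle\xi_2\rangle)$ give
\[
 h=(\epsilon_1-\epsilon_3 c)\,|\xi_1|+O(\langle\xi_2\rangle).
\]
The crucial observation is that the leading coefficient satisfies $|\epsilon_1-\epsilon_3 c|\ge\delta:=\min\{|1-c|,\,1+c\}>0$, since $c>0$ and $c\neq1$ forbid any of the four sign combinations from annihilating it. Choosing the implicit constant in the hypothesis ``$\gg$'' large enough (depending only on $c$) that the error $O(\langle\xi_2\rangle)$ is dominated by $\tfrac{\delta}{2}|\xi_1|$, I obtain $|h|\ge\tfrac{\delta}{2}|\xi_1|\gtrsim\max\{|\xi_1|,|\xi_2|\}$. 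The regime $\langle\xi_1\rangle\ll|\xi_2|$ is symmetric: now $|\xi_2|$ dominates, $\langle\xi_2\rangle=|\xi_2|+O(1)$ and $|\xi_3|=|\xi_2|+O(\langle\xi_1\rangle)$, so the leading part of $h$ is $-(\epsilon_2+\epsilon_3 c)|\xi_2|$, whose coefficient is again bounded below by $\delta>0$, and the same absorption argument closes the estimate.

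The hard part — indeed the only essential input — is the transversality encoded by $c\neq1$, which is exactly what keeps $|\epsilon_1-\epsilon_3 c|$ and $|\epsilon_2+\epsilon_3 c|$ away from zero. If $c=1$, the sign choice aligning the Klein--Gordon cone with the wave cone would make the leading coefficient vanish, and then $h$ could be as small as $O(\langle\xi_2\rangle)$ (resp. $O(\langle\xi_1\rangle)$), which would destroy the claimed lower bound. Everything else is elementary bookkeeping of the relations $\langle\xi\rangle\approx|\xi|$ and $|\xi_3|\approx\max\{|\xi_1|,|\xi_2|\}$ in the separated regime, with the large constant hidden in ``$\gg$'' absorbing all lower-order contributions.
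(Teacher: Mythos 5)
Your proof is correct and follows essentially the same route as the paper: both cancel the time variables via the combination $M_1-M_2-M_3$, which leaves the purely spatial resonance quantity, and then bound it below using the frequency separation together with $c\neq 1$. Your uniform treatment of the sign choices through the coefficient $|\epsilon_1-\epsilon_3 c|\ge |1-c|>0$ is just a more systematic packaging of the paper's explicit case analysis ($0<c<1$ versus $c>1$ with constants $\e_c$, $\til{\e}_c$), so the two arguments are the same in substance.
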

\begin{proof}
We only prove the case $|\xi_1| \gg \LR{\xi_2}$ since the case $\LR{\xi_1} \ll |\xi_2|$ is proved by the same manner.  
\EQS{
 (\text{l.h.s.}) \gec \bigl| \bigl(\ta_1 \pm \bigl(1+|\xi_1|)\bigr) - \bigl(\ta_2 \pm (1+|\xi_2|)\bigr) - (\ta_3 \pm c|\xi_3|)\bigr|     \label{recov}
}
If $0 < c < 1$, then we take $\e_c$ such that $0 < \e_c < (1-c)/(1+c), |\xi_2| \le \e_c |\xi_1|$.  
Then, the right hand side of \eqref{recov} is bounded by        
\EQQS{
 (1+|\xi_1|) - (1+|\xi_2|) - c|\xi_1 - \xi_2| 
   \ge |\xi_1| - \e_c |\xi_1| - c(1+\e_c)|\xi_1|   
   \gec |\xi_1|.
}
If $c > 1$, then we take $\til{\e}_c$ such that $0 < \til{\e}_c < (c-1)/(c+3), |\xi_2| \le \til{\e}_c |\xi_1|, 
|\xi_1| \ge 1/\til{\e}_c$. 
Then, the right hand side of \eqref{recov} is bounded by 
\EQQS{
 c|\xi_1 - \xi_2| - (1+|\xi_1|) - (1+|\xi_2|) 
   \ge c(1-\til{\e}_c)|\xi_1| - (1+\til{\e}_c)|\xi_1| - 2\til{\e}_c|\xi_1|  
   \gec |\xi_1|.
}   
\end{proof}
\begin{rem} \label{remconst1}
From \eqref{mod} and \eqref{recover}, we can obtain a half derivative.   
\end{rem}

\begin{lem}  \label{tri}
Let $\til{u}_{N_1}:=\1_{[0,T)}P_{N_1} u, \til{v}_{N_2}:=\1_{[0,T)}P_{N_2} v, \til{n}_{N_3}:=\1_{[0,T)}P_{N_3} n, 
Q_1, Q_2 \in \{ Q_{<M}^{K_{\pm}}, Q_{\ge M}^{K_{\pm}}\}, Q_3 \in \{ Q_{<M}^{W_{\pm c}}, Q_{\ge M}^{W_{\pm c}} \}$. Let $s = s_c = d/2-2$. Then the following estimates hold for all $0<T<\I$ : \\ 
 (i) If $N_3 \lec N_2 \sim N_1$, then  
\EQQS{
 |I_1| := \Bigl|\int_{\R^{1+d}}(\om_1^{-1}\til{u}_{N_1})(\ol{\om_1^{-1}\til{v}_{N_2}})(\ol{\om \til{n}_{N_3}})dxdt\Bigr| 
  \lec N_3^{s}\|u_{N_1}\|_{V^2_{K_{\pm}}}\|v_{N_2}\|_{V^2_{K_{\pm}}} \|n_{N_3}\|_{V^2_{W_{\pm c}}}. 
}
 (ii) It holds that 
\EQQS{
 |I_2| := \Bigl|\int_{\R^{1+d}} \til{n} (\om_1^{-1}\til{v})(\ol{P_{<1}\til{u}})dxdt\Bigr|
                       \lec \|n\|_{\dot{Y}^s_{W_{\pm c}}}\|v\|_{Y^s_{K_{\pm}}}\|P_{<1}u\|_{V^2_{K_{\pm}}}. 
}
 (iii) If $N_1 \sim N_2$, then  
\EQQS{
  |I_3| := \Bigl|\int_{\R^{1+d}}\Bigl(\sum_{N_3 \lec N_2}\til{n}_{N_3}\Bigr)(\om_1^{-1}\til{v}_{N_2})\ol{\til{u}_{N_1}}dxdt\Bigr|
                   \lec \|n\|_{\dot{Y}^s_{W_{\pm c}}}\|v_{N_2}\|_{V^2_{K_{\pm}}}\|u_{N_1}\|_{V^2_{K_{\pm}}}.  
}
 (iv) If $N_1 \sim N_3, N_1 \gg 1, M=\e N_1$ and $\e > 0$ is sufficiently small, then   
\EQQS{
   |I_i| \lec \|n_{N_3}\|_{V^2_{W_{\pm c}}} \|v\|_{Y^s_{K_{\pm}}} \|u_{N_1}\|_{V^2_{K_{\pm}}}, \quad (i=4,5)
} 
where 
\EQQS{ 
 &I_4 := \int_{\R^{1+d}}(Q_{\ge M}^{W_{\pm c}}\til{n}_{N_3})\Bigl(\sum_{N_2 \ll N_1}Q_2 \om_1^{-1}\til{v}_{N_2}\Bigr)
                      (\ol{Q_1 \til{u}_{N_1}})dxdt, \\
 &I_5 := \int_{\R^{1+d}}(Q_3 \til{n}_{N_3})\Bigl(\sum_{N_2 \ll N_1}Q_2 \om_1^{-1}\til{v}_{N_2}\Bigr)
                      (\ol{Q_{\ge M}^{K_{\pm}} \til{u}_{N_1}})dxdt.
}
\end{lem}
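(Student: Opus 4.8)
The plan is to treat the four statements by two distinct mechanisms: a pure Strichartz mechanism for (i)--(iii), where the frequency configuration forbids any modulation gain, and the recovery/modulation mechanism for (iv). In every case I would view $I_j$ as a space-time trilinear form, apply H\"older in $(t,x)$ to distribute the three factors into Strichartz-admissible Lebesgue norms, remove the derivative loss $W^{-s,r}$ produced by Proposition \ref{Str} via Bernstein's inequality (paying a positive power of the frequency carrying the projection), and then bound each factor by a $V^2_{K_{\pm}}$ or $V^2_{W_{\pm c}}$ norm through Proposition \ref{Str} with $p=2<q$ (using $V^2\sub V^q$), choosing the admissible pair $(q,r)$ away from the forbidden endpoint $(2,2(d-1)/(d-3))$. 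Because $s=s_c=d/2-2$ is critical, the accumulated powers of $N_1,N_2,N_3$ arising from $\om_1^{-1}\sim N^{-1}$, $\om\sim N$, the Bernstein gains, and the Strichartz losses should cancel into exactly the claimed weight.

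For (i) the frequencies are descending or comparable ($N_3\lec N_2\sim N_1$), so this is a single dyadic estimate: distribute the two Klein-Gordon factors and the wave factor into balanced Strichartz norms and Bernstein on the lowest frequency $N_3$; the gain $N_3\lec N_1$ together with the weight $N_1^{-1}N_2^{-1}N_3$ yields $N_3^{s}$. For (ii) the output $P_{<1}u$ has unit frequency, which forces $N_2\sim N_3$ in the dyadic pieces of $n$ and $v$; I would place $P_{<1}u$ in a fixed norm ($L^\I_t L^2_x$ via Remark \ref{embed}, or an $L^{r}_{t,x}$ Strichartz norm), estimate the high pieces $n_{N_3},v_{N_2}$ by Strichartz, and sum over $N_2\sim N_3$ by Cauchy-Schwarz. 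Since $s\ge0$ weights only enlarge the right-hand side, the resulting $Y^0\times\dot Y^0$ bound is dominated by $\|v\|_{Y^s}\|n\|_{\dot Y^s}$, so no derivative gain is needed.

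The most delicate point is (iii) and its logarithm. Here $N_1\sim N_2$ with $n$ ranging over $N_3\lec N_2$, and the right-hand side charges $v_{N_2},u_{N_1}$ only in $V^2$ while $n$ is measured in $\dot Y^s$; a naive balanced estimate produces $\sum_{N_3\lec N_2}N_3^{s}\|n_{N_3}\|$, which Cauchy-Schwarz converts into $\|n\|_{\dot Y^s}$ only at the cost of $(\log N_2)^{1/2}$. To remove it I would put the low-frequency factor $n_{N_3}$ in a Strichartz norm with large spatial exponent, so that Bernstein produces a surplus $N_3^{s+\de}$ with $\de>0$; the dyadic sum then converges geometrically and the compensating $N_2^{-\de}$ is absorbed by the $N_1\sim N_2$ weights. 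The slack needed for this choice of exponents is exactly what $d\ge5$ provides, and the forbidden endpoint is avoided.

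Finally, (iv) is the genuinely dispersive case. Since $N_1\sim N_3\gg1$ and $N_2\ll N_1$, the Klein-Gordon frequencies $\xi_1$ (the $u$-frequency) and $\xi_2$ (the $v$-frequency) obey $|\xi_1|\gg\LR{\xi_2}$, so Lemma \ref{Recovery} forces $\max\{|\ta_1\pm\LR{\xi_1}|,|\ta_2\pm\LR{\xi_2}|,|\ta_3\pm c|\xi_3||\}\gec N_1$; with $M=\e N_1$ and $\e$ small this guarantees that the high-modulation projection present in $I_4$ (on $n$) and in $I_5$ (on $u$) carries the interaction. I would then apply the $L^2_{t,x}$ modulation estimate $\|Q_{\ge M}\cdot\|_{L^2_{t,x}}\lec M^{-1/2}\|\cdot\|_{V^2}$ of Proposition \ref{modulation} to that factor, gaining $M^{-1/2}\sim N_1^{-1/2}$ -- precisely the half-derivative dictated by criticality -- place it in $L^2_{t,x}$, and estimate the product of the remaining two factors in $L^2_{t,x}$ by H\"older and Strichartz, exploiting $N_2\ll N_1$ and Bernstein on the low $v$-factor (the generic projections $Q_1,Q_2,Q_3$ are discarded, being bounded on $V^2$ by Proposition \ref{modulation}). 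The main obstacle throughout is the simultaneous bookkeeping: choosing admissible exponents that satisfy the H\"older relations and, in (iii)--(iv), deliver a strictly positive surplus power of the small frequency so that the dyadic sums close without logarithmic loss, uniformly for $d\ge5$.
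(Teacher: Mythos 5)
Your architecture agrees with the paper's on the two main mechanisms: for (i)--(iii) the paper indeed uses only H\"older with the exponents $L^{2(d+1)/(d-1)}_{t,x}\times L^{2(d+1)/(d-1)}_{t,x}\times L^{(d+1)/2}_{t,x}$ plus Proposition \ref{Str} (Sobolev on the wave factor, giving $N_3^{s_c+1}$ there and $\LR{N_i}^{-1/2}$ on each Klein--Gordon factor after $\om_1^{-1}$), and for (iv) it isolates the $Q_{\ge M}$ factor in $L^2_{t,x}$, gains $M^{-1/2}\sim N_1^{-1/2}$ from Proposition \ref{modulation}, and puts the other two factors in $L^{d+1}_{t,x}$ and $L^{2(d+1)/(d-1)}_{t,x}$ --- exactly the half-derivative balance you describe. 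Two small remarks: Lemma \ref{Recovery} is not needed inside the lemma at all (the high modulation is already built into $I_4,I_5$; the recovery lemma only enters Proposition \ref{BE} to kill the all-low-modulation term), and no bilinear Strichartz estimate appears here either.

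The genuine divergence, and the soft spot of your plan, is how the dyadic sums over the low-frequency factor in (ii)--(iv) are closed. The paper never manufactures a surplus power $N_3^{s+\de}$: it keeps $\sum_{N_3\lec N_2}\til{n}_{N_3}$ (resp.\ $\sum_{N_2\ll N_1}\til{v}_{N_2}$) together as a single function, bounds its $L^{(d+1)/2}_{t,x}$ (resp.\ $L^{d+1}_{t,x}$) norm by $\| |\na_x|^{s_c}\sum_{N_3\lec N_2}\til{n}_{N_3}\|_{V^2_{W_{\pm c}}}$, and then uses $L^2_x$-orthogonality of the Littlewood--Paley pieces inside the $V^2$ norm (\eqref{ortho}--\eqref{ortho2}, \eqref{I4C1}--\eqref{I4c}) to recover the full $\dot{Y}^{s_c}$ (resp.\ $Y^{s_c}$) norm with no loss whatsoever. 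Your alternative first misjudges the obstruction in (iii): the decomposition of $n$ is homogeneous, so $N_3$ runs over all of $2^{\Z}$ down to $0$ and the naive Cauchy--Schwarz count $\#\{N_3\lec N_2\}$ is infinite, not logarithmic; a strictly positive surplus power of $N_3$ is therefore mandatory, not a refinement. Second, producing that surplus is not a matter of ``Bernstein on the low factor'': the total power of $N_3$ obtained by landing in the fixed space $L^{(d+1)/2}_{t,x}$ is pinned at $s_c$ by scaling, so you must move the wave factor to a strictly larger space-time exponent and push the two Klein--Gordon factors off the diagonal pair $L^{2(d+1)/(d-1)}_{t,x}$; since Bernstein only increases Lebesgue exponents and H\"older in time on $[0,T)$ would introduce the forbidden $T$-dependence (Remark \ref{remconstbe}), this forces genuinely off-diagonal admissible pairs, and at $d=5$ the compatible perturbations of the exponents degenerate to a single ray. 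This can be made to work, but it is delicate and you have not verified it; the orthogonality argument sidesteps the entire issue, and I would adopt it.
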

\begin{proof}
We show $(i)$ first.  
For $f \in V^2_A,\ A \in \{ K_{\pm}, W_{\pm c} \}$, we see 
\EQS{  \label{c}
 \|\1_{[0,T)}f\|_{V^2_A} \lec \|f\|_{V^2_A}.
}
For $d \ge 5$, we apply the H\"{o}lder inequality to have  
\EQS{
 |I_1| 
  \lec \|\om_1^{-1}\til{u}_{N_1}\|_{L^{2(d+1)/(d-1)}_{t,x}} \|\om_1^{-1}\til{v}_{N_2}\|_{L^{2(d+1)/(d-1)}_{t,x}}
                         \|\om \til{n}_{N_3}\|_{L^{(d+1)/2}_{t,x}}.                                             \label{5dn} 
}
We apply Proposition \ref{Str}, \eqref{c} and the Sobolev inequality, then we have 
\EQS{
  &\|\om_1^{-1}\til{f}_N\|_{L^{2(d+1)/(d-1)}_{t,x}}
      \lec \LR{N}^{1/2-1} \|f_N\|_{V^2_{K_{\pm}}} 
        =     \LR{N}^{-1/2} \|f_N\|_{V^2_{K_{\pm}}},                                                        \label{5dn2} \\
  &\|\om \til{n}_{N_3}\|_{L^{(d+1)/2}_{t,x}} 
      \lec \| |\na_x|^{d(d-5)/2(d-1)}\om \til{n}_{N_3}\|_{L^{(d+1)/2}_t L^{2(d^2-1)/(d^2-9)}_x}  \notag \\
      &\qquad \qquad \qquad \lec \| |\na_x|^{d/2-2}\om \til{n}_{N_3}\|_{V^2_{W_{\pm c}}}                                     \label{5dn3} \\
      &\qquad \qquad \qquad \lec N_3^{s_c + 1}\|n_{N_3}\|_{V^2_{W_{\pm c}}}                                                    \label{5dn4} 
}
Collecting \eqref{5dn}, \eqref{5dn2}, \eqref{5dn4} and $N_3 \lec N_1 \sim N_2$, we obtain 
\EQQS{
 |I_1| \lec N_3^{s_c} \|u_{N_1}\|_{V^2_{K_{\pm}}} \|v_{N_2}\|_{V^2_{K_{\pm}}} \|n_{N_3}\|_{V^2_{W_{\pm c}}}.     
}
Next, we prove $(ii)$. 
For $d \ge 5$, by the H\"{o}lder inequality to have 
\EQS{
 |I_2| \lec \|\til{n}\|_{L^{(d+1)/2}_{t,x}} \|\om_1^{-1}\til{v}\|_{L^{2(d+1)/(d-1)}_{t,x}}
                 \| P_{<1}\til{u}\|_{L^{2(d+1)/(d-1)}_{t,x}}.                                                                      \label{5di2a}
}
From Proposition \ref{Str}, \eqref{5dn3}, Remark \ref{estx} and Lemma \ref{estX}, we obtain 
\EQS{ 
 &\|\til{n}\|_{L^{(d+1)/2}_{t,x}} \lec \|n\|_{\dot{Y}^{s_c}_{W_{\pm c}}},                                                     \label{5di2b} \\
 &\|\om_1^{-1}\til{v}\|_{L^{2(d+1)/(d-1)}_{t,x}} \lec \| \LR{\na_x}^{-1/2} v\|_{V^2_{K_{\pm}}} 
                                                         \lec \|\LR{\na_x}^{s_c}v\|_{V^2_{K_{\pm}}}  
                                                         \lec \|v\|_{Y^{s_c}_{K_{\pm}}},                                           \label{5di2c} \\
 &\|P_{<1}\til{u}\|_{L^{2(d+1)/(d-1)}_{t,x}} \lec \|\LR{\na_x}^{1/2} P_{<1}u\|_{V^2_{K_{\pm}}}
                                                   \lec \|P_{<1}u\|_{V^2_{K_{\pm}}}.                                              \label{5di2d} 
}
Collecting \eqref{5di2a}--\eqref{5di2d}, we obtain 
\EQQS{
 |I_2| \lec \|n\|_{\dot{Y}^{s_c}_{W_{\pm c}}} \|v\|_{Y^{s_c}_{K_{\pm}}} \|P_{<1}u\|_{V^2_{K_{\pm}}}.
}
We prove $(iii)$ for $d \ge 5$. 
We apply the H\"{o}lder inequality to have  
\EQS{
 |I_3|  
   \lec \Bigl{\|}\sum_{N_3 \lec N_2} \til{n}_{N_3}\Bigr{\|}_{L^{(d+1)/2}_{t,x}} \|\om_1^{-1}\til{v}_{N_2}\|_{L^{2(d+1)/(d-1)}_{t,x}}
              \| \til{u}_{N_1}\|_{L^{2(d+1)/(d-1)}_{t,x}}.                                                          \label{5dn7} 
}
Similar to \eqref{5dn3}, the Sobolev inequality and Proposition \ref{Str}, we have 
\EQS{
 \Bigl{\|}\sum_{N_3 \lec N_2} \til{n}_{N_3}\Bigr{\|}_{L^{(d+1)/2}_{t,x}} 
   \lec \Bigl{\|} |\na_x|^{s_c} \sum_{N_3 \lec N_2} \til{n}_{N_3}\Bigr{\|}_{V^2_{W_{\pm c}}}.     \label{ortho}
}
By the $L^2_x$ orthogonality, we obtain 
\EQS{
 \Bigl{\|} |\na_x|^{s_c} \sum_{N_3 \lec N_2} \til{n}_{N_3}\Bigr{\|}_{V^2_{W_{\pm c}}}^2 
 &\lec \sup_{\{t_i\}_{i=0}^I \in \mathcal{Z}} \sum_{i=1}^I \sum_N N^{2s_c} 
                \Bigl{\|} P_N \Bigl{\{} W_{\pm c}(-t_i)\Bigl(\sum_{N_3 \lec N_2}\til{n}_{N_3}(t_i)\Bigr) \notag \\ 
 &\qquad \qquad  - W_{\pm c}(-t_{i-1})\Bigl(\sum_{N_3 \lec N_2} \til{n}_{N_3}(t_{i-1})\Bigr)\Bigr{\}} \Bigr{\|}_{L^2_x}^2.     \label{ortho1}
} 
Since $P_N \til{n}_{N_3} = 0$ if $N_3 > 2N$ or $N_3 < N/2$ and $P_N$ is projection, the right-hand side is bounded by 
\EQS{
 &\sup_{\{t_i\}_{i=0}^I \in \mathcal{Z}} \sum_{i=1}^I \sum_N N^{2s_c} 
                \|W_{\pm c}(-t_i) P_N \til{n}(t_i) - W_{\pm c}(-t_{i-1}) P_N \til{n}(t_{i-1}) \|_{L^2_x}^2 \notag \\
  &\lec \sum_N N^{2s_c} \sup_{\{t_i\}_{i=0}^I \in \mathcal{Z}} 
                \|W_{\pm c}(-t_i) P_N \til{n}(t_i) - W_{\pm c}(-t_{i-1}) P_N \til{n}(t_{i-1}) \|_{L^2_x}^2 \notag \\
  &\lec \|n\|_{\dot{Y}^{s_c}_{W_{\pm c}}}^2.                                                                          \label{ortho2}
}
Hence, from \eqref{5dn7}--\eqref{ortho2}, \eqref{5dn2} and $N_1 \sim N_2$, we have 
\EQQS{
 |I_3|  
   &\lec \|n\|_{\dot{Y}^{s_c}_{W_{\pm c}}} \LR{N_2}^{-1/2}\|v_{N_2}\|_{V^2_{K_{\pm}}} \LR{N_1}^{1/2}\|u_{N_1}\|_{V^2_{K_{\pm}}} \\
   &\lec \|n\|_{\dot{Y}^{s_c}_{W_{\pm c}}} \|v_{N_2}\|_{V^2_{K_{\pm}}} \|u_{N_1}\|_{V^2_{K_{\pm}}}.    
}
We prove $(iv)$. 
The estimate for $I_5$ is obtained by the same manner as the estimate for $I_4$, so we only estimate $I_4$. 
We apply the H\"{o}lder inequality to have  
\EQS{
 |I_4|   
  \lec \|Q_{\ge M}^{W_{\pm c}}\til{n}_{N_3}\|_{L^2_{t,x}}\Bigl{\|}\sum_{N_2 \ll N_1}Q_2 \om_1^{-1}\til{v}_{N_2}\Bigr{\|}_{L^{d+1}_{t,x}} 
             \|Q_1 \til{u}_{N_1}\|_{L^{2(d+1)/(d-1)}_{t,x}}.                                                               \label{I4}
}
By Proposition \ref{modulation}, \eqref{5dn2} and \eqref{c}, we have 
\EQS{
 &\|Q_{\ge M}^{W_{\pm c}}\til{n}_{N_3}\|_{L^2_{t,x}} \lec N_1^{-1/2}\|n_{N_3}\|_{V^2_{W_{\pm c}}},                           \label{I4a} \\
 &\|Q_1 \til{u}_{N_1}\|_{L^{2(d+1)/(d-1)}_{t,x}} \lec \LR{N_1}^{1/2} \|u_{N_1}\|_{V^2_{K_{\pm}}}.              \label{I4b} 
}
We apply the Sobolev inequality, Proposition \ref{Str}, Proposition \ref{modulation} and \eqref{c}, we have
\EQS{
 \Bigl{\|}\sum_{N_2 \ll N_1}Q_2 \om_1^{-1}\til{v}_{N_2}\Bigr{\|}_{L^{d+1}_{t,x}} 
  &\lec \Bigl{\|} \LR{\na_x}^{d(d-3)/2(d-1)}\sum_{N_2 \ll N_1}Q_2 \om_1^{-1}
               \til{v}_{N_2}\Bigr{\|}_{L^{d+1}_t L^{2(d^2-1)/(d^2-5)}_x}  \notag \\
  &\lec \Bigl{\|} \LR{\na_x}^{d(d-3)/2(d-1)+1/(d-1)-1}\sum_{N_2 \ll N_1}\til{v}_{N_2}\Bigr{\|}_{V^2_{K_{\pm}}}. \label{I4C1}
}
Similar to \eqref{ortho1} and \eqref{ortho2}, we have 
\EQS{
 \Bigl{\|} \LR{\na_x}^{d(d-3)/2(d-1)+1/(d-1)-1}\sum_{N_2 \ll N_1}\til{v}_{N_2}\Bigr{\|}_{V^2_{K_{\pm}}}  
   \lec \|v\|_{Y^{s_c}_{K_{\pm}}}.                                                                                        \label{I4c}
}
Collecting \eqref{I4}--\eqref{I4c} and $N_1 \gg 1$, we obtain  
\EQQS{
 |I_4| \lec \|n_{N_3}\|_{V^2_{W_{\pm c}}} \|v\|_{Y^{s_c}_{K_{\pm}}} \|u_{N_1}\|_{V^2_{K_{\pm}}}. 
} 
\end{proof}

The following proposition is in ~\cite{Sc}, Proposition 10.   
\begin{prop} ($L^4$ Strichartz estimate) 
For all dyadic numbers $H \ge 1$ and $N$, it holds that   
\EQQS{
 \| W_{\pm c}(t) P_N \phi \|_{L^4_{t,x}} \lec N^{(d-1)/4}\|P_N \phi\|_{L^2_x},\qquad  \| K_{\pm}(t) P_H \varphi \|_{L^4_{t,x}} \lec H^{(d-1)/4}\|P_H \varphi\|_{L^2_x}.
}
\end{prop}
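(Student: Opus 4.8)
The plan is to deduce the $L^4_{t,x}$ bound directly from the frequency‑localized Strichartz estimates already recorded in Propositions \ref{Strich-w} and \ref{Strich-kg}, repairing the gap between the Strichartz spatial exponent and $L^4_x$ by Bernstein's inequality. Concretely, I would specialize the admissible relation $2/q=(d-1)(1/2-1/r)$ to the temporal exponent $q=4$, which forces $1/r=(d-2)/(2(d-1))$, i.e. $r=2(d-1)/(d-2)$, together with $s=1/q-1/r+1/2=3/4-(d-2)/(2(d-1))$. This pair is genuinely admissible for every $d\ge 3$: one has $2\le r<\I$ and $q=4\neq 2$, so the forbidden endpoint $(q,r)=(2,2(d-1)/(d-3))$ is avoided, and $s>0$.

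For the wave propagator, applying Proposition \ref{Strich-w} to $f=P_N\phi$ and using that $|\na|^{-s}$ acts as the scalar $\sim N^{-s}$ on frequencies of size $N$ gives
\[
\|W_{\pm c}(t)P_N\phi\|_{L^4_t L^r_x}\lec N^{s}\|P_N\phi\|_{L^2_x}.
\]
Since $r\le 4$, Bernstein's inequality in $x$ (applied for each fixed $t$ and then integrated in $L^4_t$, which is legitimate precisely because the temporal exponent is $q=4$) yields
\[
\|W_{\pm c}(t)P_N\phi\|_{L^4_{t,x}}\lec N^{d(1/r-1/4)}\|W_{\pm c}(t)P_N\phi\|_{L^4_t L^r_x}\lec N^{s+d(1/r-1/4)}\|P_N\phi\|_{L^2_x}.
\]
It then remains to check the total exponent: with the above values one computes $s+d(1/r-1/4)=\frac{(d-1)^2}{4(d-1)}=\frac{d-1}{4}$, exactly the claimed power. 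Since both ingredients are scale‑homogeneous, this settles the wave estimate for every dyadic $N$.

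For the Klein–Gordon propagator the argument is identical, now invoking Proposition \ref{Strich-kg}; the only new point is that the hypothesis $H\ge 1$ is precisely what lets us replace the inhomogeneous weight $\LR{\na_x}^{-s}$ appearing in $W^{-s,r}_x$ by the scalar $\sim H^{-s}$ on the support of $P_H$. After this replacement the same Strichartz‑plus‑Bernstein chain produces the factor $H^{(d-1)/4}$.

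There is no serious analytic obstacle along this route; the only care required is the bookkeeping of the two exponents $s$ and $d(1/r-1/4)$ and the admissibility check of the chosen pair. I note that the estimate can alternatively be proved in a self‑contained, genuinely bilinear way by writing $\|W_{\pm c}(t)P_N\phi\|_{L^4_{t,x}}^2=\|(W_{\pm c}(t)P_N\phi)^2\|_{L^2_{t,x}}$, applying Plancherel in $(t,x)$, and using Cauchy–Schwarz against the measure of the interaction set $\{(\ta_1,\x_1):\ \ta_1=\pm c|\x_1|,\ \ta-\ta_1=\pm c|\x-\x_1|,\ |\x_1|\sim|\x-\x_1|\sim N\}$. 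The hard part there is that the induced surface measure carries a Jacobian $1/|\na_{\x_1}(\pm c|\x_1|\pm c|\x-\x_1|)|$ that degenerates at collinear frequencies, so one must quantify this degeneration to recover the clean bound $N^{(d-1)/2}$; the Strichartz‑plus‑Bernstein argument sidesteps this delicacy entirely, which is why I would take that route.
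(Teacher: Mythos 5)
The paper itself offers no proof of this proposition --- it is quoted from \cite{Sc}, Proposition 10 --- so the only question is whether your self-contained derivation is sound, and it is. Specializing the admissibility relation to $q=4$ forces $r=2(d-1)/(d-2)$, which indeed satisfies $2\le r\le 4$ for all $d\ge 3$ and avoids the single forbidden endpoint $(q,r)=(2,2(d-1)/(d-3))$ since $q=4$; with $s=1/q-1/r+1/2=3/4-1/r$ the exponent bookkeeping closes correctly, as
\begin{equation*}
 s+d\Bigl(\frac1r-\frac14\Bigr)=\frac34-\frac d4+\frac{d-1}{r}=\frac34-\frac d4+\frac{d-2}{2}=\frac{d-1}{4}.
\end{equation*}
The two auxiliary steps are also legitimate: replacing $|\na_x|^{-s}$ by $N^{-s}$ (resp. $\LR{\na_x}^{-s}$ by $H^{-s}$, where the hypothesis $H\ge 1$ is exactly what is needed) on dyadically localized functions is a standard multiplier bound, and applying Bernstein at each fixed $t$ before taking the $L^4_t$ norm is valid precisely because the temporal exponent is unchanged. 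Compared with the paper's choice to import the result from \cite{Sc}, your route has the merit of using only ingredients already present (Propositions \ref{Strich-w} and \ref{Strich-kg} together with Bernstein and the scale/frequency homogeneity of the dyadic projections), so it makes the argument self-contained; the bilinear Plancherel--Cauchy--Schwarz alternative you mention is genuinely more delicate because of the degeneration of the Jacobian at collinear frequencies, and you are right to sidestep it.
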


From Proposition \ref{mlinear} and the above proposition, we obtain the following.

\begin{prop}   \label{u4str}
For dyadic numbers $H \ge 1$ and $N$, it holds that 
\EQQS{
 \|u_N\|_{L^4_{t,x}} \lec N^{(d-1)/4} \|u_N\|_{U^4_{W_{\pm c}}},\qquad \|v_H\|_{L^4_{t,x}} \lec H^{(d-1)/4} \|v_H\|_{U^4_{K_{\pm}}}.   
}
\end{prop}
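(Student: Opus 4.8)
The plan is to deduce Proposition \ref{u4str} from the $L^4$ Strichartz estimate of \cite{Sc} (stated immediately above) by the transference principle of Proposition \ref{mlinear}, applied in the linear case $n=1$ with exponents $p=q=4$. The entire content lies in the cited linear estimate; the transference machinery does the rest, so no new analysis is needed.

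First I would treat the wave part. I take $T_0$ to be the single linear operator $T_0\phi := P_N\phi$, mapping $L^2_x$ into $L^1_{loc}(\R^d)$. Since $P_N$ commutes with $W_{\pm c}(t)$, for a free wave solution we have $T_0(W_{\pm c}(t)\phi) = W_{\pm c}(t)P_N\phi$, so the cited $L^4$ estimate together with $\|P_N\phi\|_{L^2_x}\le\|\phi\|_{L^2_x}$ gives
\[
 \|T_0(W_{\pm c}(t)\phi)\|_{L^4_t(\R;L^4_x(\R^d))} \lec N^{(d-1)/4}\|\phi\|_{L^2_x},
\]
which is exactly the hypothesis of Proposition \ref{mlinear} (in its wave version) with the fixed constant $N^{(d-1)/4}$ absorbed into $\lec$. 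Proposition \ref{mlinear} then produces an operator $T:U^4_{W_{\pm c}}\to L^4_{t,x}(\R^{1+d})$ with $\|T(u)\|_{L^4_{t,x}}\lec N^{(d-1)/4}\|u\|_{U^4_{W_{\pm c}}}$ and $T(u)(t)(x)=P_N u(t)(x)$ a.e. Specializing to $u=u_N$ and using the frequency localization $u_N=P_N u_N$, I get $T(u_N)=u_N$, hence the claimed bound.

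The Klein-Gordon part is identical after replacing $W_{\pm c}$ by $K_{\pm}$, $N$ by $H$, and $P_N$ by $P_H$, now invoking Proposition \ref{mlinear} directly since it is stated for $K_{\pm}$; the restriction $H\ge 1$ matches the hypothesis of the $L^4$ estimate for $K_{\pm}$.

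There is no genuine analytic obstacle; the only points requiring a word of care are (i) that Proposition \ref{mlinear}, although written for the Klein-Gordon group $K_{\pm}$, applies verbatim to the wave group $W_{\pm c}$, since its proof uses only unitarity of the underlying group and the atomic structure of $U^p$; and (ii) that the frequency projection must be folded into $T_0$ so that the $N$-dependent (resp.\ $H$-dependent) constant is carried correctly through the transference, with the localization identity $P_N u_N=u_N$ (resp.\ $P_H v_H=v_H$) removing the projection at the end.
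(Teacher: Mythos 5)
Your proposal is correct and matches the paper's intended argument: the paper derives Proposition \ref{u4str} simply by combining the $L^4$ Strichartz estimate of \cite{Sc} with the transference principle of Proposition \ref{mlinear}, exactly as you do. Your additional remarks on folding $P_N$ into $T_0$ and on the applicability of Proposition \ref{mlinear} to $W_{\pm c}$ are just the routine details the paper leaves implicit.
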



\begin{prop}   \label{x01/2}
Let $u_M, v_N \in L^2(\R^{1+d})$ be such that 
\EQQS{
 \supp \F u_M \subset W_{L_1}^{\pm c} \cap \bigl(\R \cross (C \cap P_M)\bigr),\qquad \supp \F v_N \subset KG_{L_2}^{\pm} \cap P_{N} 
}
for dyadic numbers $L_1, L_2, M, N$ and a cube $C \subset \R^d$ of side length $e$. 
If $L \ll M \sim N, c>0$ and $c \neq 1$, it holds that  
\EQQS{
 \|P_L(u_M v_N)\|_{L^2_{t,x}} \lec L^{(d-1)/2} (L_1 L_2)^{1/2} \|u_M\|_{L^2_{t,x}} \|v_N\|_{L^2_{t,x}}. 
}
\end{prop}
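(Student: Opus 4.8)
The plan is to argue directly in Fourier variables and reduce the whole estimate to a bound on the measure of the set on which the two supports can interact. Writing $w=u_M v_N$, Plancherel gives $\|P_L(u_M v_N)\|_{L^2_{t,x}}=\|\vp(\x/L)\,\F w\|_{L^2}$, and since a product becomes a convolution,
\EQQS{
 \F w(\ta,\x)=\int_{\R^{1+d}}\F u_M(\ta',\x')\,\F v_N(\ta-\ta',\x-\x')\,d\ta'd\x'.
}
For fixed $(\ta,\x)$ with $|\x|\sim L$ the integrand lives on
\EQQS{
 E(\ta,\x):=\bigl\{(\ta',\x')\mid(\ta',\x')\in\supp\F u_M,\ (\ta-\ta',\x-\x')\in\supp\F v_N\bigr\},
}
so Cauchy--Schwarz in $(\ta',\x')$ and then Fubini yield the standard bilinear bound
\EQQS{
 \|P_L(u_M v_N)\|_{L^2_{t,x}}^2\lec\Bigl(\sup_{|\x|\sim L}|E(\ta,\x)|\Bigr)\|u_M\|_{L^2_{t,x}}^2\|v_N\|_{L^2_{t,x}}^2.
}
It therefore suffices to prove $\sup|E(\ta,\x)|\lec L^{d-1}L_1L_2$.

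To estimate $|E|$ I integrate first in $\ta'$, then in $\x'$. For fixed $\x'$ the constraints $|\ta'\pm c|\x'||\sim L_1$ and $|(\ta-\ta')\pm\LR{\x-\x'}|\sim L_2$ each confine $\ta'$ to a set of measure $\lec L_1$, resp.\ $\lec L_2$, so their intersection has measure $\lec\min(L_1,L_2)$. Summing the two constraints eliminates $\ta'$ and shows the $\x'$--fibre is nonempty only if
\EQQS{
 |\Psi(\x')|\lec\max(L_1,L_2),\qquad \Psi(\x'):=\ta\pm c|\x'|\pm\LR{\x-\x'}.
}
Hence $|E|\lec\min(L_1,L_2)\cdot\bigl|\{\x'\in C:|\Psi(\x')|\lec\max(L_1,L_2)\}\bigr|$, and the proposition reduces to the sublevel bound $|\{\x'\in C:|\Psi(\x')|\lec\max(L_1,L_2)\}|\lec L^{d-1}\max(L_1,L_2)$, since then $|E|\lec L^{d-1}\min(L_1,L_2)\max(L_1,L_2)=L^{d-1}L_1L_2$.

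This sublevel bound is the heart of the argument, and it is exactly here that $L\ll M\sim N$ and $c\neq1$ are used. Because $\x'\in C$ has $|\x'|\sim M$ while $|\x|\sim L\ll M$, the frequencies $\x'$ and $\x-\x'$ are almost antipodal, so
\EQQS{
 \na_{\x'}\Psi=\pm c\frac{\x'}{|\x'|}\mp\frac{\x-\x'}{\LR{\x-\x'}}
}
equals, up to an error $O(L/M)$, a multiple of $\x'/|\x'|$ of length $\bigl|c\pm\tfrac{|\x-\x'|}{\LR{\x-\x'}}\bigr|$, i.e.\ the sum or difference of the wave group speed $c$ and the Klein--Gordon group speed $\frac{|\x-\x'|}{\LR{\x-\x'}}<1$. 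Since $c\neq1$ this quantity is bounded below by a positive constant (the high$\times$high$\to$low analogue of Lemma \ref{Recovery}), so $|\na_{\x'}\Psi|\gec1$ with nearly constant direction across the cube $C$ of side $\sim L$. Consequently $\Psi$ is monotone in one coordinate with derivative $\gec1$, and its sublevel set meets $C$ in a slab of thickness $\lec\max(L_1,L_2)$ and transverse extent $\lec L^{d-1}$, which gives the desired bound.

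I expect the transversality step to be the only real obstacle: one must check that the sum/difference of group speeds stays bounded away from $0$ uniformly over $C$. For $c>1$ this is automatic since the Klein--Gordon group speed is $<1<c$; for $0<c<1$ the delicate configuration is the one with opposite signs, where $\bigl|c-\frac{|\x-\x'|}{\LR{\x-\x'}}\bigr|$ could degenerate near the single frequency $|\x-\x'|=c/\sqrt{1-c^2}$, a bounded range that has to be treated by hand (using $c\neq1$ and, if necessary, the curvature of $\LR{\cdot}$).
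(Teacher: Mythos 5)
Your reduction is the same as the paper's: Cauchy--Schwarz in the convolution variables, Fubini in the time frequency to extract the factor $\min(L_1,L_2)$, and then a sublevel-set bound for the resonance function $\Psi(\xi')=\tau\pm c|\xi'|\pm\LR{\xi-\xi'}$ on the cube of side $\sim L$, obtained from a pointwise lower bound on a directional derivative of $\Psi$ using the near-antipodality of $\xi'$ and $\xi-\xi'$ (from $L\ll M\sim N$) together with $c\neq 1$. Up to the transversality step everything you write matches the paper's argument.

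The step you leave open, however, is precisely the crux, and the fallback you propose does not close it. For $0<c<1$ and the opposite-sign combination, $\bigl|c-|\xi-\xi'|/\LR{\xi-\xi'}\bigr|$ really does vanish when $|\xi-\xi'|=c/\sqrt{1-c^2}$, and near that frequency the first-derivative/slab argument fails; a second-derivative (curvature) argument only yields a sublevel measure $\lec L^{d-1}\max(L_1,L_2)^{1/2}$, which is strictly weaker than the needed $L^{d-1}\max(L_1,L_2)$ when $\max(L_1,L_2)\ll 1$. So ``treated by hand, using curvature if necessary'' is not yet a proof. The paper avoids the degeneracy by never differentiating $\LR{\cdot}$: it replaces $\LR{\xi-\xi'}$ by $1+|\xi-\xi'|$ before applying the mean value theorem, so the Klein--Gordon ``group speed'' becomes exactly $1$, and the chosen partial derivative $\bigl|\tfrac{(\xi-\xi')_i}{|\xi-\xi'|}\pm c\tfrac{\xi'_i}{|\xi'|}\bigr|$ is bounded below by a multiple of $|c-1|$ uniformly in the three regimes $0<c\ll 1$, $c\sim 1$ and $c\gg 1$. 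To complete your argument you should carry out this reduction (and justify it: the discrepancy $1+|\eta|-\LR{\eta}$ oscillates by only $O(1/N)$ over $|\eta|\sim N$, so absorbing it into $\tau$ perturbs the level by an amount that is harmless in the regime where the proposition is applied), rather than appealing to the curvature of $\LR{\cdot}$.
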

\begin{proof}
Let $f := \F u_M, g := \F v_N$. 
By the Cauchy-Schwarz inequality, we have 
\EQQS{
 \Bigl{\|} \int_{|\xi| \sim L} f(\ta_1, \xi_1) g(\ta - \ta_1, \xi - \xi_1) d\ta_1 d\xi_1 \Bigr{\|}_{L^2_{\ta, \xi}} \lec \sup_{\ta,\,  \xi} |E(\ta, \xi)|^{1/2} \|f\|_{L^2} \|g\|_{L^2}
}
where 
\EQQS{
 E(\ta, \xi) = \{ (\ta_1, \xi_1) \in \supp f ; (\ta - \ta_1, \xi - \xi_1) \in \supp g, |\xi| \sim L \} \subset \R^{1+d}.
}
Put $\underline{l} := \min\{ L_1, L_2 \}, \ol{l} := \max\{ L_1, L_2 \}$. 
By the Fubini theorem, 
\EQQS{
 |E(\ta, \xi)| \le \underline{l} \left|  \{ \xi_1 ; \left| \ta \pm c|\xi_1| \pm |\xi - \xi_1| \right| \lec \ol{l}, \xi_1 \in C, 
                                  |\xi_1| \sim M, |\xi - \xi_1| \sim N, |\xi| \sim L\} \right|.
}
In the right-hand side of the above inequality, the subset of the $\xi_1$ is contained in a cube of side length $m$, where $m \sim \min\{e, N\} \sim L$. 
For some $i \in \{1,...,d\}$, we set $|(\xi - \xi_1)_i| \gec N$, where $(\xi - \xi_1)_i$ denotes the $i$-th component of $\xi - \xi_1$. 
We compute  
\EQS{
 |\p_{\xi_{1,i}}(\ta \pm c|\xi_1| \pm (1+|\xi-\xi_1|))| 
 = \left|\frac{(\xi - \xi_1)_i}{|\xi - \xi_1|} \pm c \frac{\xi_{1,i}}{|\xi_1|}\right|,     \label{Mod}
}
where $\xi_{1,i}$ be the $i$-th component of $\xi_1$. 
Since $|(\xi - \xi_1)_i| \gec N$ and $|\xi| \sim L$, it suffices to consider the case $|\xi_{0,i}| \ll |\xi_{1,i}|$, where $\xi_{0,i}$ be the $i$-th component of $\xi$.   
Firstly, we consider the case $0 < c \ll 1$. We have 
\EQQS{
 r.h.s.\ of\ \eqref{Mod} \ge \frac{|(\xi-\xi_1)_i|}{|\xi - \xi_1|} - c\frac{|\xi_{1,i}|}{|\xi_1|} 
                                   \gec 1-c              
}   
from $|(\xi - \xi_1)_i| \gec N \sim |\xi -\xi_1|$ and $|\xi_1| \ge |\xi_{1,i}|$. 
Secondly, we consider the case $c \sim 1, c \neq 1$. The assumption $L \ll N \sim M$ implies $(1-\e)|\xi-\xi_1| \le |\xi_1| \le (1+\e)|\xi-\xi_1|$ for sufficiently small $\e > 0$. From the above inequality and  $|\xi_{0,i}| \ll |\xi_{1,i}|$, we obtain  
\EQQS{
 r.h.s.\ of\ \eqref{Mod} \gec \left| c\frac{|\xi_{1,i}|}{|\xi_1|} - \frac{|(\xi-\xi_1)_i|}{|\xi - \xi_1|} \right| 
                                   \gec |c - 1|.
}
Finally, we consider the case $c \gg 1$. We have   
\EQQS{
 r.h.s.\ of\ \eqref{Mod} \gec c\frac{|\xi_{1,i}|}{|\xi_1|} - \frac{|(\xi - \xi_1)_i|}{|\xi - \xi_1|} 
                                   \gec c-1                
}   
since $|(\xi - \xi_1)_i| \gec N$ and $|\xi_{0,i}| \ll |\xi_{1,i}|$.  
Therefore,  
\EQS{
  |\p_{\xi_{1,i}}(\ta \pm c|\xi_1| \pm (1+|\xi-\xi_1|))| \gec |c - 1|.   \label{deriv}
}
Hence by \eqref{deriv} and the mean value theorem, we have 
\EQQS{
 &\left|  \{ \xi_1 ; \left| \ta \pm c|\xi_1| \pm |\xi - \xi_1| \right| \lec \ol{l}, \xi_1 \in C, |\xi_1| \sim M, |\xi - \xi_1| \sim N, |\xi| \sim L\} \right| \\
 &\lec |c-1|^{-1}m^{d-1} \ol{l}. 
}
From $m \sim L$, we have 
\EQQS{
 |E(\xi, \ta)|^{1/2} \lec (\underline{l} |c-1|^{-1} m^{d-1}\ol{l})^{1/2} 
                           \sim |c-1|^{-1/2}(L_1 L_2)^{1/2}L^{(d-1)/2}. 
}
Thus, we obtain the result.  
\end{proof}

Proposition \ref{x01/2} implies the following.   
\begin{prop} \label{L2lin}
Let $L \ll M \sim N, c>0$ and $c \neq 1$. For $u_M = W_{\pm c}(t) P_M \phi, v_N = K_{\pm}(t) P_N \varphi$, it holds that  
\EQQS{
 \|P_L(u_M v_N)\|_{L^2_{t,x}} \lec L^{(d-1)/2} \|P_M \phi\|_{L^2_x} \|P_N \varphi\|_{L^2_x}.
}
\end{prop}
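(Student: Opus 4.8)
The plan is to pass to the Fourier side and estimate the resulting weighted convolution, reusing the transversality that drives the proof of Proposition \ref{x01/2}. Set $\psi_1(\z)=\pm c|\z|$ and $\psi_2(\z)=\pm\LR{\z}$ for the wave and Klein-Gordon phases, and write $h=\ha{P_M\phi}, k=\ha{P_N\varphi}$. Since $\F u_M$ and $\F v_N$ are the measures carried by the graphs of $\psi_1$ and $\psi_2$, Plancherel together with expansion of the square and the $\ta$- and $\xi$-integrations gives
\EQQS{
 \|P_L(u_M v_N)\|_{L^2_{t,x}}^2 \lec \int_{\R^{4d}} \1_{|\xi_1+\xi_2|\sim L}\, \delta(\xi_1+\xi_2-\xi_1'-\xi_2')\, \delta(\Psi)\, h(\xi_1)k(\xi_2)\ol{h(\xi_1')k(\xi_2')}\, d\xi_1 d\xi_2 d\xi_1' d\xi_2' ,
}
where $\Psi=\psi_1(\xi_1)+\psi_2(\xi_2)-\psi_1(\xi_1')-\psi_2(\xi_2')$ is the resonance function (the $\ta$-integration produces $\delta(\Psi)$ and the $\xi$-integration the frequency $\delta$, with $\1_{|\xi|\sim L}$ becoming $\1_{|\xi_1+\xi_2|\sim L}$).

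First I would symmetrize by Cauchy-Schwarz, bounding $|h(\xi_1)k(\xi_2)\ol{h(\xi_1')k(\xi_2')}|\le\tfrac12(|h(\xi_1)|^2|k(\xi_2')|^2+|h(\xi_1')|^2|k(\xi_2)|^2)$, and treat the first term (the second being symmetric). Eliminating $\xi_1'$ through the frequency $\delta$, i.e. putting $\xi_1'=\xi_1+\xi_2-\xi_2'$, leaves $\xi_2$ \emph{unweighted}, and I integrate $\delta(\Psi)$ in $\xi_2$. Two independent mechanisms then control this integral. The factor $\1_{|\xi_1+\xi_2|\sim L}$ confines $\xi_2$ to a ball of radius $\sim L$ centered at $-\xi_1$, so the zero set of $\Psi$ inside it is a hypersurface of $(d-1)$-measure $\lec L^{d-1}$; this is exactly what produces the gain $L^{d-1}$ in place of the trivial $M^{d-1}$. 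Moreover $h$ is supported in $\{|\xi_1|\sim M\}$ and $|\xi_1+\xi_2|\sim L\ll M$ forces $|\xi_2|\sim M\gg1$, while $|k(\xi_2')|\ne0$ forces $\xi_1'\approx-\xi_2'$ large, so the co-area Jacobian obeys
\EQQS{
 |\na_{\xi_2}\Psi| = \Bigl| \pm\tfrac{\xi_2}{\LR{\xi_2}} \mp c\tfrac{\xi_1'}{|\xi_1'|}\Bigr| \gec |c-1|
}
by the reverse triangle inequality (here $\xi_1'=\xi_1+\xi_2-\xi_2'$), exactly the transversality \eqref{deriv} of Proposition \ref{x01/2}. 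It is precisely here that $c\neq1$ is indispensable; the high-frequency threshold that makes the reverse triangle inequality quantitative is handled as in Lemma \ref{Recovery}.

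Combining the two facts gives $\int \1_{|\xi_1+\xi_2|\sim L}\,\delta(\Psi)\, d\xi_2\lec |c-1|^{-1}L^{d-1}$ uniformly in $\xi_1,\xi_2'$, whence the first symmetrized term is at most $|c-1|^{-1}L^{d-1}\int|h|^2\int|k|^2=|c-1|^{-1}L^{d-1}\|P_M\phi\|_{L^2_x}^2\|P_N\varphi\|_{L^2_x}^2$; the second term is identical after integrating $\delta(\Psi)$ in $\xi_2'$, now confined by $|\xi_1'+\xi_2'|=|\xi|\sim L$. This yields the claimed bound with implicit constant $\sim|c-1|^{-1/2}$. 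I expect the main obstacle to be exactly the arrangement of the previous paragraph: one must split by Cauchy-Schwarz so that, in each resulting factor, the variable carrying the resonance $\delta$ is simultaneously unweighted, pinned to an $O(L)$-ball by the output localization, and high-frequency so that $|\na\Psi|\gec|c-1|$ applies. Securing all three at once is the whole content, and is the free-solution counterpart of the cube-and-modulation bookkeeping in Proposition \ref{x01/2}; note in particular that a naive spatial cube decomposition does \emph{not} help here, since the pieces fail to be almost orthogonal. A minor separate point is to justify the delta-calculus rigorously, which is routine upon regularizing the two cones to thin slabs and letting the thickness tend to $0$.
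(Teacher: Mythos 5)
Your route is genuinely different from the paper's. The paper first proves Proposition \ref{x01/2} for functions whose space--time Fourier supports lie in modulation slabs and a frequency cube, by a single Cauchy--Schwarz applied to the convolution $\F u_M * \F v_N$ and a measure bound for the set $E(\ta,\xi)$, and then passes to free solutions; you instead expand $\|P_L(u_Mv_N)\|_{L^2_{t,x}}^2$ into a fourfold integral with the two delta constraints and symmetrize. The expansion, the choice of the cross pairing $|h(\xi_1)|^2|k(\xi_2')|^2$ (so that the free inner variable is simultaneously unweighted and pinned to an $O(L)$-ball), and the coarea computation are sound, and for $c>1$ your argument closes, since then $\bigl|\tfrac{|\xi_2|}{\LR{\xi_2}}-c\bigr|\ge c-1$ unconditionally.

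The gap is in the transversality step for $0<c<1$, the physically relevant case included in the statement. After your Cauchy--Schwarz, the two vectors entering $\na_{\xi_2}\Psi=\pm\tfrac{\xi_2}{\LR{\xi_2}}\mp c\tfrac{\xi_1'}{|\xi_1'|}$ are \emph{not} in the configuration of \eqref{deriv}: there both $\xi_1$ and $\xi-\xi_1$ are slaved to the single integration variable and forced to be antiparallel by $|\xi|\sim L\ll M$, whereas in your inner integral $\xi_2\approx-\xi_1$ while $\xi_1'\approx-\xi_2'$, and the directions of $\xi_1$ and $\xi_2'$ range independently over their annuli because the AM--GM step discarded $k(\xi_2)$ and $h(\xi_1')$ and with them any directional correlation. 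The reverse triangle inequality therefore only yields $|\na_{\xi_2}\Psi|\ge\bigl|\tfrac{|\xi_2|}{\LR{\xi_2}}-c\bigr|$, the minimum being attained at aligned vectors; for $0<c<1$ this vanishes when $|\xi_2|\sim M$ sits at the frequency where the Klein--Gordon group velocity equals $c$, i.e. $M\sim c/\sqrt{1-c^2}$, which is compatible with $M\sim N$ and $N\gec 1$. (Note that \eqref{deriv} itself achieves the uniform constant $|c-1|$ only because the paper differentiates the surrogate symbol $1+|\xi-\xi_1|$, whose gradient is a genuine unit vector; you cannot simply cite it for $\LR{\cdot}$.) Lemma \ref{Recovery} does not repair this: it concerns the unbalanced regime $|\xi_1|\gg\LR{\xi_2}$ and produces a modulation lower bound, not a gradient lower bound in the balanced regime $M\sim N=O(1)$. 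So the claimed uniform bound $\int\1_{|\xi_1+\xi_2|\sim L}\,\de(\Psi)\,d\xi_2\lec|c-1|^{-1}L^{d-1}$ fails on the (finitely many, but non-removable) dyadic blocks $M$ near the tangent frequency, and your proof does not close there. To fix it you must either retain one of the discarded support factors so as to restore the slaved antiparallel configuration --- which is precisely what the paper's cube decomposition together with Cauchy--Schwarz on the convolution accomplishes, so your dismissal of the cube decomposition is misplaced (the diagonal sum over cubes is handled by the triangle inequality plus Cauchy--Schwarz, with no almost-orthogonality needed) --- or supply a separate curvature argument near the tangency.
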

 
From Proposition \ref{mlinear} and the above proposition, we have the following.
\begin{prop}   \label{u2-be1}
Let $L \ll M \sim N, c>0$ and $c \neq 1$. It holds that 
\EQQS{
 \|P_L(u_M v_N)\|_{L^2_{t,x}} \lec L^{(d-1)/2} \|u_M\|_{U^2_{W_{\pm c}}} \|v_N\|_{U^2_{K_{\pm}}}.    
} 
\end{prop}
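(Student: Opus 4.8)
The plan is to obtain Proposition \ref{u2-be1} from the free-solution bilinear estimate of Proposition \ref{L2lin} by the transference principle, in exactly the way Proposition \ref{mlinear} transfers an $L^p_t L^q_x$ bound on free evolutions to a bound on $U^p$ functions. The one point requiring attention is that Proposition \ref{mlinear} is stated for $n$ factors carried by the single group $K_{\pm}$, whereas here the bilinear form $T_0(\phi,\varphi):=P_L\bigl(W_{\pm c}(\cdot)P_M\phi \cdot K_{\pm}(\cdot)P_N\varphi\bigr)$ mixes the wave group $W_{\pm c}$ with the Klein-Gordon group $K_{\pm}$. I would therefore record the mixed-group version of the argument; since its proof is identical in structure to that of Proposition \ref{mlinear}, I only indicate the steps.

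First I would reduce to atoms. Writing $u_M=\sum_j \lambda_j a_j$ with $U^2_{W_{\pm c}}$-atoms $a_j$ and $v_N=\sum_k \mu_k b_k$ with $U^2_{K_{\pm}}$-atoms $b_k$, bilinearity of $T_0$ and the triangle inequality in $L^2_{t,x}$ give $\|P_L(u_M v_N)\|_{L^2_{t,x}} \le \sum_{j,k}|\lambda_j||\mu_k|\,\|T_0(a_j,b_k)\|_{L^2_{t,x}}$. Since the cut-offs $P_M,P_N$ commute with the respective evolutions and with the atomic structure, I may assume the building blocks of the atoms are localized to $|\xi|\sim M$ and $|\xi|\sim N$ respectively. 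Taking the infimum over atomic decompositions at the very end will produce the factor $\|u_M\|_{U^2_{W_{\pm c}}}\|v_N\|_{U^2_{K_{\pm}}}$, so it suffices to prove $\|T_0(a,b)\|_{L^2_{t,x}}\lec L^{(d-1)/2}$ for a single pair of atoms.

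For atoms $a=\sum_{k}\1_{I_k}W_{\pm c}(\cdot)\phi_{k-1}$ and $b=\sum_{l}\1_{J_l}K_{\pm}(\cdot)\psi_{l-1}$ normalized by $\sum_k\|\phi_{k-1}\|_{L^2_x}^2=\sum_l\|\psi_{l-1}\|_{L^2_x}^2=1$, I would pass to the common refinement of the two partitions $\{I_k\}$ and $\{J_l\}$. On each refinement interval $a(t)b(t)$ equals a genuine product of free solutions $W_{\pm c}(t)\phi_{k-1}\cdot K_{\pm}(t)\psi_{l-1}$, so restricting the time integral to that interval and then enlarging it to all of $\R$ only increases the norm; Proposition \ref{L2lin} bounds the contribution of the interval by $L^{d-1}\|\phi_{k-1}\|_{L^2_x}^2\|\psi_{l-1}\|_{L^2_x}^2$. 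Summing over the refinement, each nonempty $I_k\cap J_l$ occurs exactly once, so the total is dominated by $L^{d-1}\bigl(\sum_k\|\phi_{k-1}\|_{L^2_x}^2\bigr)\bigl(\sum_l\|\psi_{l-1}\|_{L^2_x}^2\bigr)=L^{d-1}$, giving the desired atomic bound.

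The only genuinely delicate step is this passage from a single free-solution estimate to the $U^2$ bound: one must check that the double sum over refinement intervals factors as a product of two $\ell^2$ sums, which is precisely where the disjointness of the common refinement and the $L^2$-normalization of the atoms enter, and that Proposition \ref{L2lin} indeed applies on each interval after the time domain is enlarged. Everything else is bookkeeping; in particular the hypotheses $L\ll M\sim N$ and $c\neq 1$ are already consumed inside Proposition \ref{L2lin} and need not be revisited here.
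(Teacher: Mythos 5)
Your proposal is correct and takes essentially the same route as the paper, which obtains the statement simply by combining the free-solution bound of Proposition \ref{L2lin} with the transference principle of Proposition \ref{mlinear}. Your only addition is to write out the atomic transference argument explicitly so as to cover the mixed-group situation ($W_{\pm c}$ in one factor, $K_{\pm}$ in the other), a case the paper leaves implicit when citing Proposition \ref{mlinear}; this point of care is legitimate and your execution of it (common refinement of the partitions, disjointness in time, $\ell^2$ normalization of the atoms) is sound.
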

The following proposion is in ~\cite{HHK}, Proposition 2.20.
\begin{prop}   \label{hokan}
Let $q > 1, E$ be a Banach space, $A = K_{\pm}$ or $W_{\pm c}$ and $T:U^q_A \to E$ be a bounded, linear operator with $\|Tu\|_E \le C_q\|u\|_{U^q_A}$ for all $u \in U^q_A$. In addition, assume that for some $1 \le p < q$ there exists $C_p \in (0, C_q]$ such that the estimate $\|Tu\|_E \le C_p\|u\|_{U^p_A}$ holds true for all $u \in U^p_A$. Then, $T$ satisfies the estimate 
\EQQS{
 \|Tu\|_E \le C_p\bigl( 1 + \ln (C_q/C_p) \bigr)\|u\|_{V^p_A}, \quad u \in V^p_A. 
} 
\end{prop}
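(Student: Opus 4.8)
The plan is to reduce to the identity propagator and then run the classical $U^p$–$V^p$ interpolation through a stopping-time decomposition, with the logarithm appearing when two competing bounds are balanced. First I would exploit the unitarity of $A$: since $\|u\|_{U^r_A}=\|A(-\cdot)u\|_{U^r}$ and $\|u\|_{V^p_A}=\|A(-\cdot)u\|_{V^p}$, the operator $S:=T\circ A(\cdot)$, $Sf=T(A(\cdot)f)$, satisfies $\|Sf\|_E\lec C_q\|f\|_{U^q}$ and $\|Sf\|_E\lec C_p\|f\|_{U^p}$, so it suffices to prove the statement for $A=\mathrm{Id}$, i.e. to show $\|Tf\|_E\le C_p(1+\ln(C_q/C_p))\|f\|_{V^p}$ from the two $U^r$ bounds. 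By homogeneity I normalize $\|f\|_{V^p}=1$, and since Proposition \ref{embedding} gives $V^p_{-,\, rc}\subset U^q$, the quantity $Tf$ is meaningful on right-continuous $f$ with $f(-\I)=0$; the extension to all of $V^p_A$ is a density argument carried out at the end.

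The core is the decomposition of such an $f$. For each integer $M\ge 0$ I would build \emph{nested} greedy partitions at threshold $2^{-M}$: set $t^M_0=-\I$, $t^M_{k+1}=\inf\{t>t^M_k : \|f(t)-f(t^M_k)\|_{L^2_x}\ge 2^{-M}\}$, refining within each level-$(M-1)$ interval so that $\{t^{M-1}_k\}\subset\{t^M_k\}$, and let $u_M$ equal $f(t^M_k)$ on $[t^M_k,t^M_{k+1})$. Because consecutive increments are $\ge 2^{-M}$ while $\sum_k\|f(t^M_{k+1})-f(t^M_k)\|_{L^2_x}^p\le\|f\|_{V^p}^p=1$, the number $K_M$ of steps obeys $K_M\lec 2^{Mp}$. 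The main work is to verify, for $w_M:=u_M-u_{M-1}$ (and $w_0:=u_0$), that every step value of $w_M$ has $L^2_x$-norm $\lec 2^{-M}$ (on a level-$M$ interval, $w_M=f(t^M_j)-f(t^{M-1}_k)$ with both times inside one level-$(M-1)$ interval, where $f$ varies by less than $2^{-(M-1)}$), while $w_M$ has $\lec 2^{Mp}$ steps. Realizing $w_M$ as a single scaled atom then gives simultaneously
\[ \|w_M\|_{U^p}\lec \Bigl(2^{Mp}(2^{-M})^p\Bigr)^{1/p}=1, \qquad \|w_M\|_{U^q}\lec \Bigl(2^{Mp}(2^{-M})^q\Bigr)^{1/q}=2^{-\delta M}, \qquad \delta:=1-p/q>0. \]

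Since $\|f-u_M\|_{L^\I_t L^2_x}\le 2^{-M}\to 0$ the telescoping series $\sum_M w_M$ converges to $f$, and as $\sum_M 2^{-\delta M}<\I$ it also converges in $U^q$, so $Tf=\sum_M Tw_M$. Splitting at a level $M_0$, using the $C_p$ bound below it and the $C_q$ bound above it,
\[ \|Tf\|_E\le C_p\sum_{M\le M_0}\|w_M\|_{U^p}+C_q\sum_{M>M_0}\|w_M\|_{U^q}\lec C_p(M_0+1)+C_q 2^{-\delta M_0}, \]
and choosing $M_0\sim\delta^{-1}\log_2(C_q/C_p)$ balances the two contributions to yield the claimed $\|Tf\|_E\lec C_p(1+\ln(C_q/C_p))$ (tracking the absolute constants to recover exactly the stated form). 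I would then undo the normalization by homogeneity and remove the right-continuity/$f(-\I)=0$ reductions using density of $V^p_{-,\, rc}$ together with Proposition \ref{embedding}. I expect the main obstacle to be precisely the simultaneous verification of the two atom bounds for $w_M$ — the bookkeeping showing that the refinement $u_M-u_{M-1}$ has $O(2^{-M})$ step heights and only $O(2^{Mp})$ steps, which hinges on the nesting of the partitions — since the convergence and the logarithmic balancing are routine once these are in hand.
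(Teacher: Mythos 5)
The paper does not actually prove this proposition; it is quoted verbatim from Hadac--Herr--Koch (\cite{HHK}, Proposition 2.20), and your argument is essentially the proof given there: nested greedy stopping-time partitions at thresholds $2^{-M}$, the telescoping decomposition $f=\sum_M w_M$ with the simultaneous bounds $\|w_M\|_{U^p}\lec 1$ and $\|w_M\|_{U^q}\lec 2^{-(1-p/q)M}$, and the logarithm from balancing the two operator bounds at a cutoff $M_0\sim \log(C_q/C_p)$. The core of your argument is sound (including the bookkeeping you flag: nesting the partitions costs only an extra $K_{M-1}\le 2^{(M-1)p}$ restart points, so the step count of $w_M$ remains $\lec 2^{Mp}$, and right-continuity is what guarantees each interior increment is $\ge 2^{-M}$), and it yields the stated bound up to an absolute constant depending on $p,q$. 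One caveat: your closing ``density'' step cannot work, since $V^p_{-,rc}$ is a \emph{closed proper} subspace of $V^p$ (e.g.\ $\1_{[0,\I)}\phi$ and $\1_{(0,\I)}\phi$ are at $V^p$-distance $\|\phi\|_{L^2_x}$); but no extension is needed, because $V^p\not\subset U^q$ so $Tu$ is undefined off $U^q$ anyway --- the statement should be read, as in \cite{HHK}, for $u\in V^p_{-,rc,A}$, which is the only case used in the proof of Proposition \ref{bi-str}.
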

\begin{prop} \label{bi-str}
Let $L \ll M \sim N, N \ge 1, c>0$ and $c \neq 1$. For sufficiently small $\e > 0$, it holds that 
\EQQS{
 \|P_L(u_M v_N)\|_{L^2_{t,x}} \lec L^{(d-1)/2} \left( M/L \right)^{\e} \|u_M\|_{V^2_{W_{\pm c}}} \|v_N\|_{V^2_{K_{\pm}}}. 
}
\end{prop}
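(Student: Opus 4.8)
The plan is to interpolate between the sharp $U^2$ bilinear estimate of Proposition \ref{u2-be1}, which carries the good constant $L^{(d-1)/2}$ at the cost of two $U^2$-endpoints, and a cheap $U^4$ Strichartz bound with the worse constant $M^{(d-1)/2}$, using the logarithmic interpolation Proposition \ref{hokan}. Since Proposition \ref{hokan} concerns a \emph{linear} operator, I would freeze one factor and apply it in each variable in turn, upgrading $U^2_{W_{\pm c}}$ and $U^2_{K_{\pm}}$ to $V^2_{W_{\pm c}}$ and $V^2_{K_{\pm}}$ one at a time. The cheap bound comes from H\"older's inequality and the $L^4$ Strichartz estimate of Proposition \ref{u4str}:
\[
 \|P_L(u_M v_N)\|_{L^2_{t,x}} \le \|u_M\|_{L^4_{t,x}} \|v_N\|_{L^4_{t,x}} \lec M^{(d-1)/2} \|u_M\|_{U^4_{W_{\pm c}}} \|v_N\|_{U^4_{K_{\pm}}},
\]
where I used $M \sim N$ and $N \ge 1$.

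First I would fix $v_N \in U^2_{K_{\pm}}$ and view $u_M \mapsto P_L(u_M v_N)$ as a linear operator into $L^2_{t,x}$. Proposition \ref{u2-be1} bounds it on $U^2_{W_{\pm c}}$ with constant $C_2 \sim L^{(d-1)/2}\|v_N\|_{U^2_{K_{\pm}}}$, while the display above together with the embedding $U^2_{K_{\pm}} \subset U^4_{K_{\pm}}$ bounds it on $U^4_{W_{\pm c}}$ with constant $C_4 \sim M^{(d-1)/2}\|v_N\|_{U^2_{K_{\pm}}}$. Because $L \ll M$ one has $C_2 \le C_4$, so Proposition \ref{hokan} applies, and since $\ln(C_4/C_2) \lec \ln(M/L)$ it gives
\[
 \|P_L(u_M v_N)\|_{L^2_{t,x}} \lec L^{(d-1)/2}\bigl(1+\ln(M/L)\bigr)\|v_N\|_{U^2_{K_{\pm}}}\|u_M\|_{V^2_{W_{\pm c}}}.
\]

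Next I would fix $u_M \in V^2_{W_{\pm c}}$ and run the identical argument for $v \mapsto P_L(u_M v)$. The previous display furnishes the $U^2_{K_{\pm}}$ bound with constant $C_2' \sim L^{(d-1)/2}\bigl(1+\ln(M/L)\bigr)\|u_M\|_{V^2_{W_{\pm c}}}$, while H\"older, Proposition \ref{u4str}, and the continuous embedding $V^2_{W_{\pm c}} \subset U^4_{W_{\pm c}}$ from Proposition \ref{embedding}(iii) give the $U^4_{K_{\pm}}$ bound with constant $C_4' \sim M^{(d-1)/2}\|u_M\|_{V^2_{W_{\pm c}}}$; again $L \ll M$ forces $C_2' \le C_4'$. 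A second application of Proposition \ref{hokan} then produces a second logarithmic factor,
\[
 \|P_L(u_M v_N)\|_{L^2_{t,x}} \lec L^{(d-1)/2}\bigl(1+\ln(M/L)\bigr)^2 \|u_M\|_{V^2_{W_{\pm c}}}\|v_N\|_{V^2_{K_{\pm}}},
\]
and absorbing the logarithm via $\bigl(1+\ln(M/L)\bigr)^2 \lec (M/L)^{\e}$ (with implicit constant depending on $\e$), valid for every $\e>0$ since $M/L \gec 1$, yields the claimed estimate.

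The only points requiring care are the ordering hypothesis $C_p \le C_q$ in each invocation of Proposition \ref{hokan}: both times it is guaranteed precisely by the configuration $L \ll M \sim N$ of the hypothesis, and the $U^2 \subset U^4$ and $V^2 \subset U^4$ embeddings allow me to replace the $U^4$-endpoint norm of the frozen factor by its $U^2$- or $V^2$-norm at no cost. The single quantitative input is the sharp power $L^{(d-1)/2}$ supplied by Proposition \ref{u2-be1}; everything else is routine $U^p$--$V^p$ bookkeeping, and the two accumulated logarithms are harmless after the final conversion to $(M/L)^{\e}$.
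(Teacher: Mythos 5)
Your proposal is correct and follows essentially the same route as the paper: interpolate via Proposition \ref{hokan} between the $U^2$ bilinear estimate of Proposition \ref{u2-be1} and the $L^4$-Strichartz/H\"older bound, applying it once in each variable and absorbing the accumulated logarithms into $(M/L)^{\e}$. The only (immaterial) differences are that the paper upgrades the Klein--Gordon factor to $V^2$ first and the wave factor second, whereas you do the reverse, and that the paper writes the loss as $(M/L)^{\e'}$ at each stage rather than carrying the explicit logarithm to the end.
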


\begin{proof} 
By the H\"{o}lder inequality, $M \sim N, N \ge 1$ and Proposition \ref{u4str}, we obtain  
\EQS{
 \|P_L(u_M v_N)\|_{L^2_{t,x}} \lec \|u_M\|_{L^4_{t,x}} \|v_N\|_{L^4_{t,x}} 
                                               \lec M^{(d-1)/2}\|u_M\|_{U^4_{W_{\pm c}}} \|v_N\|_{U^4_{K_{\pm}}}.       \label{L4}                                     
} 
Let $Sv := P_L(\til{P}_M u \til{P}_N v)$, where $\til{P}_M = P_{M/2} + P_M + P_{2M}$, such that $\til{P}_M P_M = P_M. \til{P}_N$ is defined by the same manner as $\til{P}_M$.    
From \eqref{L4} and $U^2_{W_{\pm c}} \subset U^4_{W_{\pm c}}$, we have  
\EQS{
 \|S\|_{U^4_{K_{\pm}} \to L^2} \lec M^{(d-1)/2}\|u\|_{U^4_{W_{\pm c}}} 
                                            \lec M^{(d-1)/2}\|u\|_{U^2_{W_{\pm c}}}.   \label{t1}
}
From Proposition \ref{u2-be1}, we have 
\EQS{
 \|S\|_{U^2_{K_{\pm}} \to L^2} \lec L^{(d-1)/2}\|u\|_{U^2_{W_{\pm c}}}.   \label{t2}
}
From \eqref{t1}, \eqref{t2} and Proposition \ref{hokan}, for sufficiently small $\e' > 0$, we have   
\EQS{
 \|S\|_{V^2_{K_{\pm}} \to L^2} \lec L^{(d-1)/2} \left( M/L \right)^{\e'} \|u\|_{U^2_{W_{\pm c}}}.      \label{v2bs1}
}
Let $Tu := P_L(\til{P}_M u \til{P}_N v)$. 
From Proposition \ref{u4str}, $M \sim N$ and $V^2_{K_{\pm}} \subset U^4_{K_{\pm}}$, we have    
\EQS{
 \|T\|_{U^4_{W_{\pm c}} \to L^2} \lec N^{(d-1)/2}\|v_N\|_{U^4_{K_{\pm}}} 
                                               \lec N^{(d-1)/2}\|v_N\|_{V^2_{K_{\pm}}}
                                                \lec N^{(d-1)/2}\|v\|_{V^2_{K_{\pm}}}.   \label{s1}
}
By \eqref{v2bs1}, we have 
\EQS{
 \|T\|_{U^2_{W_{\pm c}} \to L^2} \lec L^{(d-1)/2} \left( M/L \right)^{\e'} \|v\|_{V^2_{K_{\pm}}}.   \label{s2}
}
Collecting \eqref{s1}, \eqref{s2}, $M \sim N$ and Proposition \ref{hokan}, we obtain  
\EQQS{
 \|T\|_{V^2_{W_{\pm c}} \to L^2} \lec L^{(d-1)/2} \left( M/L \right)^{2\e'} \|v\|_{V^2_{K_{\pm}}}.      
}
Taking $\e = 2\e'$, the claim follows.
\end{proof}
\section{Bilinear estimates}
\begin{prop}  \label{BE}
Let $d \ge 5, s = s_c = d/2-2$ and $c>0, c \neq 1$. Then for all $0 < T < \I$, it holds that 
\EQS{
 &\| I_{T,K_{\pm}}(n,v)\|_{Z^s_{K_{\pm}}} 
      \lec \|n\|_{\dot{Y}^s_{W_{\pm c}}} \|v\|_{Y^s_{K_{\pm}}},                     \label{BEKG} \\
 &\| I_{T,\, W_{\pm c}}(u,v)\|_{\dot{Z}^s_{W_{\pm c}}} 
      \lec \|u\|_{Y^s_{K_{\pm}}} \|v\|_{Y^s_{K_{\pm}}}.                                 \label{BEW}
} 
\end{prop}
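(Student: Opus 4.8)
The plan is to handle both inequalities by the duality characterization of the $U^2$-norm and then reduce everything to the trilinear bounds already collected in Lemma \ref{tri}. Since $I_{T,K_{\pm}}(n,v)$ is the Duhamel solution of $(i\p_t\mp\om_1)U=\pm\1_{[0,T]}\,n\,(\om_1^{-1}v)$ with $U(0)=0$, Corollary \ref{U2A} gives for every dyadic $N_1$
\[
\|P_{N_1}I_{T,K_{\pm}}(n,v)\|_{U^2_{K_{\pm}}}=\sup_{\|w\|_{V^2_{K_{\pm}}}=1}\Bigl|\int_{\R^{1+d}}\1_{[0,T]}\,n\,(\om_1^{-1}v)\,\ol{P_{N_1}w}\,dxdt\Bigr|,
\]
and the same identity for the $P_{<1}$ piece. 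Hence \eqref{BEKG} reduces to bounding the weighted $\ell^2_{N_1}$-sum of these trilinear suprema by $\|n\|_{\dot Y^s_{W_{\pm c}}}\|v\|_{Y^s_{K_{\pm}}}$; after Littlewood--Paley decomposing $n$ and $v$, each such supremum is exactly a form of the type treated in Lemma \ref{tri}. Applying the same duality to \eqref{BEW}, now against $V^2_{W_{\pm c}}$, replaces the source by $\om\bigl((\om_1^{-1}u)(\ol{\om_1^{-1}v})\bigr)$ and reduces the left-hand side to the weighted $\ell^2_{N_3}$-sum of suprema of the symmetric form $I_1$ of Lemma \ref{tri}, with the wave factor now the tested output.

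Next I would decompose by Littlewood--Paley, writing $N_1,N_2,N_3$ for the frequencies of the Klein--Gordon output $u$, the Klein--Gordon input $v$, and the wave factor $n$, constrained by $\xi_1=\xi_2+\xi_3$. The low-output contribution is exactly $I_2$, bounded by Lemma \ref{tri}(ii). For $N_1\ge1$ the convolution constraint leaves two regimes. In the regime $N_1\sim N_2$ with $N_3\lec N_2$ (the two Klein--Gordon frequencies comparable), the contribution to \eqref{BEKG} is $I_3$; its bound in Lemma \ref{tri}(iii) already sums the low wave frequencies internally via the orthogonality of \eqref{ortho}--\eqref{ortho2}, and after the weight $N_1^s$ and $N_1\sim N_2$ a Cauchy--Schwarz in $N_1$ closes this part. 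The mirror regime for \eqref{BEW}, two comparable Klein--Gordon inputs over a lower wave output, is handled by Lemma \ref{tri}(i), whose explicit $N_3^s$ gain produces the off-diagonal factor $(N_3/N_1)^{2s}$ needed for summation.

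The decisive regime is $N_1\sim N_3\gg N_2$: the high Klein--Gordon output resonates with the high wave frequency against a low Klein--Gordon input. Here I would invoke Lemma \ref{Recovery}: because $|\xi_1|\gg\LR{\xi_2}$, fixing $M=\e N_1$ forces at least one of the three factors into $Q_{\ge M}$, the all-low-modulation term vanishing identically. If the wave factor or the output factor carries the large modulation, then \eqref{mod} extracts the missing gain $M^{-1/2}\sim N_1^{-1/2}$ and the remaining factors are absorbed by Strichartz (Proposition \ref{Str}); these two subcases are precisely $I_4$ and $I_5$ of Lemma \ref{tri}(iv). The remaining possibility, that the low-frequency input $\om_1^{-1}v_{N_2}$ carries the large modulation, is the crux. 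The half derivative is still recovered from $Q_{\ge M}^{K_{\pm}}$ acting on $v$ through \eqref{mod}, but this forces the two surviving factors $\til n_{N_3}$ and $\til u_{N_1}$, both at the single high frequency $\sim N_1$, to be estimated jointly; separate Strichartz bounds for each are too lossy at $s=s_c$ (this is exactly the loss that obliged \cite{Ka} to assume $s_c+1/(d+1)$). Instead I would apply the bilinear Strichartz estimate of Proposition \ref{bi-str} to $P_L(\til n_{N_3}\til u_{N_1})$ with $L\sim N_2$, $M\sim N_1$, which yields the factor $L^{(d-1)/2}(M/L)^{\e}$.

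The main obstacle is this last subcase, and its resolution hinges on the transversality of the wave cone and the Klein--Gordon hyperboloid quantified in Proposition \ref{bi-str}: the gain $L^{(d-1)/2}$ at the small output scale $L\sim N_2$, combined with the modulation gain $N_1^{-1/2}$ and the factor $\LR{N_2}^{-1}$ from $\om_1^{-1}$, reconstructs the full half derivative at scale $N_1$ and leaves an off-diagonal surplus $(N_2/N_1)^{1/2-\e}$ that is summable over $N_2\ll N_1$. This transversality is genuine only because $c\neq1$ --- when $c=1$ the two surfaces become tangent and the derivative lower bound \eqref{deriv} degenerates --- so the hypothesis $c\neq1$ enters here exactly as it does in Lemma \ref{Recovery}. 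Finally I would assemble the pieces: in each regime the off-diagonal factors above supply geometric decay in the ratio of the extreme frequencies, so a Schur/Cauchy--Schwarz argument together with the $\ell^2$ structure of $\|n\|_{\dot Y^s_{W_{\pm c}}}$, $\|v\|_{Y^s_{K_{\pm}}}$ (and $\|u\|_{Y^s_{K_{\pm}}}$ for \eqref{BEW}) yields the stated bounds, the estimate \eqref{BEW} following from the identical three-regime analysis with the roles of the wave and Klein--Gordon outputs interchanged.
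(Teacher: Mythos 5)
Your proposal reproduces the paper's proof in all essential respects: the duality reduction via Corollary \ref{U2A}, the Littlewood--Paley case analysis, the treatment of the low-frequency output by Lemma \ref{tri}(ii) and of the balanced Klein--Gordon regime by Lemma \ref{tri}(iii) (resp.\ by Lemma \ref{tri}(i) for \eqref{BEW}), the modulation decomposition at $M=\e N_1$ justified by Lemma \ref{Recovery} in the regime $N_1\sim N_3\gg N_2$, the identification of two of the three resulting subcases with $I_4$, $I_5$ of Lemma \ref{tri}(iv), and the use of the bilinear Strichartz estimate (Proposition \ref{bi-str}) at output scale $L\sim N_2$ in the remaining subcase, where your exponent bookkeeping agrees with \eqref{jjj}. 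Your explanation of where $c\neq 1$ enters is also the correct one.

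The one genuine omission is in the sentence ``for $N_1\ge 1$ the convolution constraint leaves two regimes.'' It leaves three: besides $N_1\sim N_2\gtrsim N_3$ and $N_1\sim N_3\gg N_2$, the constraint $\xi_1=\xi_2+\xi_3$ also permits $1\le N_1\ll N_2\sim N_3$, i.e.\ a low-frequency Klein--Gordon output produced by two comparable high-frequency inputs. This is the paper's term $J_3$, and it is not subsumed by either of your two regimes (your ``mirror regime'' remark concerns \eqref{BEW}, not \eqref{BEKG}). No new idea is needed --- the pointwise trilinear bound is the same as in Lemma \ref{tri}(iii), cf.\ \eqref{j3d} --- but the summation is structurally different from the balanced case: one must control $\sum_{N_1\ge1}N_1^{2s_c}\bigl(\sum_{N_2\gg N_1}\sum_{N_3\sim N_2}N_3^{s_c}\|n_{N_3}\|_{V^2_{W_{\pm c}}}\|v_{N_2}\|_{V^2_{K_{\pm}}}\bigr)^2$, which the paper closes via $\|\cdot\|_{\ell^2\ell^1}\lec\|\cdot\|_{\ell^1\ell^2}$ and Cauchy--Schwarz, and which converges precisely because $s_c=d/2-2>0$. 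This is one of the places where the hypothesis $d\ge5$ (as opposed to $d=4$, where $s_c=0$ and the inner sum over $N_1\ll N_2$ would not give the needed decay) is actually used, so a complete proof must record this case explicitly. Apart from this, the argument is correct and is the paper's own.
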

\begin{rem} \label{remconstbe}
In \eqref{BEKG} and \eqref{BEW}, the implicit constant does not depend on $T$. 
\end{rem}
\begin{proof}
We denote $\til{u}_{N_1}:=\1_{[0,T)}P_{N_1}u, \til{v}_{N_2}:=\1_{[0,T)}P_{N_2}v, \til{n}_{N_3}:=\1_{[0,T)}P_{N_3}n$. 
To prove \eqref{BEKG}, we need to estimate the following.  
\EQQS{
  \| I_{T,K_{\pm}}(n,v)\|_{Z^{s_c}_{K_{\pm}}}^2 \lec \sum_{i=0}^3 J_i   
}
where 
\EQQS{    
 &J_0 := \Bigl{\|} \int_0^t \1_{[0,T)}(t')K_{\pm}(t-t') P_{<1}(\til{n} (\om_1^{-1} \til{v}))(t')dt'\Bigr{\|}_{U^2_{K_{\pm}}}^2, \\
 &J_1 := \sum_{N_1 \ge 1} N_1^{2s_c} \Bigl{\|} \int_0^t \1_{[0,T)}(t') K_{\pm}(t-t') \sum_{N_2 \sim N_1} \sum_{N_3 \lec N_2} 
              P_{N_1}(\til{n}_{N_3}(\om_1^{-1}\til{v}_{N_2}))(t')dt' \Bigr{\|}_{U^2_{K_\pm}}^2,  \\
 &J_2 := \sum_{N_1 \ge 1} N_1^{2s_c} \Bigl{\|} \int_0^t \1_{[0,T)}(t') K_{\pm}(t-t') \sum_{N_2 \ll N_1} \sum_{N_3 \sim N_1} 
              P_{N_1}(\til{n}_{N_3}(\om_1^{-1}\til{v}_{N_2}))(t')dt' \Bigr{\|}_{U^2_{K_\pm}}^2,  \\  
 &J_3 := \sum_{N_1 \ge 1} N_1^{2s_c} \Bigl{\|} \int_0^t \1_{[0,T)}(t') K_{\pm}(t-t') \sum_{N_2 \gg N_1} \sum_{N_3 \sim N_2} 
              P_{N_1}(\til{n}_{N_3}(\om_1^{-1}\til{v}_{N_2}))(t')dt' \Bigr{\|}_{U^2_{K_\pm}}^2. 
}
By Corollary \ref{U2A} and Lemma \ref{tri} $(ii)$, we have 
\EQS{
 J_0^{1/2} 
     &\lec \sup_{\|u\|_{V^2_{K_{\pm}}}=1} \Bigl{|}\int_{\R^{1+d}} \til{n}(\om_1^{-1}\til{v})(\ol{P_{<1}\til{u}})dxdt\Bigr{|} \notag  \\
     &\lec \|n\|_{\dot{Y}^{s_c}_{W_{\pm c}}}\|v\|_{Y^{s_c}_{K_{\pm}}}.                                         \label{j0}
}
We apply Corollary \ref{U2A}, $N_1 \sim N_2$, Lemma \ref{tri} $(iii)$ and $\|\til{u}_{N_1}\|_{V^2_{K_{\pm}}} \lec \|u\|_{V^2_{K_{\pm}}}$, then   
\EQS{
 J_1 &\lec \sum_{N_1 \ge 1} N_1^{2s_c} \sup_{\|u\|_{V^2_{K_{\pm}}}=1} \Bigl{|}\sum_{N_2 \sim N_1} \sum_{N_3 \lec N_2}
                  \int_{\R^{1+d}} \til{n}_{N_3} (\om_1^{-1}\til{v}_{N_2}) \ol{\til{u}_{N_1}} dxdt\Bigr{|}^2               \notag \\                
       &\lec \sum_{N_2 \gec 1} N_2^{2s_c} \|n\|_{\dot{Y}^{s_c}_{W_{\pm c}}}^2 \|v_{N_2}\|_{V^2_{K_{\pm}}}^2  \notag \\
       &\lec \|n\|_{\dot{Y}^{s_c}_{W_{\pm c}}}^2 \|v\|_{Y^{s_c}_{K_{\pm}}}^2.                                 \label{j1}
} 
For the estimate of $J_2$, we take $M=\e N_1$ for sufficiently small $\e >0$. 
Then, from Lemma \ref{Recovery}, we have  
\EQQS{
   &P_{N_1} Q_{<M}^{K_{\pm}} \bigl( (Q_{<M}^{W_{\pm c}} \til{n}_{N_3})(Q_{<M}^{K_{\pm}} \om_1^{-1}\til{v}_{N_2}) \bigr) \\
   &= P_{N_1}  Q_{<M}^{K_{\pm}} \Bigl[ \F^{-1}\Bigl( \int_{\ta_1 = \ta_2 + \ta_3,\, \xi_1 = \xi_2 + \xi_3} 
       \widehat{(Q_{<M}^{W_{\pm c}} \til{n}_{N_3})}(\ta_3, \xi_3) \widehat{(Q_{<M}^{K_{\pm}} \om_1^{-1}\til{v}_{N_2})}(\ta_2, \xi_2) \Bigr) \Bigr]
   =0
}
when $N_1 \gg \LR{N_2}$. Therefore, 
\EQQS{
 P_{N_1}\bigl(\til{n}_{N_3} (\om_1^{-1}\til{v}_{N_2})\bigr)= \sum_{i=1}^3 P_{N_1} F_i, 
}
where   
\EQQS{
 &F_1 := Q_1\bigl( (Q_{\ge M}^{W_{\pm c}} \til{n}_{N_3}) (Q_2 \om_1^{-1}\til{v}_{N_2})\bigr), \qquad  
   F_2 := Q_1\bigl( (Q_3 \til{n}_{N_3}) (Q_{\ge M}^{K_{\pm}} \om_1^{-1}\til{v}_{N_2})\bigr), \notag \\
 &F_3 := Q_{\ge M}^{K_{\pm}}\bigl((Q_3 \til{n}_{N_3}) (Q_2 \om_1^{-1}\til{v}_{N_2})\bigr). 
}
Here, $Q_1, Q_2 \in \{ Q_{<M}^{K_{\pm}}, Q_{\ge M}^{K_{\pm}} \}$ and  $Q_3 \in \{ Q_{<M}^{W_{\pm c}}, Q_{\ge M}^{W_{\pm c}} \}$. 
For the estimate of $F_1$, we apply Corollary \ref{U2A}, Lemma \ref{Mhigh}, Lemma \ref{tri} $(iv), N_3 \sim N_1 \ge 1$ and $\|\til{u}_{N_1}\|_{V^2_{K_{\pm}}} \lec \|u\|_{V^2_{K_{\pm}}}$, then we have 
\EQS{
 &\sum_{N_1 \ge 1} N_1^{2s_c} \Bigl{\|} \int_0^t \1_{[0,T)}(t') K_{\pm}(t-t') \sum_{N_2 \ll N_1} \sum_{N_3 \sim N_1} 
            P_{N_1}F_1(t') dt' \Bigr{\|}_{U^2_{K_\pm}}^2  \notag \\
 &\lec \sum_{N_1 \ge 1} N_1^{2s_c} \sup_{\|u\|_{V^2_{K_{\pm}}}=1} \Bigl{|} \sum_{N_2 \ll N_1} \sum_{N_3 \sim N_1} 
            \int_{\R^{1+d}} (Q_{\ge M}^{W_{\pm c}}\til{n}_{N_3}) (Q_2 \om_1^{-1}\til{v}_{N_2} ) (\ol{Q_1 \til{u}_{N_1}}) dxdt\Bigr{|}^2  \notag \\
 &\lec \sum_{N_3 \gec 1} N_3^{2s_c}  \|n_{N_3}\|_{V^2_{W_{\pm c}}}^2 \|v\|_{Y^{s_c}_{K_{\pm}}}^2    \notag \\
 &\lec \|n\|_{\dot{Y}^{s_c}_{W_{\pm c}}}^2 \|v\|_{Y^{s_c}_{K_{\pm}}}^2.                                                      \label{Jj21}
} 
For the estimate of $F_2$, we apply Corollary \ref{U2A}, Lemma \ref{Mhigh} and the triangle inequality, we have 
\EQS{
 &\sum_{N_1 \ge 1} N_1^{2s_c} \Bigl{\|} \int_0^t \1_{[0,T)}(t') K_{\pm}(t-t') \sum_{N_2 \ll N_1} \sum_{N_3 \sim N_1} 
    P_{N_1}F_2(t') dt' \Bigr{\|}_{U^2_{K_\pm}}^2 \notag \\
 &\lec \sum_{N_1 \ge 1} N_1^{2s_c} \sup_{\|u\|_{V^2_{K_{\pm}}}=1} \Bigl{|} \sum_{N_2 \ll N_1} \sum_{N_3 \sim N_1} 
            \int_{\R^{1+d}} (Q_3 \til{n}_{N_3}) (Q_{\ge M}^{K_{\pm}} \om_1^{-1}\til{v}_{N_2} ) (\ol{Q_1 \til{u}_{N_1}}) dxdt\Bigr{|}^2 \notag \\
 &\lec \sum_{N_1 \ge 1} N_1^{2s_c} \sup_{\|u\|_{V^2_{K_{\pm}}}=1} \sum_{N_2 \ll N_1} \sum_{N_3 \sim N_1} 
            \Bigl{|} \int_{\R^{1+d}} (Q_3 \til{n}_{N_3}) (Q_{\ge M}^{K_{\pm}} \om_1^{-1}\til{v}_{N_2} ) (\ol{Q_1 \til{u}_{N_1}}) dxdt\Bigr{|}^2.  \label{jj} 
}
By Proposition \ref{bi-str}, $N_2 \ll N_1 \sim N_3, N_1 \ge 1$ and Proposition \ref{modulation}, we have 
\EQS{
 &\Bigl{|} \int_{\R^{1+d}} (Q_3 \til{n}_{N_3}) (Q_{\ge M}^{K_{\pm}} \om_1^{-1}\til{v}_{N_2}) (\ol{Q_1 \til{u}_{N_1}}) dxdt\Bigr{|} \notag \\
  &\lec \|Q_{\ge M}^{K_{\pm}} \om_1^{-1}\til{v}_{N_2}\|_{L^2_{t,x}} \|P_{N_2}\bigl( (Q_3 \til{n}_{N_3})(\ol{Q_1 \til{u}_{N_1}}) \bigr)\|_{L^2_{t,x}} \notag \\
  &\lec N_3^{-1/2} \LR{N_2}^{-1} \|v_{N_2}\|_{V^2_{K_{\pm}}} N_2^{(d-1)/2}(N_3/N_2)^{\e} \|n_{N_3}\|_{V^2_{\pm c}} \|u_{N_1}\|_{V^2_{K_{\pm}}} \notag \\
  &\lec N_2^{s_c}(N_2/N_3)^{1/2-\e} \|v_{N_2}\|_{V^2_{K_{\pm}}} \|n_{N_3}\|_{V^2_{\pm c}} \|u_{N_1}\|_{V^2_{K_{\pm}}}. \label{jjj}
}
By \eqref{jjj} and the Cauchy-Schwarz inequality, the right-hand side of \eqref{jj} is bounded by  
\EQS{
 &\sum_{N_3 \gec 1} N_3^{2s_c}  \|n_{N_3}\|_{V^2_{W_{\pm c}}}^2 \Bigl( \sum_{N_2 \ll N_3} \bigl( N_2/N_3 \bigr)^{1/2-\e}N_2^{s_c}\|v_{N_2}\|_{V^2_{K_{\pm}}}\Bigr)^2  \notag \\
 &\lec \|n\|_{\dot{Y}^{s_c}_{W_{\pm c}}}^2 \|v\|_{Y^{s_c}_{K_{\pm}}}^2.                                                      \label{Jj22}
}  
For the estimate for $F_3$, we apply Corollary \ref{U2A}, Lemma \ref{Mhigh}, Lemma \ref{tri} $(iv), N_3 \sim N_1 \ge 1$ and $\|\til{u}_{N_1}\|_{V^2_{K_{\pm}}} \lec \|u\|_{V^2_{K_{\pm}}}$, then we obtain  
\EQS{
 &\sum_{N_1 \ge 1} N_1^{2s_c} \Bigl{\|} \int_0^t \1_{[0,T)}(t') K_{\pm}(t-t') \sum_{N_2 \ll N_1} \sum_{N_3 \sim N_1} 
    P_{N_1}F_3(t') dt' \Bigr{\|}_{U^2_{K_\pm}}^2 \notag \\
 &\lec \sum_{N_1 \ge 1} N_1^{2s_c} \sup_{\|u\|_{V^2_{K_{\pm}}}=1} \Bigl{|} \sum_{N_2 \ll N_1} \sum_{N_3 \sim N_1} 
            \int_{\R^{1+d}} (Q_3 \til{n}_{N_3}) (Q_2 \om_1^{-1}\til{v}_{N_2} ) (\ol{Q_{\ge M}^{K_{\pm}} \til{u}_{N_1}}) dxdt\Bigr{|}^2   \notag \\
 &\lec \sum_{N_3 \gec 1} N_3^{2s_c}  \|n_{N_3}\|_{V^2_{W_{\pm c}}}^2 \|v\|_{Y^{s_c}_{K_{\pm}}}^2   \notag \\
 &\lec \|n\|_{\dot{Y}^{s_c}_{W_{\pm c}}}^2 \|v\|_{Y^{s_c}_{K_{\pm}}}^2.                                                      \label{Jj23}
} 
Collecting \eqref{Jj21}, \eqref{Jj22} and \eqref{Jj23}, we have 
\EQS{
 J_2 \lec \|n\|_{\dot{Y}^{s_c}_{W_{\pm c}}}^2 \|v\|_{Y^{s_c}_{K_{\pm}}}^2.    \label{j2}
} 
By Corollary \ref{U2A} and the triangle inequality to have 
\EQS{
 J_3 &\lec \sum_{N_1 \ge 1} N_1^{2s_c} \sup_{\|u\|_{V^2_{K_{\pm}}}=1} \Bigl{|}\sum_{N_2 \gg N_1} \sum_{N_3 \sim N_2} 
               \int_{\R^{1+d}} \til{n}_{N_3} (\om_1^{-1}\til{v}_{N_2})\ol{\til{u}_{N_1}} dxdt\Bigr{|}^2 \notag \\ 
      &\lec \sum_{N_1 \ge 1} N_1^{2s_c} \Bigl{(}\sum_{N_2 \gg N_1} \sum_{N_3 \sim N_2} \sup_{\|u\|_{V^2_{K_{\pm}}}=1} \Bigl{|}
               \int_{\R^{1+d}} \til{n}_{N_3} (\om_1^{-1}\til{v}_{N_2})\ol{\til{u}_{N_1}} dxdt\Bigr{|}\Bigr{)}^2.                   \label{j3a}
}
By the same manner as the estimate for Lemma \ref{tri} $(iii)$, we obtain 
\EQS{
 \Bigl{|}\int_{\R^{1+d}} \til{n}_{N_3} (\om_1^{-1}\til{v}_{N_2})\ol{\til{u}_{N_1}} dxdt\Bigr{|} 
    \lec N_3^{s_c} \|n_{N_3}\|_{V^2_{W_{\pm c}}} \|v_{N_2}\|_{V^2_{K_{\pm}}} \|u_{N_1}\|_{V^2_{K_{\pm}}}.   \label{j3d}
}
From \eqref{j3d}, the right-hand side of \eqref{j3a} is bounded by  
\EQQS{
 \sum_{N_1 \ge 1} \Bigl{(}\sum_{N_2 \gg N_1} \sum_{N_3 \sim N_2} N_1^{s_c} N_3^{s_c} \|n_{N_3}\|_{V^2_{W_{\pm c}}} 
       \|v_{N_2}\|_{V^2_{K_{\pm}}}\Bigr{)}^2. 
}
From $s_c > 0, \| \cdot \|_{l^2 l^1} \lec \| \cdot \|_{l^1 l^2}$ and the Cauchy-Schwarz inequality, we have 
\EQS{
 J_3^{1/2} &\lec \sum_{N_2 \gec 1} \sum_{N_3 \sim N_2} \Bigl{(}\sum_{N_1 \ll N_2} N_1^{2s_c} N_3^{2s_c} 
                        \|n_{N_3}\|_{V^2_{W_{\pm c}}}^2  \|v_{N_2}\|_{V^2_{K_{\pm}}}^2\Bigr{)}^{1/2} \notag \\
              &\lec \sum_{N_2 \gec 1} \sum_{N_3 \sim N_2} N_2^{s_c}N_3^{s_c}
                        \|n_{N_3}\|_{V^2_{W_{\pm c}}} \|v_{N_2}\|_{V^2_{K_{\pm}}} \notag \\ 
              &\lec \|n\|_{\dot{Y}^{s_c}_{W_{\pm c}}} \|v\|_{Y^{s_c}_{K_{\pm}}}.                   \label{j3}            
}
Collecting \eqref{j0}, \eqref{j1}, \eqref{j2} and \eqref{j3}, we obtain \eqref{BEKG}.  
We prove \eqref{BEW} below. 
By Corollary \ref{U2A}, we only need to estimate $K_i \ (i=1,2,3)$:  
\EQQS{
 &K_1 := \sum_{N_3} N_3^{2s_c} \sup_{\|n\|_{V^2_{W_{\pm c}}}=1} \Bigl{|}\sum_{N_2 \sim N_3} \sum_{N_1 \ll N_3} 
              \int_{\R^{1+d}} (\om_1^{-1}\til{u}_{N_1})(\ol{\om_1^{-1}\til{v}_{N_2}})(\ol{\om \til{n}_{N_3}})dxdt\Bigr{|}^2,  \\
 &K_2 := \sum_{N_3} N_3^{2s_c} \sup_{\|n\|_{V^2_{W_{\pm c}}}=1} \Bigl{|}\sum_{N_2 \ll N_3} \sum_{N_1 \sim N_3} 
              \int_{\R^{1+d}} (\om_1^{-1}\til{u}_{N_1})(\ol{\om_1^{-1}\til{v}_{N_2}})(\ol{\om \til{n}_{N_3}})dxdt\Bigr{|}^2,  \\
 &K_3 := \sum_{N_3} N_3^{2s_c} \sup_{\|n\|_{V^2_{W_{\pm c}}}=1} \Bigl{|}\sum_{N_2 \gec N_3} \sum_{N_1 \sim N_2} 
              \int_{\R^{1+d}} (\om_1^{-1}\til{u}_{N_1})(\ol{\om_1^{-1}\til{v}_{N_2}})(\ol{\om \til{n}_{N_3}})dxdt\Bigr{|}^2.
}
First, we estimate $K_1$. 
Put $K_1 = K_{1,1} + K_{1,2}$ where 
\EQS{
 &K_{1,1} := \sum_{N_3 \lec 1} N_3^{2s_c} \sup_{\|n\|_{V^2_{W_{\pm c}}}=1} \Bigl{|}\sum_{N_2 \sim N_3} \sum_{N_1 \ll N_3} 
                \int_{\R^{1+d}} (\om_1^{-1}\til{u}_{N_1})(\ol{\om_1^{-1}\til{v}_{N_2}}) \notag \\
 &\qquad \qquad \qquad \qquad \qquad \qquad \qquad \cross (\ol{\om \til{n}_{N_3}}) dxdt\Bigr{|}^2, 
                                                                                                              \label{K11} \\
 &K_{1,2} := \sum_{N_3 \gg 1} N_3^{2s_c} \sup_{\|n\|_{V^2_{W_{\pm c}}}=1} \Bigl{|}\sum_{N_2 \sim N_3} \sum_{N_1 \ll N_3} 
                \int_{\R^{1+d}} (\om_1^{-1}\til{u}_{N_1})(\ol{\om_1^{-1}\til{v}_{N_2}}) (\ol{\om \til{n}_{N_3}}) dxdt\Bigr{|}^2. 
                                                                                                                                           \notag   
}
By the same manner as the proof for Lemma \eqref{tri} $(i)$, we see  
\EQS{
 &\Bigl{|}\int_{\R^{1+d}} \Bigl{(}\sum_{N_1 \ll N_3}\om_1^{-1}\til{u}_{N_1}\Bigr{)} (\ol{\om_1^{-1}\til{v}_{N_2}})
            (\ol{\om \til{n}_{N_3}}) dxdt\Bigr{|} \notag \\
 &\lec \LR{N_2}^{-1/2}\LR{N_3}^{3/2}\|u\|_{Y^{s_c}_{K_{\pm}}} \|v_{N_2}\|_{V^2_{K_{\pm}}} \|n_{N_3}\|_{V^2_{W_{\pm c}}}.    \label{k1c} 
}  
Collecting \eqref{K11}, \eqref{k1c} and $N_2 \sim N_3 \lec 1$, we obtain 
\EQQS{
 K_{1,1} &\lec \sum_{N_2 \lec 1} N_2^{2s_c} (\|u\|_{Y^{s_c}_{K_{\pm}}} \LR{N_2}^{-1/2+3/2}\|v_{N_2}\|_{V^2_{K_{\pm}}})^2 \\
           &\lec \|u\|_{Y^{s_c}_{K_{\pm}}}^2 \sum_{N_2 \lec 1} N_2^{2s_c}\|v_{N_2}\|_{V^2_{K_{\pm}}}^2 \\
           &\lec \|u\|_{Y^{s_c}_{K_{\pm}}}^2 \|v\|_{Y^{s_c}_{K_{\pm}}}^2. 
}
For the estimate for $K_{1,2}$, we take $M=\e N_2$ for sufficiently small $\e >0$. 
Then, from Lemma \ref{Recovery}, we have
\EQQS{
 &P_{N_1} Q_{<M}^{K_{\pm}} \om_1^{-1} \bigl((Q_{<M}^{K_{\pm}} \om_1^{-1}\til{v}_{N_2})(Q_{<M}^{W_{\pm c}}{\om \, \til{n}_{N_3}})\bigr) \\
 &= P_{N_1} Q_{<M}^{K_{\pm}} \om_1^{-1} \Bigl[ \F^{-1}\Bigl( \int_{\ta_1 = \ta_2 + \ta_3,\, \xi_1 = \xi_2 + \xi_3} 
       \widehat{(Q_{<M}^{K_{\pm}} \om_1^{-1}\til{v}_{N_2})}(\ta_2, \xi_2)
       \widehat{(Q_{<M}^{W_{\pm c}} \om \, \til{n}_{N_3})}(\ta_3, \xi_3)  \Bigr) \Bigr] \\
 &=0
}
when $N_2 \gg \LR{N_1}$. 
Therefore, 
\EQQS{
 P_{N_1}\bigl( (\om_1^{-1}\til{v}_{N_2}) (\om \, \til{n}_{N_3})\bigr) = \sum_{i=1}^3 P_{N_1} G_i, 
}
where 
\EQQS{
 &G_1 :=  Q_{\ge M}^{K_{\pm}} \bigl((Q_2 \om_1^{-1}\til{v}_{N_2})(Q_3 \om \, \til{n}_{N_3})\bigr), \qquad 
   G_2 :=  Q_1 \bigl( (Q_{\ge M}^{K_{\pm}} \om_1^{-1}\til{v}_{N_2})(Q_3 \om \, \til{n}_{N_3}) \bigr), \notag \\
 &G_3 :=  Q_1 \bigl( (Q_2 \om_1^{-1}\til{v}_{N_2})(Q_{\ge M}^{W_{\pm c}} \om \, \til{n}_{N_3})\bigr).  
}
Here, $Q_1, Q_2 \in \{ Q_{<M}^{K_{\pm}}, Q_{\ge M}^{K_{\pm}} \}$ and $Q_3 \in \{ Q_{<M}^{W_{\pm c}}, Q_{\ge M}^{W_{\pm c}} \}$.  
Hence, it follows that 
\EQQS{
 K_{1,2} \le \sum_{i =1}^3 K_{1,2,i}
}
where  
\EQQS{
 K_{1,2,i} &:= \sum_{N_3 \gg 1} N_3^{2s_c} \sup_{\|n\|_{V^2_{W_{\pm c}}}=1} \Bigl{|}\sum_{N_2 \sim N_3}       
                        \sum_{N_1 \ll N_3} \int_{\R^{1+d}} (\om_1^{-1}\til{u}_{N_1}) \ol{G_i} dxdt\Bigr{|}^2, \qquad i=1,2,3. 
} 
By Lemma \ref{Mhigh}, we have 
\EQS{
 K_{1,2,1} &\lec  \sum_{N_3 \gg 1} N_3^{2s_c} \sup_{\|n\|_{V^2_{W_{\pm c}}}=1} \Bigl{|}\sum_{N_2 \sim N_3}
                        \sum_{N_1 \ll N_3} \int_{\R^{1+d}} (Q_{\ge M}^{K_{\pm}}\om_1^{-1}\til{u}_{N_1})(\ol{Q_2 \om_1^{-1}\til{v}_{N_2}}) 
                           \notag \\
            &\qquad \qquad       \cross (\ol{Q_3 \om \til{n}_{N_3}}) dxdt\Bigr{|}^2,                                       \label{k121} \\ 
 K_{1,2,2} &\lec  \sum_{N_3 \gg 1} N_3^{2s_c} \sup_{\|n\|_{V^2_{W_{\pm c}}}=1} \Bigl{|}\sum_{N_2 \sim N_3}
                        \sum_{N_1 \ll N_3} \int_{\R^{1+d}} (Q_1 \om_1^{-1}\til{u}_{N_1})(\ol{Q_{\ge M}^{K_{\pm}} \om_1^{-1}\til{v}_{N_2}}) 
                           \notag \\
            &\qquad \qquad       \cross (\ol{Q_3 \om \til{n}_{N_3}}) dxdt\Bigr{|}^2,                                       \label{k122} \\ 
 K_{1,2,3} &\lec  \sum_{N_3 \gg 1} N_3^{2s_c} \sup_{\|n\|_{V^2_{W_{\pm c}}}=1} \Bigl{|}\sum_{N_2 \sim N_3}
                        \sum_{N_1 \ll N_3} \int_{\R^{1+d}} (Q_1 \om_1^{-1}\til{u}_{N_1})(\ol{Q_2 \om_1^{-1}\til{v}_{N_2}}) 
                           \notag \\
            &\qquad \qquad       \cross (\ol{Q_{\ge M}^{W_{\pm c}} \om \til{n}_{N_3}}) dxdt\Bigr{|}^2.                                   \label{k123}  
}
By the same manner as the estimate for $F_2$, we apply Proposition \ref{bi-str}, $N_1 \ll N_2 \sim N_3, N_3 \gg 1$ and Proposition \ref{modulation}, then we obtain   
\EQS{
 &\Bigl{|} \int_{\R^{1+d}} (Q_{\ge M}^{K_{\pm}} \om_1^{-1} \til{u}_{N_1})(\ol{Q_2  \om_1^{-1} \til{v}_{N_2}})
     (\ol{Q_3 \om \til{n}_{N_3}})dxdt\Bigr{|} \notag \\ 
 &\lec \| Q_{\ge M}^{K_{\pm}} \om_1^{-1} \til{u}_{N_1}\|_{L^2_{t,x}} \| P_{N_1}\bigl{(} (\ol{Q_2  \om_1^{-1} \til{v}_{N_2}}) (\ol{Q_3 \om \til{n}_{N_3}})\bigr{)}\|_{L^2_{t,x}} \notag \\
 &\lec N_3^{-1/2} \LR{N_1}^{-1}\|u_{N_1}\|_{V^2_{K_{\pm}}} N_1^{(d-1)/2}(N_3/N_1)^{\e} \LR{N_2}^{-1}\|v_{N_2}\|_{V^2_{K_{\pm}}} N_3 \|n_{N_3}\|_{V^2_{W_{\pm c}}} \notag \\
 &\lec N_1^{s_c}(N_1/N_3)^{1/2-\e}\LR{N_2}^{-1}N_3\|u_{N_1}\|_{V^2_{K_{\pm}}}\|v_{N_2}\|_{V^2_{K_{\pm}}} \|n_{N_3}\|_{V^2_{W_{\pm c}}}. \label{k121c}
}
From \eqref{k121}, \eqref{k121c}, $N_3 \gg 1, N_2 \sim N_3$ and the Cauchy-Schwarz inequality, we have  
\EQQS{
 K_{1,2,1} 
  &\lec \sum_{N_2 \gg 1} N_2^{2s_c}\Bigl( \sum_{N_1 \ll N_2} N_1^{s_c}\|u_{N_1}\|_{V^2_{K_{\pm}}}(N_1/N_2)^{1/2-\e} \LR{N_2}^{-1}N_2\|v_{N_2}\|_{V^2_{K_{\pm}}}\Bigr)^2   \\
  &\lec \|u\|_{Y^{s_c}_{K_{\pm}}}^2 \|v\|_{Y^{s_c}_{K_{\pm}}}^2.
}
By Lemma \ref{tri} $(iv), i=5$, we obtain 
\EQS{
 &\Bigl{|} \int_{\R^{1+d}} \Bigl{(}\sum_{N_1 \ll N_3}Q_1 \om_1^{-1} \til{u}_{N_1}\Bigr{)}(\ol{Q_{\ge M}^{K_{\pm}} \om_1^{-1} \til{v}_{N_2}})
     (\ol{Q_3 \om \til{n}_{N_3}})dxdt\Bigr{|} \notag \\ 
 &\lec \LR{N_2}^{-1}N_3\|u\|_{Y^{s_c}_{K_{\pm}}}\|v_{N_2}\|_{V^2_{K_{\pm}}}\|n_{N_3}\|_{V^2_{W_{\pm c}}}.                              \label{k122c}
}
From \eqref{k122}, \eqref{k122c}, $N_3 \gg 1$ and $N_2 \sim N_3$, we have 
\EQQS{
 K_{1,2,2} 
  \lec \sum_{N_2 \gg 1} N_2^{2s_c}(\|u\|_{Y^{s_c}_{K_{\pm}}} \|v_{N_2}\|_{V^2_{K_{\pm}}})^2  
  \lec \|u\|_{Y^{s_c}_{K_{\pm}}}^2 \|v\|_{Y^{s_c}_{K_{\pm}}}^2.
}
By Lemma \ref{tri} $(iv), i=4$, we obtain 
\EQS{
 &\Bigl{|} \int_{\R^{1+d}} \Bigl{(}\sum_{N_1 \ll N_3}Q_1 \om_1^{-1} \til{u}_{N_1}\Bigr{)}(\ol{Q_2 \om_1^{-1} \til{v}_{N_2}})
     (\ol{Q_{\ge M}^{W_{\pm c}} \om \til{n}_{N_3}})dxdt\Bigr{|} \notag \\ 
 &\lec \LR{N_2}^{-1}N_3\|u\|_{Y^{s_c}_{K_{\pm}}}\|v_{N_2}\|_{V^2_{K_{\pm}}}\|n_{N_3}\|_{V^2_{W_{\pm c}}}.                              \label{k123c}
}
From \eqref{k123}, \eqref{k123c}, $N_3 \gg 1$ and $N_2 \sim N_3$, we have 
\EQQS{
 K_{1,2,3} 
  \lec \sum_{N_2 \gg 1} N_2^{2s_c}(\|u\|_{Y^{s_c}_{K_{\pm}}} \|v_{N_2}\|_{V^2_{K_{\pm}}})^2  
  \lec \|u\|_{Y^{s_c}_{K_{\pm}}}^2 \|v\|_{Y^{s_c}_{K_{\pm}}}^2.
}
By symmetry, the estimate for $K_2$ is obtained by the same manner as the estimate for $K_1$. Hence, we omit the estimate for $K_2$. 
By the triangle inequality, Lemma \ref{tri} $(i)$ and the Cauchy-Schwarz inequality, we have  
\EQQS{
 K_3^{1/2} &\lec \sum_{N_2} \sum_{N_1 \sim N_2}\Bigl{\{} \sum_{N_3 \lec N_2} N_3^{2s_c} \sup_{\|n\|_{V^2_{W_{\pm c}}}=1}
                  \Bigl{|}\int_{\R^{1+d}} (\om_1^{-1}\til{u}_{N_1})(\ol{\om_1^{-1}\til{v}_{N_2}})
                     (\ol{\om \til{n}_{N_3}})dxdt\Bigr{|}^2 \Bigr{\}}^{1/2} \\                                                                                                   
  &\lec \sum_{N_2} \sum_{N_1 \sim N_2} \Bigl{\{} \sum_{N_3 \lec N_2} N_3^{2s_c} (N_3^{s_c}
            \|u_{N_1}\|_{V^2_{K_{\pm}}} \|v_{N_2}\|_{V^2_{K_{\pm}}})^2 \Bigr{\}}^{1/2}  \\
  &\lec \sum_{N_2}\sum_{N_1 \sim N_2} N_1^{s_c} N_2^{s_c} \|u_{N_1}\|_{V^2_{K_{\pm}}}\|v_{N_2}\|_{V^2_{K_{\pm}}} \\
  &\lec \|u\|_{Y^{s_c}_{K_{\pm}}}\|v\|_{Y^{s_c}_{K_{\pm}}}.        
}
Therefore, we obtain \eqref{BEW}.     
\end{proof}
\section{The proof of the main theorem}
We define 
\EQQS{
 u_{\pm} := \om_1 u \pm i\p_t u, \quad  
 n_{\pm} := n \pm i (c\om)^{-1} \p_t n
}
where $\om_1 := (1-\laplacian )^{1/2}, \om := (-\laplacian )^{1/2}$.  
Then the wave equation in \eqref{KGZ} is rewritten into
\EQS{
 \begin{cases}
  i\p_t u_{\pm} \mp \om_1 u_{\pm} 
    = \pm (1/4)(n_+ + n_-)(\om_1^{-1}u_+ + \om_1^{-1}u_-), 
               \qquad (t,x) \in [-T,T] \cross \R^d, \\
  i\p_t n_{\pm} \mp c\om n_{\pm} 
    = \pm (4c)^{-1}\om | \om_1^{-1} u_+ + \om_1^{-1} u_-|^2, \qquad \qquad \quad (t,x) \in [-T,T] \cross \R^d, \\
  (u_{\pm}, n_{\pm})|_{t=0} = (u_{\pm 0}, n_{\pm 0}) 
                                \in H^s(\R^d) \cross \dot{H}^s(\R^d). 
                                                                                                                                            \label{KGZ''}
 \end{cases}  
}

Hence by the Duhamel principle, 
we consider the following integral equation corresponding to \eqref{KGZ''} on the time interval 
$[0, T)$ with $0< T \le \I:$
\EQ{ \label{int-eq}
  u_{\pm}=\Phi_1(u_{\pm},n_+,n_-), \quad  n_{\pm}=\Phi_2(n_{\pm},u_+,u_-), 
}
where
\EQQS{
  &\Phi_1(u_{\pm},n_+,n_-) := K_{\pm}(t)u_{\pm0} \pm (1/4)\{ I_{T,K_{\pm}}(n_+,u_+)(t) + I_{T,K_{\pm}}(n_+,u_-)(t) \\
                                 &\qquad \qquad \qquad \qquad  
                                      + I_{T,K_{\pm}}(n_-,u_+)(t) + I_{T,K_{\pm}}(n_-,u_-)(t) \}, \\
  &\Phi_2(n_{\pm},u_+,u_-) := W_{\pm c}(t)n_{\pm 0} \pm (4c)^{-1}\{ I_{T,W_{\pm c}}(u_+,u_+)(t) + I_{T,W_{\pm c}}(u_+,u_-)(t) \\
                                 &\qquad \qquad \qquad \qquad 
                                      + I_{T,W_{\pm c}}(u_-,u_+)(t) + I_{T,W_{\pm c}}(u_-,u_-)(t)\}.
}
\begin{prop}\label{main_prop1}
 (i) Let $d \ge 5, s = s_c = d/2-2$ and $\de > 0$ be sufficiently small.  
     For all $(u_{\pm0}, n_{\pm 0}) \in B_{\de }(H^s(\R^d) \cross \dot{H}^s(\R^d))$ and for all $0 < T < \I$,   
     there exists a unique solution of \eqref{int-eq} on $[0,T]$ such that 
\EQQ{
 (u_{\pm}, n_{\pm}) \in Y^s_{K_{\pm}}([0, T]) \cross \dot{Y}^s_{W_{\pm c}}([0, T])
      \subset C([0, T]; H^s(\R^d)) \cross C([0, T]; \dot{H}^s(\R^d)). 
} 
 (ii) The flow map obtained by (i):\\
    $B_{\de }(H^s(\R^d)) \cross B_{\de }(\dot{H}^s(\R^d)) 
 \ni (u_{\pm 0}, n_{\pm 0}) \mapsto
           (u_{\pm}, n_{\pm}) \in Y^s_{K_{\pm}}([0, T]) \cross \dot{Y}^s_{W_{\pm c}}([0, T])$
    is Lipschitz continuous.  
\end{prop}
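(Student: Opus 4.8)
The plan is to solve the coupled integral system \eqref{int-eq} by the Banach fixed point theorem in the product space
\EQQS{
 X_T := \prod_{\pm} Y^s_{K_{\pm}}([0,T)) \, \cross \, \prod_{\pm} \dot{Y}^s_{W_{\pm c}}([0,T)),
}
endowed with the norm given by the sum of the four component norms. I would run the iteration on the closed ball $B_R \subset X_T$ of radius $R$ comparable to $\de$ and show that the map $\Phi := (\Phi_1, \Phi_2)$ is a contraction there. The decisive structural point is Remark \ref{remconstbe}: the constants produced by Proposition \ref{BE} are independent of $T$, so the whole argument is uniform in $T$, and it is precisely this uniformity that upgrades solvability on each finite interval to the global statement (and later to scattering).

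First I would bound the homogeneous parts. Since $K_{\pm}(\cdot)u_{\pm0}$ is a single $U^2_{K_{\pm}}$-atom (constant in $t$), its $Z^s_{K_{\pm}}$ norm equals $\|u_{\pm0}\|_{H^s}$, and likewise $\|W_{\pm c}(\cdot)n_{\pm0}\|_{\dot{Z}^s_{W_{\pm c}}} = \|n_{\pm0}\|_{\dot{H}^s}$, so the linear terms contribute $\lec \de$. For the Duhamel terms I would invoke Proposition \ref{BE} together with the embeddings $Z^s_{K_{\pm}} \subset Y^s_{K_{\pm}}$ and $\dot{Z}^s_{W_{\pm c}} \subset \dot{Y}^s_{W_{\pm c}}$, which come from $U^2 \subset V^2_{-,\,rc}$ in Proposition \ref{embedding}. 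Because each nonlinear term is bilinear, \eqref{BEKG} and \eqref{BEW} produce a bound of the form $\lec R^2$, whence $\|\Phi(w)\|_{X_T} \le C_0\de + C_1 R^2$. Choosing $R = 2C_0\de$ and taking $\de$ small enough that $C_1 R \le 1/2$ forces $\Phi$ to map $B_R$ into itself.

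Next I would establish the contraction by writing each quadratic term as a telescoping difference, e.g.
\EQQS{
 I_{T,K_{\pm}}(n,u) - I_{T,K_{\pm}}(n',u') = I_{T,K_{\pm}}(n-n',u) + I_{T,K_{\pm}}(n',u-u'),
}
and applying \eqref{BEKG} and \eqref{BEW} to each piece. This yields $\|\Phi(w)-\Phi(w')\|_{X_T} \lec R\,\|w-w'\|_{X_T}$, a constant $<1$ for $R$ (hence $\de$) small, so $\Phi$ has a unique fixed point in $B_R$, which is the desired solution; uniqueness in the ambient space, rather than merely in the ball, follows from Proposition \ref{unique} by a short-interval continuation argument. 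The inclusions $Y^s_{K_{\pm}}([0,T)) \subset C([0,T);H^s)$ and $\dot{Y}^s_{W_{\pm c}}([0,T)) \subset C([0,T);\dot{H}^s)$ then give the asserted regularity and time continuity, and the resulting solution extends continuously to $[0,T]$.

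For part (ii), given two data sets I would subtract the two fixed point identities: the difference of solutions solves a fixed point equation whose inhomogeneous term is the difference of free evolutions (controlled by the $H^s\cross\dot{H}^s$ distance of the data) and whose nonlinear part obeys the same contraction bound; absorbing the contractive part into the left-hand side yields Lipschitz continuity of the flow map. I do not expect a genuine obstacle here, since all of the analytic difficulty is already contained in Proposition \ref{BE}. The only point demanding care is the bookkeeping for the coupled system: $\Phi_1$ depends on $(n_+,n_-)$ and $\Phi_2$ on $(u_+,u_-)$, so the self-mapping and contraction estimates must be summed over all $\pm$ combinations and both components at once, with every implicit constant kept uniform in $T$ throughout.
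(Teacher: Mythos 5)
Your proposal is correct and follows essentially the same route as the paper: a Banach fixed point argument on a small ball in the product of $Y^s_{K_{\pm}}$ and $\dot{Y}^s_{W_{\pm c}}$ spaces, with the self-mapping and contraction estimates supplied by Proposition \ref{BE} and the $T$-independence of its constants (Remark \ref{remconstbe}), and with uniqueness in the full space obtained by the same short-interval continuation argument via Proposition \ref{unique}. The paper likewise treats the Lipschitz dependence of the flow map as standard and omits it.
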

\begin{rem} \label{time_rev}
Due to the time reversibility of the Klein-Gordon-Zakharov equation, 
Porpositions \ref{main_prop1} also holds in corresponding time interval $[-T,0]$
\end{rem} 
\begin{rem} \label{gsol}
By $(i)$ in Proposition \ref{main_prop1} and Remark \ref{time_rev}, for any $T>0$, we have solutions to 
\eqref{int-eq} $(u_{\pm}(t),n_{\pm}(t))$ on $[0,T]$ and $[-T,0]$. 
If initial data $(u_{\pm0}, n_{\pm 0}) \in B_{\de }(H^s(\R^d) \cross \dot{H}^s(\R^d))$, then we can take $T$ arbitrary large and by uniqueness, $(u_{\pm}(t),n_{\pm}(t)) \in C((-\I,\I);H^s(\R^d)) \cross C((-\I,\I);\dot{H}^s(\R^d))$ can be defined uniquely. 
\end{rem} 
\begin{prop} \label{main_prop2}
Let the solution $(u_{\pm}(t), n_{\pm}(t))$ to \eqref{int-eq} on $(-\I,\I)$ obtained by Proposirion \ref{main_prop1}, Remark \ref{time_rev} and Remark \ref{gsol} with initial data $(u_{\pm 0}, n_{\pm 0}) \in B_{\de}(H^s(\R^d) \cross \dot{H}^s(\R^d))$. 
Then, there exist $(u_{\pm, +\I}, n_{\pm, +\I})$ and $(u_{\pm, -\I}, n_{\pm, -\I})$ in $H^s(\R^d) \cross \dot{H}^s(\R^d)$ 
such that 
\EQQS{
 &\lim_{t \to +\I}(\|u_{\pm}(t)-K_{\pm}(t)u_{\pm, +\I}\|_{H^s_x(\R^d)} + \|n_{\pm}(t)-W_{\pm c}(t)n_{\pm, +\I}\|_{\dot{H}^s_x(\R^d)}) = 0, \\
 &\lim_{t \to -\I}(\|u_{\pm}(t)-K_{\pm}(t)u_{\pm, -\I}\|_{H^s_x(\R^d)} + \|n_{\pm}(t)-W_{\pm c}(t)n_{\pm, -\I}\|_{\dot{H}^s_x(\R^d)}) = 0.
}   
\end{prop}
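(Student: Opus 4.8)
The plan is to untwist the solution by the free propagators and read off the asymptotic states from the limits of the Duhamel terms, which the bilinear estimate places in atomic spaces whose untwisted elements are $U^2$ functions. First I would invoke Proposition \ref{main_prop1} together with Remarks \ref{time_rev} and \ref{gsol}: the global solution has finite global norms $\|u_{\pm}\|_{Y^s_{K_{\pm}}}$ and $\|n_{\pm}\|_{\dot{Y}^s_{W_{\pm c}}}$ and satisfies \eqref{int-eq} with $T=\I$. Writing the nonlinear parts
\[
 u_{\pm}^{\sharp}(t) := u_{\pm}(t)-K_{\pm}(t)u_{\pm 0}, \qquad n_{\pm}^{\sharp}(t) := n_{\pm}(t)-W_{\pm c}(t)n_{\pm 0},
\]
each is the finite sum of Duhamel terms $I_{\I,K_{\pm}}(\cdot,\cdot)$ resp. $I_{\I,W_{\pm c}}(\cdot,\cdot)$ appearing in $\Phi_1,\Phi_2$. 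Since $K_{\pm}(t)$ (resp. $W_{\pm c}(t)$) is a unitary Fourier multiplier on $H^s_x$ (resp. $\dot{H}^s_x$), the scattering claim is equivalent to the convergence of $K_{\pm}(-t)u_{\pm}(t)$ and $W_{\pm c}(-t)n_{\pm}(t)$ in $H^s_x$ resp. $\dot{H}^s_x$ as $t\to\pm\I$; the limits, minus the free data, are then the desired asymptotic states.

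For the convergence I would use Proposition \ref{BE} (whose constant is $T$-independent, Remark \ref{remconstbe}): $u_{\pm}^{\sharp}\in Z^s_{K_{\pm}}$ and $n_{\pm}^{\sharp}\in\dot{Z}^s_{W_{\pm c}}$. Unwinding Definition \ref{defX}, each dyadic piece $K_{\pm}(-\cdot)P_N u_{\pm}^{\sharp}$ lies in $U^2$, and by Remark \ref{embed} one has $U^2\subset V^2_{-,rc}$, so by the definition of $V^2$ the limit $\lim_{t\to\pm\I}K_{\pm}(-t)P_N u_{\pm}^{\sharp}(t)$ exists in $L^2_x$ with norm $\lec\|P_N u_{\pm}^{\sharp}\|_{U^2_{K_{\pm}}}$. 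Since $\sum_N N^{2s}\|P_N u_{\pm}^{\sharp}\|_{U^2_{K_{\pm}}}^2<\I$, assembling these limits through the weighted $\ell^2_N$ square function and passing to the limit by dominated convergence shows that $K_{\pm}(-t)u_{\pm}^{\sharp}(t)$ converges in $H^s_x$ as $t\to\pm\I$; the same argument with $W_{\pm c}$ gives convergence of $W_{\pm c}(-t)n_{\pm}^{\sharp}(t)$ in $\dot{H}^s_x$. I would then set
\[
 u_{\pm,\pm\I}:=u_{\pm 0}+\lim_{t\to\pm\I}K_{\pm}(-t)u_{\pm}^{\sharp}(t), \qquad n_{\pm,\pm\I}:=n_{\pm 0}+\lim_{t\to\pm\I}W_{\pm c}(-t)n_{\pm}^{\sharp}(t),
\]
which lie in $H^s(\R^d)\cross\dot{H}^s(\R^d)$.

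The conclusion is then immediate: by unitarity,
\[
 \|u_{\pm}(t)-K_{\pm}(t)u_{\pm,\pm\I}\|_{H^s_x}=\Bigl\|K_{\pm}(-t)u_{\pm}^{\sharp}(t)-\lim_{\tau\to\pm\I}K_{\pm}(-\tau)u_{\pm}^{\sharp}(\tau)\Bigr\|_{H^s_x}\to 0,
\]
and similarly $\|n_{\pm}(t)-W_{\pm c}(t)n_{\pm,\pm\I}\|_{\dot{H}^s_x}\to 0$, which is the assertion. The step needing the most care is the identification in the previous paragraph: one must check that the Duhamel term is a genuine element of $\LR{\na_x}^{-s}U^2_{K_{\pm}}$ (so that the $V^2$ limit property applies) and not merely a limit of such elements in the $Z^s$-closure. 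This is guaranteed because each $I_{\I,K_{\pm}}(n,v)$ is absolutely continuous on compact intervals with $K_{\pm}(-\cdot)\LR{\na_x}^s I_{\I,K_{\pm}}(n,v)\in V^1_{-,rc}$, so that Corollary \ref{U2A} realizes it as an honest $U^2_{K_{\pm}}$ function; everything else follows from the bilinear estimate and the structural properties of $U^2,V^2$ recorded in Section 2.
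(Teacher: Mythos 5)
Your underlying mechanism is the same one the paper uses: asymptotic states are extracted from the fact that a function of finite $V^2$ norm has limits in $L^2_x$ as $t\to\pm\I$ (Proposition \ref{embedding}(i) and the definition of $V^2$). The difference is the object to which you apply it. The paper applies it directly to the twisted solution $\LR{\na_x}^s K_{\pm}(-t)u_{\pm}(t)$: since the contraction radius $r$ in Proposition \ref{main_prop1} is independent of $T$, the bound $\|u_{\pm}\|_{Y^s_{K_{\pm}}([0,T])}+\|u_{\pm}\|_{Y^s_{K_{\pm}}([-T,0])}<2M$ holds uniformly in $T$, so every finite partition sum is controlled, the supremum over $\mathcal{Z}_0$ is finite, and the limits at both $+\I$ and $-\I$ follow in one stroke without ever forming a $T=\I$ Duhamel term. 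You instead route through the Duhamel representation and the $U^2$ membership of the nonlinear part via Proposition \ref{BE}, and this leaves a genuine gap: the integral equation \eqref{int-eq}, with $I_{T,K_{\pm}}$ defined through the cutoff $\1_{[0,T]}(t')$ and the integral $\int_0^t$, represents the solution only for $t\ge 0$. For $t<0$ these Duhamel terms vanish identically, while the solution on $[-T,0]$ comes from the time-reversed problem (Remark \ref{time_rev}) and is not the free evolution there. So your identity $u_{\pm}^{\sharp}(t)=\sum I_{\I,K_{\pm}}(\cdot,\cdot)(t)$ fails for negative $t$, and your argument as written proves only the $t\to+\I$ half of the proposition; the consistency of your decomposition with $V^2_{-,rc}$ (whose elements tend to $0$ at $-\I$) is precisely the signal that it carries no information at $-\I$. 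The fix is a symmetric repetition with the reversed Duhamel operators, but it must be stated.

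A second, more minor point: you invoke Proposition \ref{BE} at $T=\I$. It is stated for $0<T<\I$ with a $T$-independent constant (Remark \ref{remconstbe}), but concluding that the Duhamel term with cutoff $\1_{[0,\I)}$ actually lies in $Z^s_{K_{\pm}}$ still requires a limiting argument in $T$ (e.g. showing the family is Cauchy in $Z^s$ as $T\to\I$, which needs the estimate on intervals $[T_1,T_2]$). This is routine but not free, and it is exactly what the paper's more direct route avoids. Your final assembly step --- dyadic limits, the weighted $\ell^2_N$ tail estimate, and $U^2\subset L^{\I}_t L^2_x$ --- is fine, as is the reduction by unitarity of $K_{\pm}$ and $W_{\pm c}$.
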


\begin{proof}[proof of Proposition \ref{main_prop1}]
First, we prove $(i)$. 
By Proposition \ref{Str}, there exists $C>0$ such that   
\EQQS{
 \|K_{\pm}(t)u_{\pm0}\|_{Y^s_{K_{\pm}}} \le C\|u_{\pm0}\|_{H^s}, \qquad 
 \|W_{\pm c}(t)n_{\pm 0}\|_{\dot{Y}^s_{W_{\pm c}}} \le C\|n_{\pm 0}\|_{\dot{H}^s}.
} 
We denote time interval $I := [0, T]$. 
If $(u_{\pm 0}, n_{\pm 0}) \in B_{\de}(H^s(\R^d) \cross \dot{H}^s(\R^d))$ is small and $(u_{\pm}, n_{\pm}) \in B_r(Y^s_{K_{\pm}}(I) \cross \dot{Y}^s_{W_{\pm c}}(I)), s = d/2-2$, then by Proposition \ref{BE} and Remark \ref{remconstbe}, we have 
\EQQS{
 &\|\Phi_1(u_{\pm}, n_+,n_-)\|_{Y^s_{K_{\pm}}(I)} \\ 
  &\le C\de + (C/4)(\|n_+\|_{\dot{Y}^s_{W_{+c}}(I)}\|u_+\|_{Y^s_{K_+}(I)} 
         + \|n_+\|_{\dot{Y}^s_{W_{+c}}(I)}\|u_-\|_{Y^s_{K_-}(I)} \\
  &\qquad 
         + \|n_-\|_{\dot{Y}^s_{W_{-c}}(I)}\|u_+\|_{Y^s_{K_+}(I)} 
         + \|n_-\|_{\dot{Y}^s_{W_{-c}}(I)}\|u_-\|_{Y^s_{K_-}(I)}), \\
 &\|\Phi_2(n_{\pm},u_+,u_-)\|_{\dot{Y}^s_{W_{\pm c}}(I)} \\
  &\le C\de + (C/4c)(\|u_+\|^2_{Y^s_{K_+}(I)} + 2\|u_+\|_{Y^s_{K_+}(I)}\|u_-\|_{Y^s_{K_-}(I)} 
         + \|u_-\|_{Y^s_{K_-}(I)}^2). 
} 
Taking $\de = r^2$ and $r=\min\{1,c\}/(4C)$, then we have 
\EQQS{
 \|\Phi_1(u_{\pm}, n_+,n_-)\|_{Y^s_{K_{\pm}}(I)} \le r, \qquad 
 \|\Phi_2(n_{\pm},u_+,u_-)\|_{\dot{Y}^s_{W_{\pm c}}(I)} \le r. 
} 
Hence, $(\Phi_1, \Phi_2)$ is a map from $B_r(Y^s_{K_{\pm}}([0,T]) \cross \dot{Y}^s_{W_{\pm c}}([0,T]))$ into itself.  
If we also assume $(v_{\pm}, m_{\pm}) \in B_r(Y^s_{K_{\pm}}(I) \cross \dot{Y}^s_{W_{\pm c}}(I))$, then we have 
\EQS{
 &\|\Phi_1(u_{\pm}, n_+, n_-)-\Phi_1(v_{\pm}, m_+,m_-)\|_{Y^s_{K_{\pm}}(I)} \notag \\
 &\le (1/8)(\|u_+ - v_+\|_{Y^s_{K_+}(I)} + \|u_- - v_-\|_{Y^s_{K_-}(I)}  \notag   \\
   &\qquad + \|n_+ - m_+\|_{\dot{Y}^s_{W_{+c}}(I)} + \|n_- - m_-\|_{\dot{Y}^s_{W_{-c}}(I)}), \label{kg-contraction} \\
 &\|\Phi_2 (n_{\pm},u_+,u-) - \Phi_2 (m_{\pm}, v_+, v_-)\|_{\dot{Y}^s_{W_{\pm c}}(I)}  \notag  \\
  &\le (1/4)(\|u_+ - v_+\|_{Y^s_{K_+}(I)} + \|u_- - v_-\|_{Y^s_{K_-}(I)}).  \label{w-contraction}
}
Thus, $(\Phi_1, \Phi_2)$ is a contraction mapping on $B_r(Y^s_{K_{\pm}}([0,T]) \cross \dot{Y}^s_{W_{\pm c}}([0,T]))$.  
Hence, by the Banach fixed point theorem, we have a solution to \eqref{int-eq} in it.   
We assume that $(u_{\pm}(0), n_{\pm}(0)), (v_{\pm}(0), m_{\pm}(0))$ are both small and $s = d/2-2$ for $d \ge 5$.  
Let $(u_{\pm}, n_{\pm}), (v_{\pm}, m_{\pm}) \in Y^s_{K_{\pm}}([0,T]) \cross \dot{Y}^s_{W_{\pm c}}([0,T])$ are two solutions satisfying $(u_{\pm}(0), n_{\pm}(0))=(v_{\pm}(0), m_{\pm}(0))$. Moreover, 
\EQQS{
 T' := \sup \{0 \le t \le T \, ; u_{\pm}(t)=v_{\pm}(t), n_{\pm}(t)=m_{\pm}(t) \} <T. 
}
By a translation in $t$, it suffices to consider $T'=0$. Let $0 < \ta \le T$ be fixed later. From \eqref{kg-contraction}--\eqref{w-contraction} and Proposition \ref{unique}, we obtain 
\EQS{ 
 &\|u_{\pm} - v_{\pm}\|_{Y^s_{K_{\pm}}([0,\ta ])}  \notag \\
  &\le (1/7)(\|n_+ -m_+\|_{\dot{Y}^s_{W_{+c}}([0,\ta ])} + \|n_- - m_-\|_{\dot{Y}^s_{W_{-c}}([0,\ta ])} 
                + \|u_{\mp} - v_{\mp}\|_{Y^s_{K_{\mp}}([0,\ta ])}),      \label{u-v}  \\
 &\|n_{\pm} - m_{\pm}\|_{\dot{Y}^s_{W_{\pm c}}([0,\ta ])} 
   \le (1/4)(\|u_+ - v_+\|_{Y^s_{K_+}([0,\ta ])} + \|u_- - v_-\|_{Y^s_{K_-}([0,\ta ])}).      \label{n-m}
}
From \eqref{u-v} and \eqref{n-m}, we obtain 
\EQQS{
 u_{\pm}=v_{\pm}, \quad n_{\pm} = m_{\pm}
}
on $[0,\ta ]$ if $0 < \ta \le T$ be sufficiently small. This contradicts the definition of $T'$. Therefore, the uniqueness of the solution $(u_{\pm},n_{\pm})$ is showed. $(ii)$ follows from the standard argument, so we omit the proof. 
\end{proof}

Finally, we prove Proposition \ref{main_prop2}. The proof is the same manner as the proof for Proposition 4.2 in ~\cite{KaT}. 
\begin{proof}
There exists $M > 0$ such that for all $0 < T < \I$, 
\EQQS{
 &\|u_{\pm}\|_{Y^s_{K_{\pm}}([0,T])} + \|n_{\pm}\|_{\dot{Y}^s_{W_{\pm c}}([0,T])} < M, \\
 &\|u_{\pm}\|_{Y^s_{K_{\pm}}([-T,0])} + \|n_{\pm}\|_{\dot{Y}^s_{W_{\pm c}}([-T,0])} < M 
} 
holds since $r$ in the proof of Proposition \ref{main_prop1} does not depend on $T$. 
Take $\{t_k\}_{k=0}^K \in \mathcal{Z}_0$ and $0< T < \I$ such that $-T < t_0, t_K < T$. 
By $L^2_x$ orthogonality, 
\EQQS{
 &\Bigl( \sum_{k=1}^K \| \LR{\na_x}^s \bigl( K_{\pm}(-t_k)u_{\pm}(t_k) - 
        K_{\pm}(-t_{k-1})u_{\pm}(t_{k-1})\bigr)\|_{L^2_x}^2\Bigr)^{1/2} \\
 &\lec \| \LR{\na_x}^s u_{\pm}\|_{V^2_{K_{\pm}}([0,T])} + \| \LR{\na_x}^s u_{\pm}\|_{V^2_{K_{\pm}}([-T,0])} \\
 &\lec \|u_{\pm}\|_{Y^s_{K_{\pm}}([0,T])} + \|u_{\pm}\|_{Y^s_{K_{\pm}}([-T,0])} \\
 &< 2M.   
} 
Thus, 
\EQQS{
 \sup_{\{t_k\}_{k=0}^K \in \mathcal{Z}_0} \Bigl( \sum_{k=1}^K \| \LR{\na_x}^s K_{\pm}(-t_k)u_{\pm}(t_k) - 
        \LR{\na_x}^s K_{\pm}(-t_{k-1})u_{\pm}(t_{k-1}) \|_{L^2_x}^2\Bigr)^{1/2} 
 \lec M.
}
Hence, there exists $f_{\pm} := \lim_{t \to \pm \I} \LR{\na_x}^s K_{\pm}(-t)u_{\pm}(t)$ in $L^2_x(\R^d)$. 
Then put $u_{\pm \I} := \LR{\na_x}^{-s}f_{\pm}$, we obtain 
\EQQS{
 \| \LR{\na_x}^s K_{\pm}(-t)u_{\pm}(t)-f_{\pm}\|_{L^2_x} 
  = \| u_{\pm}(t) - K_{\pm}(t)u_{\pm \I}\|_{H^s_x} 
  \to 0  
} 
as $t \to \pm \I$.
The scattering result for the wave equation is obtained similarly.  

\end{proof}

\end{document}